\documentclass[11pt,a4paper]{article}
\usepackage[T1]{fontenc}
\usepackage[utf8]{inputenc}
\usepackage{lmodern}
\usepackage[english]{babel}
\usepackage[margin=1in]{geometry}
\usepackage{amsmath,amssymb,amsthm,mathtools,tikz-cd}
\usepackage{microtype,enumitem,needspace}
\usepackage[hidelinks]{hyperref}
\newtheorem{theorem}{Theorem}[section]
\newtheorem{proposition}[theorem]{Proposition}
\newtheorem{lemma}[theorem]{Lemma}
\newtheorem{corollary}[theorem]{Corollary}
\theoremstyle{definition}

\newtheorem{remark}[theorem]{Remark}
\newcommand{\Q}{\mathbb Q}\newcommand{\Z}{\mathbb Z}\newcommand{\F}{\mathbb F}
\newcommand{\A}{\mathbb A_f}\newcommand{\LL}{\widehat{\mathbb L}}
\newcommand{\limi}{\varprojlim}\newcommand{\limone}{\varprojlim\nolimits^{1}}

\DeclareMathOperator{\im}{im}\DeclareMathOperator{\coker}{coker}
\DeclareMathOperator{\Hom}{Hom}\DeclareMathOperator{\holim}{holim}
\DeclareMathOperator{\Lie}{Lie}
\DeclareMathOperator{\cdp}{cd_p}
\title{Adelic Asphericity and One-Relator Pro-\(p\) Groups}
\author{Andrey Mikhovich}\date{}
\hypersetup{pdfauthor={Andrey Mikhovich},pdftitle={Adelic Asphericity and One-Relator Pro-p Groups}}
\begin{document}
\maketitle
\begin{abstract}
We give a direct Fox-calculus proof of the known rational asphericity theorem for every finite presentation complex with one nontrivial relator, including proper powers. The proof uses completed Fox derivatives and the absence of zero divisors in the completed algebra of a prounipotent group. Exact scalar extension to the finite adeles then yields adelic asphericity and a product decomposition of the higher homotopy groups of integral Bousfield--Kan completion. We also prove rational and pro-\(p\) analogues of Mannan's formula for finite two-complexes, with their natural topologies and fundamental group actions. The rational cohomological Whitehead criterion is proved directly. For a discrete one-relator group with relator not a proper power, we show that the tower \(L_2\nu_n(G)\) is Mittag--Leffler. Consequently \(\limone_nL_2\nu_n(G)=0\), and \(\pi_2(\Z_\infty BG)\) is naturally the inverse limit of the nilpotent Baer invariants. We identify this Baer limit canonically and topologically with the kernel of the integral augmentation-completed Fox operator and with the product of its local pro-\(p\) kernels. The comparison yields a torsion-free profinite homotopy module, vanishing inverse limits of the finite-stage transgression kernels, and explicit lifting obstructions. These results provide a homotopy-theoretic framework for the question of Serre in Gildenhuys's corrected formulation.
\end{abstract}
\noindent\textbf{Keywords:} Bousfield--Kan completion, finite adeles, one-relator pro-\(p\) groups, Baer invariants, derived limits.

\section{Introduction and main results}
Serre's question on one-relator pro-\(p\) groups, as corrected by Gildenhuys \cite{serre,gildenhuys}, asks whether a group of cohomological dimension greater than two must admit a presentation \(F^{(p)}/(u^p)\). Here \(F^{(p)}\) is a finite-rank free pro-\(p\) group and \((u^p)\) is the closed normal subgroup generated by \(u^p\). The correction concerns the choice of normal generator: a prescribed relator need not itself be a \(p\)th power. We study the related problem of detecting higher homotopy in completed presentation complexes.

Throughout, \((r)\) means the normal subgroup generated by \(r\), with closure understood in the pro-\(p\) category. We use \(x^y=y^{-1}xy\) and \([x,y]=x^{-1}y^{-1}xy\). Let
\[
K=K(x_1,\ldots,x_d\mid r),\qquad 1\ne r\in F_d,
\]
where \(F_d\) is discrete and free, and \(K\) is the standard two-dimensional presentation complex. Write
\[
Z_K=\Z_\infty K,\qquad Q_K=\Q_\infty K,\qquad K_p^\wedge=(\F_p)_\infty K.
\]
These are completions of \emph{spaces} \cite{bk}. The last is modeled by degreewise pro-\(p\) completion of a free simplicial loop model. It is not identified in advance with the classifying space of the pro-\(p\) completion of \(\pi_1K\). The adelic space \(A_K\) is defined in Section~\ref{adelic}.

\begin{theorem}\label{main}
For every nontrivial relator \(r\), including a proper power,
\[
\pi_j(Q_K)=\pi_j(A_K)=0\qquad(j\ge2),
\]
and the natural maps induce isomorphisms of abelian groups
\begin{equation}\label{product}
\pi_j(Z_K)\cong\prod_p\pi_j(K_p^\wedge),\qquad j\ge2.
\end{equation}
\end{theorem}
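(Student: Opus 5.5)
The plan is to work with a free simplicial group model \(G_\bullet\) for the loop space of \(K\). Since \(K\) is a two-dimensional presentation complex, \(G_\bullet\) can be taken finitely generated in each degree, with \(G_0=F_d\) and a single nondegenerate generator in degree one mapping to \(r\). Bousfield--Kan completion \(R_\infty K\) is then modeled degreewise by the Malcev/\(R\)-completion of \(G_\bullet\), and \(\pi_j(R_\infty K)=\pi_{j-1}\) of that simplicial group. For \(R=\Q\), the degreewise prounipotent completions are controlled by the completed group algebras \(\Q[[F_d]]\), a completed free associative algebra on \(x_i-1\). In this setting \(\pi_2(Q_K)\) is identified with the kernel of the completed Fox operator
\[
\Q[[U]]\xrightarrow{\;\partial\;}\Q[[U]]^{d},\qquad \lambda\mapsto\Bigl(\lambda\,\overline{\partial r/\partial x_i}\Bigr)_i .
\]
Here \(U\) is the prounipotent completion of \(\pi_1K=F_d/(r)\), and \(\overline{\cdot}\) is the image of the Fox derivative in \(\Q[[U]]\). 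This is the relation-module description: the completed cellular chain complex of the universal cover \(0\to\Q[[U]]\to\Q[[U]]^d\to\Q[[U]]\to\Q\to 0\).

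The first step is rational asphericity. I will show that \(\partial\) is injective and that higher \(\pi_j\) vanish.

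For injectivity, use the fundamental formula \(\sum_i(\partial r/\partial x_i)(x_i-1)=r-1\). At least one \(\overline{\partial r/\partial x_i}\) is nonzero in \(\Q[[U]]\), even when \(r=u^m\) is a proper power. In that case \(u\) has infinite order in the prounipotent world and becomes trivial in \(U\) only if \(u\in(r)\) rationally, which forces \(U\) to be the completion of \(F_d/(u)\). One then checks that \(\partial(u^m)=(1+u+\dots+u^{m-1})\partial u\), and that \(1+\dots+u^{m-1}\mapsto m\neq0\) is a unit rationally, so the torsion is invisible over \(\Q\). Because \(\Q[[U]]\) has no zero divisors (the result cited in the abstract), left multiplication by a nonzero element is injective, so \(\pi_2(Q_K)=\ker\partial=0\).

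For \(j\ge3\), note that a two-dimensional complex whose completed chain complex is exact has \(Q_K\simeq B U\) with \(\mathrm{cd}\,U\le 2\). Equivalently, the Curtis/lower central series spectral sequence of the completed simplicial group has \(E^1\) concentrated so that it forces vanishing above degree 2. I expect this passage from ``\(\pi_2=0\)'' to full asphericity of \(Q_K\) to be the main obstacle. The reason is that completion need not commute with taking the universal cover. I would handle it by showing that \(Q_K\to B U\) induces an isomorphism on rational (continuous) cohomology, using the exact completed Fox resolution, and then invoking nilpotent Whitehead for \(\Q\)-completions.

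Next comes the adelic and product statements. \(A_K\) is built from \(\prod_p K_p^\wedge\) and \(Q_K\) via the arithmetic fracture square for \(Z_K\):
\[
\begin{tikzcd} Z_K\arrow[r]\arrow[d] & \prod_p K_p^\wedge\arrow[d]\\ Q_K\arrow[r] & A_K \end{tikzcd}
\]
This square is a homotopy pullback for the nilpotent-tower pieces, and passing to the limit gives the statement for \(Z_K\). Scalar extension \(\A=\widehat\Z\otimes\Q\) is exact, so tensoring the rational Fox computation gives injectivity of the Fox operator over \(\A[[U]]\). Hence \(\pi_j(A_K)=0\) for \(j\ge2\), by the same argument with \(\A\) in place of \(\Q\). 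Feeding this together with \(\pi_j(Q_K)=0\) into the Mayer--Vietoris long exact sequence of homotopy of the fracture pullback leaves only the term \(\prod_p\pi_j(K_p^\wedge)\) in degrees \(\ge2\). One still has to check the boundary degree \(j=2\), where the \(\pi_1\) terms contribute. There I need \(\pi_1 Z_K\to\pi_1Q_K\times\prod\pi_1K_p^\wedge\to\pi_1A_K\) to be injective on the relevant part, which follows from the pullback description of the integral completion of \(\pi_1K\). This yields \eqref{product}, with the maps being the natural ones.
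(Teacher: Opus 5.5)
Your outline matches the paper's: Fox kernel plus absence of zero divisors for rational $\pi_2$, a cohomological argument for higher rational homotopy, transfer to $A_K$, then Mayer--Vietoris for the arithmetic square. The adelic step, however, does not go through as stated. ``The same argument with $\A$ in place of $\Q$'' has nothing to run on. $A_K$ is defined stagewise by restricted products $\prod_p'(U_{s,\bullet}(\Q_p),\widehat N_{s,\bullet,p})$, taken before the inverse limit. $\A$ is not a field (it has idempotents). Over $\A$ you have neither an identification of $\pi_2(A_K)$ with a Fox kernel over your undefined ``$\A[[U]]$'' nor any Whitehead-type passage to $j\ge3$. Exactness of scalar extension applied to the Fox map only gives injectivity of an algebraic map you have not related to $A_K$.

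The missing idea is to apply exactness to the whole simplicial Lie model, not to the Fox complex. At a fixed nilpotent stage and simplicial degree, Hall coordinates and BCH involve finitely many denominators. So for almost all $p$, $\log\widehat N_{s,m,p}$ is the $\Z_p$-span of a fixed full lattice in $L_{s,m}$, and hence $\log A(P)_\bullet\cong L_\bullet\widehat\otimes_\Q\A$. Completed scalar extension is exact on linearly compact spaces, so it commutes with Moore homology. Moore cycles and boundaries of $\exp L_\bullet$ are exponentials of those of $N_\bullet L$. Hence $\pi_j(Q_K)=0$ gives $\pi_j(A_K)=0$ for every $j\ge2$ directly, with no Fox or Whitehead argument over $\A$.

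Two other steps are incomplete.

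\emph{Nonvanishing of a Fox derivative.} You assert that some $\overline{\partial r/\partial x_i}$ is nonzero in the quotient algebra but never prove it. The reduction $\overline{\partial(u^m)}=m\,\overline{\partial u}$ only moves the question to the root $u$. The paper handles every $r\ne1$ at once. Let $w$ be the lowest degree of $r-1$ in $\Q\langle\!\langle T_1,\dots,T_d\rangle\!\rangle$. Some $D_i(r)$ has a nonzero component of degree $w-1$, while the closed ideal generated by $r-1$ lies in degrees $\ge w$.

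\emph{Higher homotopy.} For $j\ge3$ a ``nilpotent Whitehead theorem'' does not apply. $Q_K$ is not nilpotent, and the relevant invariant is continuous rational cohomology of simplicial prounipotent groups, not singular cohomology of spaces. What you need is a Whitehead criterion in that category:
\begin{itemize}
\item By rational Shapiro, the connected kernel has $H^*_{\rm rat}(N_\bullet;\Q)$ equal to $\Q$, $0$, $M^\vee$ in degrees $0,1,2$, and zero above.
\item An exterior-degree filtration of the Chevalley--Eilenberg totalization shows that a cohomology isomorphism of connected simplicial prounipotent groups is a weak equivalence.
\item Once $M=0$, apply this to $1\to N_\bullet$.
\end{itemize}
The same Shapiro computation is what justifies your identification of $\pi_2(Q_K)$ with the Fox kernel. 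The universal-cover caveat you raise for $j\ge3$ applies there equally.

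Finally, your concern about $\pi_1$ at $j=2$ is unnecessary. With $\pi_3(A_K)=\pi_2(A_K)=0$, the segment $\pi_3(A_K)\to\pi_2(Z_K)\to\pi_2(Q_K)\oplus\prod_p\pi_2(K_p^\wedge)\to\pi_2(A_K)$ already gives the isomorphism.
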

The proof combines the arithmetic square of \cite[Proposition~4.2]{square} with the direct proof of rational asphericity in Section~\ref{ratasphsection}. Exact completed scalar extension from \(\Q\) to the finite adeles transfers that vanishing to the adelic entry.

If \(r\) is not a proper power, Lyndon's theorem \cite{lyndon} gives \(K\simeq BG\), where \(G=F_d/(r)\). In this case the group-theoretic description of \(\pi_2(Z_K)\) is
\begin{equation}\label{introseq}
0\longrightarrow\limone_nL_2\nu_n(G)\longrightarrow\pi_2(\Z_\infty BG)
\longrightarrow\limi_nM_n(G)\longrightarrow0,
\end{equation}
where \(\nu_n(G)=G/\gamma_{n+1}G\) and \(M_n(G)=L_1\nu_n(G)\). Section~\ref{baer} defines the functors and all transition maps. The middle term is the second homotopy group of the \emph{integral completion}; it is distinct from the zero group \(\pi_2(BG)\).

The next theorem applies to arbitrary finite connected two-complexes.
\begin{theorem}\label{mannanmain}
Let \(K\) be a finite connected two-complex. Let \((X,k)\) be either \((\Q_\infty K,\Q)\) or \((K_p^\wedge,\Z_p)\), and put \(H=\pi_1X\). There is a canonical \(H\)-equivariant isomorphism
\[
\pi_3X\cong\bigl(\pi_2X\widehat\otimes_k\pi_2X\bigr)^\tau,
\qquad\tau(x\otimes y)=y\otimes x,
\]
with diagonal action. The homotopy modules are linearly compact in the rational case and compact in the pro-\(p\) case, and the tensor products are taken in these categories. Moreover,
\[
\pi_2X=0\quad\Longrightarrow\quad\pi_jX=0\quad(j\ge2).
\]
No finiteness assumption on \(H\) is required, and the pro-\(p\) statement includes \(p=2\).
\end{theorem}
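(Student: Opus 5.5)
We follow the classical argument of Mannan for finite two-complexes, carried out in the completed setting. Let \(\widetilde X\) be the universal cover of \(X\); in the pro-\(p\) case we mean the homotopy fibre of \(X\to B\pi_1X\), and in the rational case the analogous fibre over the prounipotent classifying space. Since \(K\) is two-dimensional, we model \(X\) by the completed free simplicial loop model \(GK\) with generators only in simplicial degrees \(0\) and \(1\). Degreewise completion of a free simplicial group built from one-cells and two-cells again gives a completed free simplicial group of dimension \(\le1\) over the completed group ring. Consequently the completed cellular chain complex of \(\widetilde X\) has the form
\[
0\to C_2\xrightarrow{\partial_2}C_1\to C_0,
\]
with \(C_i\) free over \(k[[H]]\), or over the completed group algebra of the prounipotent group in the rational case.

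\emph{Step 1.} We identify \(\pi_2X=\ker\partial_2\), which is a closed submodule of a compact or linearly compact module. The completed Hurewicz theorem applies because \(\widetilde X\) is simply connected and complete; in the rational case we use nilpotent-space theory, and in the pro-\(p\) case the Bousfield--Kan tower. It gives \(\pi_2X\cong H_2\widetilde X\). Since the chain complex stops in degree \(2\), we get \(H_j\widetilde X=0\) for \(j\ge3\).

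\emph{Step 2.} We build a map \(\widetilde X\to M=K(\pi_2X,2)\) that is the identity on \(\pi_2\), and let \(F\) be its homotopy fibre, so that \(\pi_jF=\pi_j\widetilde X\) for \(j\ge3\). The Serre spectral sequence for \(F\to\widetilde X\to M\) uses completed coefficients. It is valid because all groups involved are compact or linearly compact, so inverse limits are exact and Mittag--Leffler holds automatically. Combining it with \(H_j\widetilde X=0\) for \(j\ge3\), we obtain \(\pi_3X=H_3F\cong H_4M\). Whitehead's \(\Gamma\)-functor then gives \(H_4(K(A,2);k)\cong\Gamma(A)\), where \(A=\pi_2X\). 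For \(A\) torsion-free over \(k\) (\(\Q\) or \(\Z_p\)), \(\Gamma(A)\) is canonically the symmetric invariants \((A\widehat\otimes A)^\tau\). This is exactly where \(p=2\) needs care.

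\emph{Main obstacle.} The key point is that \(A=\ker\partial_2\) is a closed submodule of a free compact \(\Z_p\)-module, so it is torsion-free and in fact topologically free. We compute \(\Gamma\) of a topologically free \(A=\prod\Z_p\) or \(\limi\) of free modules by passing to inverse limits of finite-rank pieces, where the rational and \(p\)-adic identification \(\Gamma(\Z_p^n)\cong(\Z_p^n\otimes\Z_p^n)^\tau\) holds for all primes, including \(p=2\). The quadratic map \(a\mapsto a\otimes a\) provides the canonical isomorphism. Exactness of \(\limi\) on compact modules then lets us pass to the limit. \(H\)-equivariance follows from naturality of every construction under deck transformations, which act diagonally on the tensor square.

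\emph{Step 3.} If \(\pi_2X=0\), then Step 1 gives \(H_j\widetilde X=0\) for all \(j\ge2\). Together with simple connectivity, the completed Hurewicz/Whitehead theorem in the complete category yields that \(\widetilde X\) is weakly contractible. Hence \(\pi_jX=0\) for all \(j\ge2\). No finiteness of \(H\) is needed anywhere, since only compactness of the chain modules was used.
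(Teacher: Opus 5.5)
Your outline is the classical Whitehead--Mannan route. Every step of it assumes a homology theory of \(\widetilde X\) that is computed by the completed complex \(C_\bullet\) and satisfies Hurewicz, Serre and Whitehead theorems, and you never supply one. The tools you name do not provide it.

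In the rational case \(\widetilde X=WN_\bullet\) is a simply connected rational space. Nilpotent-space theory therefore concerns its \emph{ordinary} homology, and that homology is not computed by \(C_\bullet\). Whitehead's exact sequence \(\Gamma(\pi_2)\to\pi_3\to H_3\to0\), together with the formula being proved, shows that ordinary \(H_3(\widetilde X;\Q)\) is the cokernel of \(\operatorname{Sym}^2_\Q M\to\widehat{\operatorname{Sym}}{}^2_\Q M\). This cokernel is nonzero whenever \(M=\pi_2X\) is infinite-dimensional. That already happens for \(K=S^1\vee S^2\), where \(\partial_2=0\) and \(M=\Lambda\). The same finite-rank versus completed discrepancy appears in ordinary mod-\(p\) homology in the pro-\(p\) case, so the Bousfield--Kan tower does not give Step~1 either. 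Hence the claim that \(H_j\widetilde X=0\) for \(j\ge3\) because the chain complex stops in degree~\(2\) is false for ordinary homology. For a completed homology it is exactly what has to be proved.

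Three ingredients are missing.
\begin{enumerate}
\item A continuous cohomology: Quillen's for simplicial pro-\(p\) groups, and derived invariants on rational discrete modules for simplicial prounipotent groups (not the cohomology of the abstract group of rational points).
\item Its computation for the connected kernel \(N_\bullet\), via Shapiro's lemma and cohomological dimension one of free completed groups. Your ``consequently'' skips this; the paper proves it in Lemmas~\ref{ratkernel} and~\ref{kernelhomology}.
\item A Whitehead theorem with no finite-dimensionality hypothesis. Your Step~3 needs this as well.
\end{enumerate}
In the pro-\(p\) case these are available from Quillen, including \cite[Corollary~2.9]{quillenprofinite}. There your spectral-sequence and \(\Gamma\) argument could plausibly be completed with \(\Q_p/\Z_p\) coefficients and Pontryagin duality. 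In the rational case the Whitehead criterion is a genuinely new ingredient. It is the paper's Lemma~\ref{ratwhitehead}, proved by filtering a Chevalley--Eilenberg total cone by exterior degree.

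With these tools the paper also bypasses your Step~2. It maps a free model \(E_\bullet\), on a basis of \(M\) in simplicial degree one (a completed wedge of two-spheres), into \(N_\bullet\). It checks that this map is a cohomology isomorphism and concludes \(E_\bullet\simeq N_\bullet\). It then computes \(\pi_2E_\bullet\) from finite wedges and inverse limits. Canonicity and equivariance come from the Hopf operation and the Whitehead product. Finally, \(M=0\) forces \(E_\bullet=1\), which gives the vanishing statement at once.
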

Appendix~\ref{rationalmannan} proves the rational statement through a direct cohomological Whitehead criterion and a free model on cycles. Appendix~\ref{quillenmannan} gives the pro-\(p\) argument using Quillen's criterion \cite[Corollary~2.9]{quillenprofinite}.

For the discrete one-relator groups to which Lyndon's asphericity theorem applies, the derived-limit term in \eqref{introseq} can be removed.
\begin{theorem}\label{baermain}
Let \(G=F_d/(r)\), where \(r\ne1\) is not a proper power. Then
\[
\limone_nL_2\nu_n(G)=0,
\]
and the natural comparison maps give isomorphisms
\begin{equation}\label{baeridentification}
\pi_2(\Z_\infty BG)\cong\limi_nM_n(G)
\cong\prod_p\pi_2((BG)_p^\wedge).
\end{equation}
\end{theorem}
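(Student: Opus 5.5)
The plan is to work through the Bousfield--Kan tower for \(\Z_\infty BG\), whose stages are built from the nilpotent quotients \(\nu_n(G)=G/\gamma_{n+1}G\). That tower yields the short exact sequence \eqref{introseq}. Granting \eqref{introseq}, the first isomorphism in \eqref{baeridentification} follows at once from \(\limone_nL_2\nu_n(G)=0\). It is therefore enough to show that the tower \(\{L_2\nu_n(G)\}_n\) is Mittag--Leffler. For a countable tower of countable abelian groups this is equivalent to the vanishing of \(\limone\). The second isomorphism then comes from Theorem~\ref{main}: Lyndon's theorem gives \(K\simeq BG\), so \(Z_K=\Z_\infty BG\) and \(K_p^\wedge=(BG)_p^\wedge\), and \eqref{product} with \(j=2\) reads \(\pi_2(\Z_\infty BG)\cong\prod_p\pi_2((BG)_p^\wedge)\). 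One still has to check that the composite with \(\limi_nM_n(G)\) is the natural comparison map. This is a naturality check for the map of towers from the integral tower to the \(\F_p\)-towers.

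To get Mittag--Leffler, I will describe \(L_i\nu_n(G)\) through a free simplicial resolution of \(G\) coming from the one-relator presentation. Since \(r\) is not a proper power, Lyndon's identity theorem makes the relation module \(R/[R,R]\) free of rank one over \(\Z G\), generated by \(r\). Consequently the Fox derivative column \((\partial r/\partial x_i)\) is injective. The derived functors of \(\nu_n\) are computed by a Hopf-type formula. Concretely, \(L_1\nu_n(G)=M_n(G)=(R\cap\gamma_{n+1}F)/\gamma_{n+1}(R,F)\), where \(\gamma_{n+1}(R,F)\) is the relative commutator generated by \(R\) and \(n\) copies of \(F\). For \(L_2\nu_n\) I will use the two-stage free model \(F*F(r')\to F\), and interpret \(L_2\nu_n(G)\) as the kernel of the map induced by the truncated Fox operator on the graded pieces \(\gamma_k/\gamma_{k+1}\) for \(k\le n\). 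Put differently, \(L_2\nu_n(G)\) is the homology of the complex
\[
\nu_n\text{-truncation of } \Z G\xrightarrow{\ \partial r\ }(\Z G)^d
\]
in the augmentation-filtration sense.

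The key step is to show that the images of \(L_2\nu_m(G)\to L_2\nu_n(G)\) stabilize for \(m\gg n\). I would identify these images with elements of the truncated kernel that lift to a compatible kernel element modulo \(I^{m}\), where \(I\) is the augmentation ideal. Their intersection over all \(m\) is the image of the kernel of the augmentation-completed Fox operator on \(\widehat{\Z G}\). Stabilization then needs an Artin--Rees type statement: the obstruction to lifting a kernel element from level \(n\) to level \(m\) lies in a finitely generated piece that does not grow once \(m\) passes a bound depending only on \(n\) and the degree of the leading term of \(r\). Each \(L_2\nu_n(G)\) is a finitely generated abelian group, because \(\nu_n(G)\) is finitely generated nilpotent. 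So the images form a descending chain of subgroups of a finitely generated abelian group. Such a chain can fail to stabilize only through an infinite descent like \(p^k\Z\). I will exclude this prime by prime: after tensoring with \(\Z_p\), the corresponding pro-\(p\) tower consists of compact groups, and the relevant \(\limone\) vanishes by compactness. Rationally, Theorem~\ref{main} gives \(\pi_2(Q_K)=0\), and this forces the rational images to stabilize. The last step is to combine the rational and \(p\)-local stabilization, with a bound uniform in \(p\), to obtain integral stabilization.

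I expect that uniformity across all primes to be the main obstacle. My first attempt is to read the bound off the leading homogeneous component of \(r\) in the Magnus embedding. This reduction is independent of \(p\) except for the finitely many primes dividing the coefficients of that leading term, and those primes I would handle separately.
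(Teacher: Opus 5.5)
Your derivation of the second isomorphism, from Lyndon's \(K\simeq BG\) and Theorem~\ref{main} at \(j=2\), matches the paper. The Mittag--Leffler argument for \((L_2\nu_n(G))_n\), however, has a genuine gap, for three reasons.

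First, you analyze the wrong tower. The truncated Fox kernels \(E_n=\ker(D_{r,n})\) on \(\Lambda/I^n\) receive maps from \(M_n(G)=L_1\nu_n(G)\), not from \(L_2\nu_n(G)\). Even those maps need not be isomorphisms at finite stages (Remark~\ref{finitekernelremark}); they agree only after passing to the limit. The group \(L_2\nu_n(G)=\pi_2(\nu_nP_\bullet)\) sits one simplicial degree higher, so your Artin--Rees paragraph says nothing about it.

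Second, your rational input is in the wrong degree. Descending images in the finite-dimensional spaces \(L_2\nu_n(G)\otimes\Q\) always stabilize. Moreover, \(\pi_2(Q_K)=0\) only gives \(\limi_n(M_n(G)\otimes\Q)=0\) and \(\limone_n(L_2\nu_n(G)\otimes\Q)=0\), and the latter is automatic.

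Third, the \(p\)-adic step carries no information. The \(\limone\) of any tower of compact groups vanishes. Yet \(\Z\xleftarrow{p}\Z\xleftarrow{p}\cdots\) has \(\limone\cong\Z_p/\Z\ne0\), while its \(\Z_p\)-tensored tower has \(\limone=0\). The uniform-in-\(p\) bound your route depends on is left unproved, and it is exactly the hard part.

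The missing idea makes any prime-by-prime analysis unnecessary: use \(\pi_3(\Q_\infty BG)=0\), from Proposition~\ref{rataspherical} or Theorem~\ref{main} with \(j=3\). For \(X_{n,\bullet}=\nu_n(F_\bullet)\otimes\Q\), exactness of nilpotent rationalization gives \(\pi_iX_{n,\bullet}\cong L_i\nu_n(G)\otimes\Q\). The rational Milnor sequence
\[
0\to\limone_n\pi_3X_{n,\bullet}\to\pi_3(\Q_\infty BG)\to\limi_n\bigl(L_2\nu_n(G)\otimes\Q\bigr)\to0
\]
then forces \(\limi_n(L_2\nu_n(G)\otimes\Q)=0\). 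The stable image in \(L_2\nu_n(G)\otimes\Q\) equals the image of this inverse limit, so it is zero. Hence some transition \(L_2\nu_m(G)\to L_2\nu_n(G)\) lands in \(\operatorname{Tor}(L_2\nu_n(G))\). That group is finite because \(L_2\nu_n(G)\) is finitely generated; this needs the degreewise finite cellular resolution, not merely finite generation of \(\nu_n(G)\). All later images then form a descending chain in a finite group, so they stabilize. This is Lemma~\ref{rationalML} combined with Proposition~\ref{derivedvanishing}.
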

The vanishing follows from rational asphericity, finite generation at each nilpotent stage, and a rational-to-integral Mittag--Leffler argument (Section~\ref{derivedvanishingsection}). Thus a compatible nonzero Baer class exists exactly when the integral completion has nonzero second homotopy.

The integral Fox comparison of Section~\ref{integralfox} strengthens Theorem~\ref{baermain}. Put \(\Lambda=\Z G\), let \(I\) be its augmentation ideal, and write \(\widehat\Lambda_I=\lim_n\Lambda/I^n\). The completed Fox operator is
\[
\widehat D_r(a)=\bigl(a\,\overline{\partial r/\partial x_i}\bigr)_{i=1}^d.
\]
\begin{theorem}\label{foxmain}
Under the hypotheses of Theorem~\ref{baermain}, the abelianizations of the relative relation quotients induce canonical topological isomorphisms
\[
\pi_2(\Z_\infty BG)\cong\limi_n M_n(G)
\cong\ker\widehat D_r
\cong\prod_p\ker D_{r,p},
\]
where \(D_{r,p}\) is the Fox operator on \(\Z_p[[G_p^\wedge]]\). These groups are profinite and torsion-free, and the comparisons respect the natural \(G\)-actions. The topology on the first group is the Moore quotient topology of the lower central completion model.
\end{theorem}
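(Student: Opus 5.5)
The first isomorphism \(\pi_2(\Z_\infty BG)\cong\limi_nM_n(G)\) is already given by Theorem~\ref{baermain}. I would therefore concentrate on identifying \(\limi_nM_n(G)\) with \(\ker\widehat D_r\), and then split the result into local factors.

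\textbf{Finite stages.} Write \(N=(r)\subset F=F_d\) and \(R_n=N\gamma_{n+1}F\), so that \(\nu_n(G)=F/R_n\). Hopf's formula gives
\[
M_n(G)=(R_n\cap\gamma_{n+1}F)\big/[R_n,F]\gamma_{n+1}F
\]
for the relative Baer invariant; the transition maps are those of Section~\ref{baer}. The relative relation quotient \(R_n/[R_n,R_n]\) is a relation module. Abelianization and the Fox derivatives give the exact sequence
\[
0\to R_n^{ab}\to\bigoplus_i\Z\nu_n\to\Z\nu_n\to\Z\to 0 .
\]
Reducing modulo the augmentation filtration, the class of \(r\) in \(R_n^{ab}\) together with its \(G\)-translates maps to \(\bigoplus_i\Lambda/I^{c(n)}\) via \(a\mapsto(a\,\overline{\partial r/\partial x_i})\). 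Here \(c(n)\to\infty\) is chosen by the Magnus/Fox comparison \(\gamma_{k}F-1\subset I_F^{k}\). This produces, for each \(n\), a map
\[
\phi_n\colon M_n(G)\to \ker\bigl(D_r\otimes\Lambda/I^{c(n)}\bigr)\text{ modulo the image of the previous stage},
\]
compatible in \(n\). The key point is that elements of \(M_n(G)\) are words in \(\gamma_{n+1}F\) that are products of conjugates of \(r\). Their Fox images therefore lie in \(I^{n}\)-small terms, so the resulting element of \(\Lambda/I^{c(n)}\) is killed by \(\widehat D_r\) modulo high powers.

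\textbf{Passage to the limit.} I would show that \(\limi\phi_n\) is an isomorphism onto \(\ker\widehat D_r\). Injectivity comes from exactness of the Fox sequence at each stage, plus left exactness of \(\limi\). Surjectivity needs the \(\limone\) of the kernels of the finite-stage transgression to vanish. Here I would reuse the Mittag--Leffler property from Section~\ref{derivedvanishingsection}: each stage is finitely generated over \(\Z\), and the rational tower has vanishing limit by rational asphericity (Section~\ref{ratasphsection}). This is the step I expect to be the main obstacle. The cokernel tower relating \(\Lambda/I^n\)-kernels to \(M_n\) must be shown Mittag--Leffler, and truncating \(\widehat D_r\) need not commute with kernels. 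I would first try to control it by finite generation of \(\Lambda/I^n\) as an abelian group. Since \(G\) is finitely presented, \(I/I^2=G^{ab}\) is finitely generated, so every stage is a finitely generated abelian group. Taking the images of the stages then stabilizes after tensoring with \(\Q\), and the torsion parts are finite.

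\textbf{Topology, local splitting, and the remaining claims.} With \(\ker\widehat D_r\) given the inverse-limit topology, the maps \(\phi_n\) are continuous. A continuous bijection of limits of finitely generated groups is a topological isomorphism once both sides are profinite. Profiniteness itself follows from the rational vanishing: the rationalization of \(\ker\widehat D_r\) is zero, so each finitely generated stage image is torsion, hence finite. For the local decomposition I would write \(\widehat\Lambda_I\) as a product of its \(p\)-adic parts, using \(\Z\nu_n\) torsion data and the isomorphism \(\Z_p\otimes\widehat\Lambda_I\to\Z_p[[G_p^\wedge]]\) on the torsion part. Then \(\ker\widehat D_r=\prod_p\ker D_{r,p}\), compatibly with \eqref{baeridentification}. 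Torsion-freeness follows from the absence of zero divisors in \(\Z_p[[G_p^\wedge]]\)-type algebras in the relevant graded pieces. More concretely, if \(ma\) lies in \(\ker\widehat D_r\) then \(a\) does, because \(m\) acts injectively on \(\widehat\Lambda_I\), a torsion-free limit. \(G\)-equivariance is clear, since every map above is induced by conjugation-compatible Fox calculus.
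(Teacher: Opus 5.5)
Your proposal has a genuine gap at the step that carries the theorem: showing that \(\limi_nM_n(G)\to\ker\widehat D_r\) is bijective.

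\textbf{Injectivity.} You derive it from ``exactness of the Fox sequence at each stage plus left exactness of \(\limi\)''. That presupposes the finite-stage maps \(\beta_n\colon M_n(G)\to E_n\) are injective, where \(E_n\) is the kernel of the Fox operator on \(\Lambda/I^n\). They are not. The kernel of \(\beta_n\) is the image of \(H_2(R^{(n)};\Z)\) under the central extension \(M_n(G)\to R/[R,{}_nF]\to R^{(n)}\) (Remark~\ref{finitekernelremark}). Concretely, for \(r=[x_1,x_2]\) one has \(M_n(G)\cong\gamma_{n+1}F/\gamma_{n+2}F\) and \(E_n\cong I^{n-1}/I^n\). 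At \(n=4\) these have ranks \(6\) and \(4\), and the element \([[x_1,x_2],[[x_1,x_2],x_1]]\in[R,R]\cap\gamma_5F\) gives a nonzero class killed by \(\beta_4\).

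\textbf{Surjectivity.} Nothing in your argument controls the cokernels either. Finite generation and rational vanishing (Lemma~\ref{rationalML}) only reduce both towers to their finite torsion subgroups. They cannot decide whether the induced map \(\limi_n\operatorname{Tor}M_n(G)\to\limi_n\operatorname{Tor}E_n\) of profinite groups is bijective.

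\textbf{What the paper does instead.} It gets bijectivity from the local side. You already had \(\limi_nM_n(G)\cong\prod_p\pi_2((BG)_p^\wedge)\) from Theorem~\ref{baermain}, coming from the arithmetic square and rational and adelic asphericity. The missing ingredients are:
\begin{enumerate}
\item[(a)] the identification \(\pi_2(K_p^\wedge)\cong\ker D_{r,p}\), via the pro-\(p\) relation and identity sequences (Proposition~\ref{cdcriterion}, Lemma~\ref{kernelhomology});
\item[(b)] the splitting \(\ker\widehat D_r\cong\prod_p\ker D_{r,p}\) (Proposition~\ref{foxML});
\item[(c)] a check that the resulting square with your specific map commutes. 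This is done by comparing the integral and local Fox coefficient maps on the cellular model and extending by density.
\end{enumerate}
Three isomorphic sides then force the fourth to be an isomorphism. Vanishing of the inverse limit of the finite-stage kernels is a corollary of this, not an input.

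\textbf{Other steps that need repair.}
\begin{enumerate}
\item[(1)] Your Hopf formula \((R_n\cap\gamma_{n+1}F)/[R_n,F]\gamma_{n+1}F\) gives the trivial group, since \(\gamma_{n+1}F\subseteq R_n\). The correct formula is \((R\cap\gamma_{n+1}F)/[R,{}_nF]\) with \(R=(r)\).
\item[(2)] The relevant module is not the relation module of \(\nu_n(G)\). It is the abelianization of \(R/[R,{}_nF]\), which is \(\Lambda/I^n\) because Lyndon's identity theorem makes \(R_{\rm ab}\) free on \(\bar r\). No auxiliary \(c(n)\) is needed.
\item[(3)] The claim that \(\widehat\Lambda_I\) is a product of its \(p\)-adic parts is false. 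For \(G=\Z\), \(\Z[[t]]\) is a proper subring of \(\prod_p\Z_p[[t]]\). Only the kernel splits, because after the Mittag--Leffler reduction it is a limit of finite groups.
\item[(4)] The local identification uses \(\limi_n(A_n\otimes\Z_p)\cong\Z_p[[G_p^\wedge]]\), not the algebraic tensor product \(\Z_p\otimes\widehat\Lambda_I\).
\item[(5)] The theorem asserts a homeomorphism for the Moore quotient topology on \(\pi_2(\Z_\infty BG)\). Compactness of that topology is not automatic; it is the content of Lemma~\ref{milnortopology}.
\item[(6)] Your torsion-freeness argument works once you justify that \(\widehat\Lambda_I\) is torsion-free. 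One way is its embedding into \(\prod_p\Z_p[[G_p^\wedge]]\).
\end{enumerate}
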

The comparison is induced by coefficient maps in a common cellular model. Section~\ref{integralfox} also describes the finite-stage kernels and their lifting obstructions.

Rational asphericity is known \cite[Corollary~12.7]{liemodels}; Section~\ref{ratasphsection} gives a proof using completed Fox derivatives. We use the arithmetic square of \cite{square}.

The remaining problem is to find a one-relator pro-\(p\) presentation with \(\pi_2(K_p^\wedge)\ne0\) whose closed relation subgroup admits no normal generator of the form \(u^p\).

\section{The adelic argument}\label{adelic}
\subsection{Models and the arithmetic square}
Choose the free cellular simplicial presentation model \(P_\bullet\) with \(WP_\bullet\simeq K\), finite free in each degree and with nondegenerate free generators only in degrees zero and one. We follow \cite[Sections~1.2 and~3]{square}: \(W\) denotes the reduced classifying-space construction, often written \(\overline W\). Its contractible total-space construction is a different functor. In particular,
\[
\pi_1(WH_\bullet)\cong\pi_0(H_\bullet),\qquad
\pi_j(WH_\bullet)\cong\pi_{j-1}(H_\bullet)\quad(j\ge2).
\]
Geometric realization is understood when a simplicial set is regarded as a space. Put
\[
N_{s,\bullet}=P_\bullet/\gamma_{s+1}P_\bullet.
\]
Let \(U_{s,m}\) be the unipotent \(\Q\)-group whose rational points are the rational nilpotent completion of \(N_{s,m}\). The completion models are
\[
\widehat P_{\Q,\bullet}=\limi_sU_{s,\bullet}(\Q),\qquad
\widehat P_{p,\bullet}=\limi_s\widehat N_{s,\bullet,p},
\]
with \(Q_K\simeq W\widehat P_{\Q,\bullet}\) and \(K_p^\wedge\simeq W\widehat P_{p,\bullet}\). Define
\begin{equation}\label{adeles}
A(P)_\bullet=\limi_s\prod_p'\bigl(U_{s,\bullet}(\Q_p),\widehat N_{s,\bullet,p}\bigr),
\qquad A_K=WA(P)_\bullet.
\end{equation}
At a fixed stage, the restricted product consists of families integral at almost every prime. The finite exceptional set may depend on the stage and the simplicial degree.

The arithmetic-square theorem \cite[Proposition~4.2]{square} gives a homotopy Cartesian square
\begin{equation}\label{squareeq}
\begin{tikzcd}
Z_K\arrow[r]\arrow[d]&Q_K\arrow[d]\\
\displaystyle\prod_pK_p^\wedge\arrow[r]&A_K.
\end{tikzcd}
\end{equation}
The order of limits in \eqref{adeles} is part of the construction: the restricted product precedes the inverse limit. The cited theorem is applied to the same simplicial model and its natural comparison maps. For \(j\ge2\), its exact sequence contains
\begin{equation}\label{les}
\pi_{j+1}(A_K)\longrightarrow\pi_j(Z_K)\longrightarrow
\pi_j(Q_K)\oplus\prod_p\pi_j(K_p^\wedge)\longrightarrow\pi_j(A_K)
\end{equation}

\subsection{The cohomological-dimension criterion}\label{cdsection}
\begin{proposition}\label{cdcriterion}
Let \(K=K(x_1,\ldots,x_d\mid r)\), with \(1\ne r\in F_d\), and let \(G=F_d/(r)\). For every prime \(p\),
\[
\pi_1(K_p^\wedge)\cong G_p^\wedge,\qquad
\pi_2(K_p^\wedge)=0\quad\Longleftrightarrow\quad\cdp(G_p^\wedge)\le2.
\]
The statement includes proper-power relators and does not require the presentation to be minimal on generators.
\end{proposition}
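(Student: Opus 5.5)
We compute $\pi_1$ and $\pi_2$ of $K_p^\wedge$ from the simplicial model $\widehat P_{p,\bullet}$, then translate $\pi_2(K_p^\wedge)=0$ into exactness of a completed cellular chain complex, which is exactly a length-two resolution of $\Z_p$.

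\emph{Fundamental group.} We have $\pi_1(K_p^\wedge)\cong\pi_0(\widehat P_{p,\bullet})$. The model $P_\bullet$ has free generators only in degrees zero and one. Hence $\pi_0$ is the coequalizer of $d_0,d_1\colon\widehat P_{p,1}\to\widehat P_{p,0}$. Here $\widehat P_{p,0}=F_d{}^\wedge_p$, and the degree-one generator maps to $r$ and to $1$. So $\pi_0$ is $F_d{}^\wedge_p/(r)$, with closed normal closure. By right exactness of pro-$p$ completion, this group is $G_p^\wedge$. Degreewise completion commutes with this coequalizer because the lower central series quotients are finitely generated nilpotent. No minimality of the presentation is used.

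\emph{The chain complex.} Let $\widetilde X$ be the universal cover of $X=K_p^\wedge$, in the pro-$p$ sense, i.e.\ the fibre of $X\to BG_p^\wedge$. I would show that the completed cellular chains give a complex of compact $\Z_p[[G_p^\wedge]]$-modules
\[
0\to \pi_2(X)\to \Z_p[[G_p^\wedge]]\xrightarrow{D_{r,p}}\Z_p[[G_p^\wedge]]^d\to\Z_p[[G_p^\wedge]]\to\Z_p\to0 .
\]
This complex should be exact, with $\pi_2(X)=\ker D_{r,p}$. The argument has three steps.
\begin{enumerate}
\item Simply connectedness of $\widetilde X$ and the pro-$p$ Hurewicz theorem (Quillen's criterion, as used for Theorem~\ref{mannanmain}) give $\pi_2X\cong H_2(\widetilde X;\Z_p)$.
\item The chains of $\widetilde X$ are computed from the two-cell structure of $K$. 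The boundary maps are the completed Fox derivatives.
\item Exactness at $\Z_p[[G_p^\wedge]]^d$ is the completed Lyndon/Fox exactness, i.e.\ the relation module sequence for $F^\wedge_p/(r)$.
\end{enumerate}

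\emph{The equivalence.} If $\pi_2X=0$, the complex is a free resolution of length two, so $\cdp(G_p^\wedge)\le2$. Conversely, suppose $\cdp\le2$. Then the kernel of $\Z_p[[G]]^d\to\Z_p[[G]]$ is projective, and the image of $D_{r,p}$ equals this kernel. I would then argue that $\ker D_{r,p}=0$. The image of $D_{r,p}$ is cyclic and projective over the local ring $\Z_p[[G]]$, hence free of rank $\le1$. So $D_{r,p}$ is either zero or a surjection $\Lambda\to\Lambda$ onto a free rank-one module. A surjective endomorphism of $\Lambda$ is an isomorphism: it splits, and $\Lambda$ is local with $\F_p[[G]]$ having no nontrivial idempotents. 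Hence the kernel vanishes.

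The case $D_{r,p}=0$ must be excluded separately. It means $r\in\Phi$-type closure, so that $G_p^\wedge$ is free pro-$p$ and $\pi_2=\Lambda$. I would treat it by counting Euler characteristic or $H_2(G_p^\wedge,\F_p)$. In that case $H_2$ is detected by the relation module, and $\cdp\le1$ contradicts nothing. So this case needs care: I would instead directly compare $\pi_2X$ with the relation module $\overline{R}$ via the Hopf-type sequence $0\to\pi_2X\to\Lambda\to\overline R\to0$.

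\textbf{Main obstacle.} The hardest step is this last identification: showing that, under $\cdp\le2$, the map $\Lambda\to\overline R$ is injective. This must hold even when $r$ is redundant in the pro-$p$ completion, e.g.\ $r\in\gamma_2$-closure, or $r$ a $p$-power. My first attempt is the description of $\pi_2$ as $H_2$ of the universal cover, combined with Gaschütz-type freeness of $\overline R$ when $\cdp\le2$.
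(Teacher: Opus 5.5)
Your overall route is the paper's: relation and identity sequences over \(\Lambda=\Z_p[[G_p^\wedge]]\), projectivity of the relation module when \(\cdp\le2\), and Nakayama. Your local-ring step is correct as far as it goes. The image of \(D_{r,p}\) is cyclic and projective, hence free of rank at most one, and in rank one the split surjection \(\Lambda\to\Lambda\) has zero kernel.

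\textbf{The gap.} The rank-zero case \(D_{r,p}=0\) is left open, and you misdescribe it. It has nothing to do with \(r\) lying in a Frattini-type closure. For \(r=x_1^p\) or \(r\) a commutator one has \(D_{r,p}\ne0\), and \(G_p^\wedge\) is not free. So these cases are already covered by your rank-one argument, and the ``main obstacle'' you anticipate (proper powers, \(r\in\gamma_2\), Gasch\"utz-type freeness) is not an obstacle at all.

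What \(D_{r,p}=0\) actually means is this. \(D_{r,p}\) is \(\Lambda\)-linear, and \(D_{r,p}(1)\) is the image of \(\bar r\) under the injection \(R_{\rm ab}\hookrightarrow\Lambda^d\) of your step~3. Since \(R_{\rm ab}\) is topologically generated by \(\bar r\), one has \(D_{r,p}=0\) exactly when \(R_{\rm ab}=0\). That happens exactly when the closed normal subgroup \(R=(r)\) of \(F=F_d^{(p)}\) is trivial, because a nontrivial pro-\(p\) group has nontrivial abelianization.

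In that situation \(G_p^\wedge=F\) is free, \(\cdp\le1\), and \(\pi_2(K_p^\wedge)=\Lambda\ne0\). The equivalence genuinely fails there. So no Euler-characteristic or \(H_2\) count can ``handle'' this case; you have to show it does not occur.

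\textbf{The missing input.} This is precisely where the hypothesis \(1\ne r\) must be used, and your proposal never uses it. The discrete free group \(F_d\) is residually a finite \(p\)-group, so \(r\ne1\) in \(F\). Hence \(R\ne1\), \(\bar r\ne0\), and \(D_{r,p}\ne0\), and your argument then closes.

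The paper does the same thing in slightly different packaging. It chooses a finite \(p\)-group quotient of \(F\) in which \(r\) survives, to show that the class of \(r\) is nonzero, hence a basis, in the at most one-dimensional space \(R/\overline{R^p[R,F]}\). When \(R_{\rm ab}\) is projective, the split sequence \(0\to\pi_2(K_p^\wedge)\to\Lambda\to R_{\rm ab}\to0\) is then an isomorphism modulo the augmentation ideal. Compact Nakayama gives \(\pi_2(K_p^\wedge)=0\).
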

\begin{proof}
Put \(F=F_d^{(p)}\), \(R=(r)\le F\), and \(H=F/R=G_p^\wedge\). The finite-type completion model identifies \(K_p^\wedge\) with \(W\widehat P_{p,\bullet}\), where \(\widehat P_{p,\bullet}\) is the free pro-\(p\) simplicial presentation of \(H\). Its zeroth homotopy group is \(H\), and its first is the second homotopy group of that presentation.

The criterion of \cite[Section~5.3, Propositions~15 and~16, with the pro-\(p\) convention of Section~4.5]{mikhovich2025} applies once the chosen relator is a basis of \(R/\overline{R^p[R,F]}\). Here this condition is automatic: normal generation makes that quotient at most one-dimensional over \(\F_p\), and it is nonzero. Indeed, \(r\ne1\) in the residually pro-\(p\) free group, so choose a finite \(p\)-group quotient \(F\to E\) in which its image survives. The nontrivial image \(S\) of \(R\) has
\(S/(S^p[S,E])\ne0\), by the nilpotence of the augmentation ideal of \(\F_p[E]\) acting on \(S/\Phi(S)\). Thus the class of \(r\) is the required basis.

Set \(\Lambda=\Z_p[[H]]\), \(J=\ker(\Lambda\to\F_p)\), and \(R_{\rm ab}=R/\overline{[R,R]}\). The completed relation and identity sequences are
\[
0\longrightarrow R_{\rm ab}\longrightarrow\Lambda^d
\longrightarrow\Lambda\longrightarrow\Z_p\longrightarrow0,
\]
\[
0\longrightarrow\pi_2(K_p^\wedge)\longrightarrow\Lambda
\xrightarrow{\,1\mapsto\bar r\,}R_{\rm ab}\longrightarrow0.
\]
If \(\cdp H\le2\), dimension shifting makes \(R_{\rm ab}\) projective as a compact \(\Lambda\)-module. The second sequence splits. Its map on quotients modulo \(J\) is an isomorphism by the preceding basis calculation; hence the kernel has zero quotient modulo \(J\). Compact Nakayama gives \(\pi_2(K_p^\wedge)=0\). Conversely, if that kernel vanishes, the first sequence is a free resolution of \(\Z_p\) of length at most two, so \(\cdp H\le2\).
\end{proof}

\subsection{Completed scalar extension}
A linearly compact rational vector space is an inverse limit \(V=\limi_sV_s\) of finite-dimensional discrete spaces with surjective transition maps. For a commutative \(\Q\)-algebra \(E\), set
\[
T_E(V)=V\widehat\otimes_\Q E:=\limi_s(V_s\otimes_\Q E),
\]
using the discrete topology on the factors. The definition is independent of the chosen cofinal system of open finite-codimension subspaces. When \(E\) is infinite-dimensional, \(T_E(V)\) need not be linearly compact over \(\Q\); exactness below concerns its underlying vector space.

\begin{lemma}\label{exact}
The functor \(T_E\) is exact on linearly compact rational vector spaces with continuous linear maps. For a complex \(C_\bullet\) of such spaces,
\begin{equation}\label{homexact}
H_q(T_EC_\bullet)\cong T_E(H_qC_\bullet).
\end{equation}
\end{lemma}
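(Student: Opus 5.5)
The plan is to reduce everything to two facts: an exact sequence of linearly compact spaces can be presented as an inverse limit of exact sequences of finite-dimensional spaces with surjective transitions, and inverse limits of such systems are exact.

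\emph{Step 1: well-definedness.} Write \(V=\limi_sV_s\). Since each \(V_s\otimes_\Q E\) is discrete, \(T_E(V)\) is defined algebraically, and passing to a cofinal subsystem does not change the limit. I will use that linearly compact spaces are exactly duals of discrete spaces: \(V\cong W^*\) with \(W=\Hom_{\rm cont}(V,\Q)\). Continuous maps \(V\to V'\) then correspond to linear maps \(W'\to W\), and the functor \(V\mapsto V\) is anti-equivalent to ordinary vector spaces. In particular the category is abelian, and a sequence \(0\to V'\to V\to V''\to0\) is exact exactly when the dual sequence of discrete spaces is exact.

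\emph{Step 2: exactness.} Given a short exact sequence, dualize to \(0\to W''\to W\to W'\to0\). Choose a splitting \(W=W''\oplus W'\), which is possible over a field. Then the original sequence is isomorphic to the split sequence \(V'\to V'\times V''\to V''\), compatibly with the topologies, since dualization turns direct sums into products. Next choose cofinal systems \(V'=\limi V'_s\) and \(V''=\limi V''_s\); the products \(V'_s\times V''_s\) give a cofinal system for \(V\). Tensoring with \(E\) preserves the stagewise split exact sequences, and the inverse limit of split exact sequences is split exact. Hence \(T_E\) is exact. Equivalently, one can note \(T_E(W^*)=\Hom_\Q(W,E)\), which is visibly exact in \(W\) because every short exact sequence of discrete spaces splits. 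I would use this formula as the cleanest proof, and Step 1 shows it is independent of the presentation.

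\emph{Step 3: homology.} For a complex \(C_\bullet\), the cycles, boundaries and homology are again linearly compact, because the category is abelian and images of continuous maps are closed, by compactness. Apply exactness of \(T_E\) to the sequences \(0\to Z_q\to C_q\to B_{q-1}\to0\) and \(0\to B_q\to Z_q\to H_q\to0\). This identifies \(T_E(Z_q)\) with \(\ker T_E(d)\) and \(T_E(B_q)\) with \(\im T_E(d)\), which gives \eqref{homexact}.

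The main obstacle is Step 1: showing that continuous maps and kernels, images and cokernels stay inside the category with the correct topologies. Closedness of images and the openness needed for quotient topologies both come from the duality \(V=W^*\), so I would establish that anti-equivalence first and deduce everything else from it.
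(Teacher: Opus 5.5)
Your proposal is correct and follows the paper's proof. Both rest on the identity \(T_E(V)=\Hom_\Q(V^\vee,E)\), with exactness coming from splitting of sequences of discrete vector spaces. Both then use closedness of images to keep cycles, boundaries and homology in the category, and apply exactness to their defining short exact sequences.

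Your split-product argument in Step 2 is an equivalent variant. The anti-equivalence in Step 1, where you mean \(V\mapsto V^\vee\), just makes explicit what the paper uses implicitly.
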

\begin{proof}
Continuous duality gives \(V^\vee=\varinjlim_sV_s^*\) and
\[
T_E(V)=\Hom_\Q(V^\vee,E).
\]
Continuous duality is exact, and \(\Hom_\Q(-,E)\) is exact because short exact sequences of vector spaces split. Images of continuous maps between linearly compact spaces are closed. Hence cycles, boundaries, and homology remain in the source category, and exactness applied to their defining sequences gives \eqref{homexact}.
\end{proof}

\subsection{Logarithms and Moore complexes}
Write \(\A=\prod_p'(\Q_p,\Z_p)\), \(L_{s,\bullet}=\Lie(U_{s,\bullet})\), and \(L_\bullet=\limi_sL_{s,\bullet}=\log\widehat P_{\Q,\bullet}\). Each \(L_{s,m}\) is finite-dimensional. At fixed \(s,m\), Hall coordinates and the Baker--Campbell--Hausdorff formulas have denominators involving finitely many primes. For a full lattice \(\Lambda_{s,m}\subset L_{s,m}\), outside a finite set of places one has
\[
\log\widehat N_{s,m,p}=\Lambda_{s,m}\otimes_\Z\Z_p.
\]
Changing integrality at finitely many places does not change a restricted product. Consequently,
\[
\log\prod_p'\bigl(U_{s,m}(\Q_p),\widehat N_{s,m,p}\bigr)
\cong L_{s,m}\otimes_\Q\A.
\]
The identification is natural in the rational homomorphisms induced by faces, degeneracies, and tower maps. Therefore
\begin{equation}\label{logadel}
\log A(P)_\bullet\cong T_{\A}(L_\bullet).
\end{equation}
All completeness statements here refer to the nilpotent-stage filtration; a common exceptional set of primes is unnecessary.

For a simplicial Lie algebra, put
\[
N_qL=\bigcap_{i=1}^q\ker d_i,\qquad\partial=d_0.
\]
Exactness preserves finite intersections of kernels, so Lemma~\ref{exact} gives
\begin{equation}\label{normalization}
N_\bullet(T_EL)\cong T_E(N_\bullet L),\qquad
H_qN_\bullet(T_EL)\cong T_E(H_qN_\bullet L).
\end{equation}
For the simplicial group \(H_\bullet=\exp L_\bullet\), Moore cycles and boundaries are
\[
Z_qH=\bigcap_{i=0}^q\ker d_i,\qquad B_qH=d_0(N_{q+1}H).
\]
Exponentiation commutes with morphisms and gives bijections
\(Z_qH=\exp(Z_qNL)\) and \(B_qH=\exp(B_qNL)\). Thus, for \(q\ge1\),
\[
\pi_q(\exp L_\bullet)=0\quad\Longleftrightarrow\quad H_qN_\bullet L=0.
\]
The same identities hold for the complete BCH algebra \(T_{\A}L_\bullet\). Together with \eqref{logadel}--\eqref{normalization}, they prove:

\begin{proposition}\label{transfer}
For every \(j\ge2\), \(\pi_j(Q_K)=0\) implies \(\pi_j(A_K)=0\).
\end{proposition}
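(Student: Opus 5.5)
The plan is to reduce both vanishing statements to Moore homology of simplicial Lie algebras and then apply Lemma~\ref{exact}. Since \(Q_K\simeq W\widehat P_{\Q,\bullet}\) and \(A_K=WA(P)_\bullet\), the properties of \(W\) recalled above give
\[
\pi_j(Q_K)\cong\pi_{j-1}(\widehat P_{\Q,\bullet}),\qquad
\pi_j(A_K)\cong\pi_{j-1}(A(P)_\bullet),\qquad j\ge2,
\]
so with \(q=j-1\ge1\) we must show that \(\pi_q(\widehat P_{\Q,\bullet})=0\) implies \(\pi_q(A(P)_\bullet)=0\).

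First, I apply the exponential correspondence to \(H_\bullet=\widehat P_{\Q,\bullet}=\exp L_\bullet\). The hypothesis then becomes \(H_qN_\bullet L=0\), where \(N_\bullet L\) is the normalized complex of the simplicial complete Lie algebra \(L_\bullet=\limi_sL_{s,\bullet}\). Each \(L_{s,m}\) is finite-dimensional and discrete, so each \(L_m\) is linearly compact. The face maps are continuous, being inverse limits of linear maps at finite stages, so \(N_\bullet L\) is a complex in the category where Lemma~\ref{exact} applies.

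Next, by \eqref{normalization} with \(E=\A\), I get \(H_qN_\bullet(T_{\A}L)\cong T_{\A}(H_qN_\bullet L)=T_{\A}(0)=0\). Then \eqref{logadel} identifies \(A(P)_\bullet\) with \(\exp T_{\A}(L_\bullet)\), naturally in faces and degeneracies. So the simplicial group \(A(P)_\bullet\) is the exponential of the simplicial complete BCH algebra \(T_{\A}L_\bullet\). Applying the exponential correspondence of Moore cycles and boundaries to this algebra gives \(\pi_q(A(P)_\bullet)=0\).

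The main obstacle is justifying that the correspondence \(Z_qH=\exp(Z_qNL)\), \(B_qH=\exp(B_qNL)\) holds for \(T_{\A}L_\bullet\). This algebra is not linearly compact over \(\Q\), and \(\A\) is not a field, so the BCH series has to be checked to converge. I would handle this stagewise. For each \(s\), the algebra \(L_{s,m}\otimes_\Q\A\) is nilpotent, so the BCH series is a finite polynomial with rational coefficients and \(\exp\) is a bijection. It is natural in the face maps and compatible with the tower maps, so taking \(\limi_s\) gives a bijection that commutes with all simplicial operators. A face map \(d_i\) is then a group homomorphism and a Lie homomorphism at once, and \(\exp\) carries its kernel onto the group-theoretic kernel, since \(\exp(x)=1\iff x=0\). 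Intersections of these kernels, and images under \(d_0\), therefore correspond.

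Finally, the quotient \(Z_q/B_q\) is trivial exactly when \(Z_qNL=B_qNL\), so the group statement matches the Lie statement. This closes the argument for every \(j\ge2\).
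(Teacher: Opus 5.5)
Your proposal is correct and follows the paper's own argument: the degree shift under \(W\), the bijections \(Z_qH=\exp(Z_qNL)\) and \(B_qH=\exp(B_qNL)\), exactness of \(T_{\A}\) through \eqref{normalization}, and the identification \eqref{logadel} of \(A(P)_\bullet\) with \(\exp T_{\A}(L_\bullet)\). Your stagewise check that the exponential correspondence persists for \(T_{\A}L_\bullet\) (nilpotence at each stage \(s\), finite BCH polynomials, then passage to \(\limi_s\)) spells out what the paper states in the single sentence ``the same identities hold for the complete BCH algebra''.
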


\subsection{Rational asphericity}\label{ratasphsection}
We use the completed Hopf algebra \(\mathcal O(H)^*\) and its prounipotent interpretation as in \cite[Sections~1.2 and~2.1]{identity}. We first record a characteristic-zero property of this algebra. It will supply the vanishing of \(\pi_2\) needed by the rational argument of Appendix~\ref{rationalmannan}.

\begin{lemma}\label{ratdomain}
For a prounipotent group \(H\) over \(\Q\), the convolution algebra \(\mathcal O(H)^*\) has no zero divisors.
\end{lemma}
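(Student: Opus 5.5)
We reduce to the finite-dimensional unipotent case and then use an associated graded argument. Write \(H=\limi_\alpha H_\alpha\) as an inverse limit of unipotent \(\Q\)-groups \(H_\alpha\) of finite type with surjective transition maps. Then \(\mathcal O(H)=\varinjlim_\alpha\mathcal O(H_\alpha)\), and dually
\[
\mathcal O(H)^*=\limi_\alpha\mathcal O(H_\alpha)^*
\]
as topological algebras. The convolution product is continuous and compatible with the projections.

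For a unipotent group \(U\) of finite type with Lie algebra \(\mathfrak u\), the algebra \(\mathcal O(U)^*\) is the completion \(\widehat{\mathcal U}(\mathfrak u)\) of the universal enveloping algebra at its augmentation ideal \(\mathfrak a\). This is the standard identification from \cite{identity}: \(\mathcal O(U)\) is the continuous dual of \(\widehat{\mathcal U}(\mathfrak u)\), and it is the union of the duals of the \(\mathcal U(\mathfrak u)/\mathfrak a^n\). Thus \(\mathcal O(H)^*\cong\widehat{\mathcal U}(\mathfrak h)\) with \(\mathfrak h=\Lie H\) pronilpotent. Equivalently, it is the completion of \(\mathcal U\) of the pro-Lie algebra along the filtration by powers of the augmentation ideal, taken compatibly over the tower.

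Filter \(A=\mathcal O(H)^*\) by the closed ideals \(\overline{\mathfrak a^n}\), where \(\mathfrak a\) is the augmentation ideal. On each finite stage \(A_\alpha=\widehat{\mathcal U}(\mathfrak h_\alpha)\) this filtration is separated, since \(\bigcap_n\mathfrak a^n=0\) in the completion. By PBW, the associated graded \(\mathrm{gr}\,A_\alpha\) is the universal enveloping algebra of the graded Lie algebra \(\mathrm{gr}\,\mathfrak h_\alpha=\bigoplus\gamma_i\mathfrak h_\alpha/\gamma_{i+1}\mathfrak h_\alpha\). This is Quillen's theorem on the associated graded of the augmentation filtration in characteristic zero. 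The universal enveloping algebra of any Lie algebra over a field is a domain, because its own PBW graded is a polynomial ring. For a complete separated filtration with domain associated graded, a product \(ab\) of nonzero elements has leading term equal to the product of the leading terms, which is nonzero. Hence each \(A_\alpha\) is a domain.

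The main obstacle is passing to the limit, since an inverse limit of domains need not be a domain in general. The plan is to apply the leading-term argument directly to \(A\) with the filtration \(F_n=\ker\bigl(A\to\limi_\alpha A_\alpha/\mathfrak a_\alpha^n\bigr)\). We show that \(\mathrm{gr}\,A\) embeds in \(\limi_\alpha\mathrm{gr}\,A_\alpha\), and that each graded piece of the latter is the limit of the degree-\(n\) pieces \(\mathcal U(\mathrm{gr}\,\mathfrak h_\alpha)_n\). Then \(\mathrm{gr}_nA\cong\bigl(\mathcal U(\mathrm{gr}\,\mathfrak h)\bigr)_n^\wedge\), where the degree-\(n\) parts are finite-dimensional at each stage and surjections keep the limit exact. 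Multiplication \(\mathrm{gr}_m\times\mathrm{gr}_n\to\mathrm{gr}_{m+n}\) is injective on nonzero pairs. To see this, take nonzero \(x,y\) and choose a stage \(\alpha\) where both images are nonzero. Their product is nonzero in the domain \(\mathrm{gr}\,A_\alpha\), so it is nonzero in the limit. Finally, \(F\) is separated on \(A\), since an element killed at every stage and every level is zero. Given nonzero \(a,b\), pick \(m,n\) with \(a\in F_m\setminus F_{m+1}\) and \(b\in F_n\setminus F_{n+1}\). Their leading symbols multiply to a nonzero element, so \(ab\ne0\).
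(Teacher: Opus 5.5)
Your proof is correct and follows the paper's route: the PBW/weight argument gives $\operatorname{gr}_{\mathfrak a}\mathcal U(\mathfrak h_\alpha)\cong\mathcal U(\operatorname{gr}_\gamma\mathfrak h_\alpha)$, leading terms in the separated completion make each stage a domain, you identify $\mathcal O(H_\alpha)^*\cong\widehat{\mathcal U}(\mathfrak h_\alpha)$, and then you pass to the inverse limit. The only real difference is the final step, where the limit is not an obstacle: in a directed inverse system of rings, nonzero $a,b$ have nonzero images $a_\alpha,b_\alpha$ at a common stage, so $(ab)_\alpha=a_\alpha b_\alpha\ne0$ (the paper concludes exactly this way, and it is the same argument you already use on the graded level), so your filtration $F_n$ on $A$ and the unused claim about $\operatorname{gr}_nA$ can be dropped.
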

\begin{proof}
First let \(H\) be finite-dimensional, with nilpotent Lie algebra \(\mathfrak h\). Choose a basis adapted to its lower central series and assign each basis vector its lower central weight. The PBW monomials give the augmentation-adic filtration of \(U(\mathfrak h)\): its \(n\)th term is spanned by monomials of weight at least \(n\). Indeed, a vector of weight \(w\) is a sum of iterated brackets of length \(w\), and commutation into PBW order never lowers total weight. Consequently
\[
\operatorname{gr}_I U(\mathfrak h)\cong
U(\operatorname{gr}_{\gamma}\mathfrak h).
\]
The algebra on the right has no zero divisors: its ordinary PBW filtration has associated graded algebra \(\operatorname{Sym}(\operatorname{gr}_{\gamma}\mathfrak h)\), a polynomial domain. The weighted filtration is separated and has finite-dimensional quotients. Thus its completion is a domain as well, by multiplying the first nonzero homogeneous terms.

In exponential coordinates, polynomial functions are exhausted by finite weighted degree. Duality between functions and PBW distributions identifies \(\mathcal O(H)^*\) with this completed enveloping algebra; see also the complete Hopf-algebra correspondence in \cite[Appendix~A]{quillenrational}.

In general write \(H=\limi_\alpha H_\alpha\) with finite-dimensional unipotent quotients. Then \(\mathcal O(H)^*=\limi_\alpha\mathcal O(H_\alpha)^*\). If two elements are nonzero, their images are both nonzero in some common quotient algebra, which is a domain by the preceding argument. Their product is therefore nonzero.
\end{proof}

\begin{proposition}\label{rataspherical}
If \(K=K(x_1,\ldots,x_d\mid r)\) with \(1\ne r\in F_d\), then
\[
\pi_j(\Q_\infty K)=0\qquad(j\ge2).
\]
Proper powers are allowed.
\end{proposition}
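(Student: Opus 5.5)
The plan is to follow the rational analogue of Proposition~\ref{cdcriterion}. First show \(\pi_2(\Q_\infty K)=0\) using completed Fox derivatives and Lemma~\ref{ratdomain}. Then invoke the rational half of Theorem~\ref{mannanmain}, proved in Appendix~\ref{rationalmannan}, which gives \(\pi_2X=0\Rightarrow\pi_jX=0\) for \(j\ge2\).

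Put \(H=\pi_1(\Q_\infty K)\). This is the prounipotent completion of \(G=F_d/(r)\), i.e.\ the quotient of the free prounipotent group \(\widehat F\) on \(x_1,\dots,x_d\) by the closed normal subgroup \(R\) generated by \(r\). Let \(\mathcal A=\mathcal O(H)^*\), which is a domain by Lemma~\ref{ratdomain}. From the model \(\widehat P_{\Q,\bullet}\), with free generators only in degrees zero and one, I would write the rational completed cellular chain complex of the universal cover as linearly compact \(\mathcal A\)-modules:
\[
0\to\pi_2(Q_K)\to\mathcal A\xrightarrow{\ \widehat D_r\ }\mathcal A^d\to\mathcal A\to\Q\to0 .
\]
Here the middle map is right multiplication by the completed Fox derivatives \(\overline{\partial r/\partial x_i}\). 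This is the rational counterpart of the identity sequence in Proposition~\ref{cdcriterion}: the kernel of the first map is \(\pi_1\) of the degree-one Moore complex. That kernel is \(\pi_2\) of \(W\widehat P_{\Q,\bullet}\), identified via the Hurewicz map for the universal cover, or equivalently via logarithms and the Moore-complex computation of the previous subsection. The point to justify is that the complex computes completed homology exactly, with no \(\limone\) error. Each stage \(U_{s,\bullet}\) is finite-dimensional, so Lemma~\ref{exact}-type Mittag-Leffler arguments apply.

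Next I need \(\widehat D_r\ne0\), i.e.\ some \(\overline{\partial r/\partial x_i}\ne0\) in \(\mathcal A\). By the fundamental formula \(r-1=\sum_i(\partial r/\partial x_i)(x_i-1)\) in \(\mathcal O(\widehat F)^*\), vanishing of all images would force the relation module \(R_{\rm ab}\otimes\Q\) to map to zero in \(\mathcal A^d\). Injectivity of the completed relation sequence \(R_{\rm ab}\hookrightarrow\mathcal A^d\) would then give \(R_{\rm ab}=0\). That is impossible: \(r\ne1\) survives in a finite-dimensional unipotent quotient, since free groups are residually torsion-free nilpotent. The argument of Proposition~\ref{cdcriterion} then shows that \(r\) spans \(R/\overline{[R,\widehat F]}\ne0\).

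With \(\widehat D_r\ne0\), pick \(i\) with \(c=\overline{\partial r/\partial x_i}\ne0\). If \(a\in\ker\widehat D_r\), then \(ac=0\), so \(a=0\) because \(\mathcal A\) has no zero divisors. Hence \(\pi_2(Q_K)=0\). Notably this does not care whether \(r\) is a proper power, since the domain property replaces Lyndon's identity theorem. Theorem~\ref{mannanmain} then yields \(\pi_j(Q_K)=0\) for all \(j\ge2\).

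The main obstacle is the first step: identifying \(\pi_2(Q_K)\) with \(\ker\widehat D_r\) on the completed algebra, rather than on \(\Q G\) or some weaker completion. This needs \(\pi_1(Q_K)\) to be exactly \(H\) and the completed chains to be free \(\mathcal A\)-modules. I would try to obtain it from the Moore-complex and logarithm description by passing to the abelianized degree-one group as a module over \(\mathcal A\), checking stagewise over the finite-dimensional \(U_{s,\bullet}\).
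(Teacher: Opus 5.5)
Your argument is correct, and its overall structure is the paper's: identify \(\pi_2(Q_K)\) with \(\ker\widehat D_r\) on \(\Lambda=\mathcal O(H)^*\), use Lemma~\ref{ratdomain} to get injectivity once one Fox coordinate is nonzero, and finish with the rational free model of Appendix~\ref{rationalmannan}. The step you single out as the main obstacle is exactly Lemma~\ref{ratkernel}. The paper proves it cohomologically, via normalized derivation cochains, rational Shapiro, and the exterior-degree filtration of the Chevalley--Eilenberg total complex. You can cite it rather than attempt a stagewise check, which would be awkward anyway because finite-stage comparison maps need not be isomorphisms (cf.\ Remark~\ref{finitekernelremark} for the integral analogue). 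The real difference is how you show that some \(\overline{\partial r/\partial x_i}\ne0\). The paper uses a degree argument in \(A=\Q\langle\!\langle T_1,\dots,T_d\rangle\!\rangle\). There \(\Lambda=A/\overline{A(r-1)A}\), and \(r-1\ne0\) has some lowest degree \(w\) by the Magnus embedding, so the closed ideal lies in degrees \(\ge w\). The last-letter decomposition forces some \(D_i(r)\) to have a nonzero term of degree \(w-1\), which therefore survives in \(\Lambda\). You argue structurally instead. The image of \(R_{\rm ab}\to\mathcal A^d\) is the closed \(\mathcal A\)-span of the Fox vector of \(r\). If that vector vanished, left exactness of the relation sequence would give \(R_{\rm ab}=0\), which is impossible because \(R\) has a nontrivial image in a finite-dimensional unipotent quotient. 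This works, but it needs the completed rational relation sequence \(0\to R_{\rm ab}\to\mathcal A^d\to\mathcal A\to\Q\to0\). That is the Shapiro identification \(R_{\rm ab}\cong\operatorname{Tor}_1^{\mathcal O(F)^*}(\mathcal A,\Q)\) in linearly compact modules, which the paper never states. The degree argument, by contrast, is two lines and needs only the quotient description of \(\Lambda\). Your route has the merit of running parallel to the pro-\(p\) argument of Proposition~\ref{cdcriterion}, and it makes explicit that nonvanishing of the Fox row is just nontriviality of the relation module. (Minor: in the prounipotent category \(R_{\rm ab}=R/\overline{[R,R]}\) is already a \(\Q\)-space, so the \(\otimes\Q\) is superfluous.)
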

\begin{proof}
Let \(F\) be the free prounipotent group on the \(x_i\), \(H=F/(r)\), and \(\Lambda=\mathcal O(H)^*\). The completed algebra of \(F\) is
\[
A=\Q\langle\!\langle T_1,\ldots,T_d\rangle\!\rangle,
\qquad T_i=x_i-1,
\]
and \(\Lambda=A/\overline{A(r-1)A}\). The latter follows equally by taking the completed enveloping algebra of the quotient Lie algebra: \(\log r\) and \(r-1\) generate the same closed two-sided ideal.

The Magnus expansion is injective on the discrete free group, so \(r-1\ne0\). Let \(w\ge1\) be its lowest degree. Define the continuous Fox derivatives by the unique last-letter decomposition
\[
r-1=\sum_{i=1}^d D_i(r)T_i.
\]
At least one \(D_i(r)\) has a nonzero term of degree \(w-1\). The entire closed ideal \(\overline{A(r-1)A}\) lies in degree at least \(w\); hence this derivative has nonzero image \(\overline{D_i(r)}\) in \(\Lambda\).

Lemma~\ref{ratkernel}, applied to the rational simplicial presentation, identifies
\[
\pi_2(\Q_\infty K)=\ker\left(
\Lambda\longrightarrow\Lambda^d,\quad
\lambda\longmapsto
(\lambda\overline{D_1(r)},\ldots,\lambda\overline{D_d(r)})
\right).
\]
Since \(\Lambda\) is a domain and one coordinate is nonzero, this map is injective. The rational free model of Appendix~\ref{rationalmannan} now gives vanishing of every higher homotopy group.
\end{proof}

\begin{proof}[Proof of Theorem~\ref{main}]
Proposition~\ref{rataspherical} and adelic transfer give \(\pi_j(Q_K)=\pi_j(A_K)=0\) for \(j\ge2\). The two outer groups and the rational summand in \eqref{les} vanish, proving \eqref{product}.
\end{proof}

\begin{lemma}\label{prophomotopy}
For \(j\ge2\), \(\pi_j(K_p^\wedge)\) carries a natural abelian pro-\(p\) group structure. In particular, multiplication by a prime \(\ell\ne p\) is an automorphism.
\end{lemma}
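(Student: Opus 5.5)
We use the model \(K_p^\wedge\simeq W\widehat P_{p,\bullet}\) from Section~\ref{adelic}. Here \(\widehat P_{p,\bullet}=\limi_s\widehat N_{s,\bullet,p}\) is a simplicial group that is degreewise pro-\(p\): each \(\widehat N_{s,m,p}\) is a finitely generated nilpotent pro-\(p\) group, and inverse limits of pro-\(p\) groups are pro-\(p\). For \(j\ge2\) we have \(\pi_j(K_p^\wedge)\cong\pi_{j-1}(\widehat P_{p,\bullet})\). So it suffices to show that the homotopy groups \(\pi_q\), \(q\ge1\), of a degreewise pro-\(p\) simplicial group with continuous faces and degeneracies carry natural pro-\(p\) topologies.

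The argument runs through Moore complexes. The subgroup \(N_qH=\bigcap_{i\ge1}\ker d_i\) is closed, being an intersection of kernels of continuous homomorphisms between compact Hausdorff groups. Hence it is pro-\(p\), and so is \(Z_q H\). The boundary subgroup \(B_qH=d_0(N_{q+1}H)\) is the continuous image of a compact group, so it is compact and therefore closed in \(Z_qH\). Thus \(\pi_qH=Z_qH/B_qH\) is a quotient of a pro-\(p\) group by a closed normal subgroup, and hence is pro-\(p\). For \(q\ge1\) it is abelian by the standard simplicial-group argument, and the abelian group structure is the Moore-complex one. Naturality follows because simplicial maps act degreewise by continuous homomorphisms and preserve \(N\), \(Z\) and \(B\). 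It also does not depend on the chosen model: two such models are related by maps of degreewise pro-\(p\) simplicial groups, and a continuous bijective homomorphism of compact groups is a topological isomorphism.

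For the final assertion, let \(A\) be an abelian pro-\(p\) group, so \(A=\limi A_\alpha\) with finite abelian \(p\)-groups \(A_\alpha\). Multiplication by \(\ell\ne p\) is bijective on each \(A_\alpha\), since \(\ell\) is prime to \(|A_\alpha|\). These inverses are compatible with the transition maps, and passing to the limit gives an inverse to multiplication by \(\ell\) on \(A\). Equivalently, \(A\) is a \(\Z_p\)-module and \(\ell\in\Z_p^\times\).

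The main obstacle is the closedness of \(B_qH\), without which the quotient would not be Hausdorff. Compactness settles it here, which is why we work with the degreewise pro-\(p\) model \(\widehat P_{p,\bullet}\) and not with the discrete model \(P_\bullet\).
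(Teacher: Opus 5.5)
Your proof is correct and follows the paper's argument: closed Moore cycles in the degreewise compact pro-\(p\) model \(\widehat P_{p,\bullet}\), Moore boundaries closed because they are compact images, a pro-\(p\) quotient that is abelian in positive degrees, and invertibility of multiplication by \(\ell\) on finite \(p\)-groups passing to the inverse limit. Your additional remark on independence of the model is not needed for the lemma, but it does no harm.
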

\begin{proof}
The simplicial group \(\widehat P_{p,\bullet}\) consists of compact pro-\(p\) groups and continuous maps. Its Moore cycles are closed, and its Moore boundaries are compact images, hence closed. Their quotient is therefore pro-\(p\). It is abelian in positive simplicial degrees. Multiplication by \(\ell\) is invertible on every finite abelian \(p\)-group and hence on their inverse limit.
\end{proof}

\section{Baer invariants and the second homotopy group}\label{baer}
\subsection{Derived functors and transition maps}
Let \(G\) be a discrete group. A free simplicial resolution \(F_\bullet\to G\) is a free simplicial group, with bases compatible with degeneracies, such that
\[
\pi_0F_\bullet\cong G,\qquad\pi_iF_\bullet=0\quad(i>0).
\]
The augmentation is to the constant simplicial group \(G\); one may use the free-group cotriple resolution. For \(\nu_n(G)=G/\gamma_{n+1}G\), define the nonadditive derived functors by
\begin{equation}\label{deriveddef}
L_i\nu_n(G)=\pi_i\bigl(\nu_n(F_\bullet)\bigr).
\end{equation}
They are independent of the free resolution up to natural isomorphism. The index \(i\) is the derived degree, while \(n\) is the nilpotent class. We have \(L_0\nu_n(G)=\nu_n(G)\); for \(i\ge1\), the groups are abelian. Explicitly, for a simplicial group \(H_\bullet\),
\[
N_iH=\bigcap_{k=1}^i\ker d_k,\qquad
\pi_iH_\bullet=\ker(d_0|_{N_iH})/d_0(N_{i+1}H)\quad(i\ge1).
\]
The natural epimorphisms \(\nu_{n+1}\to\nu_n\) induce the transition maps in each tower \(L_i\nu_n(G)\).

The \(n\)th nilpotent Baer invariant, also called the \(n\)-nilpotent multiplier, is \(M_n(G)=L_1\nu_n(G)\). For a free presentation \(G=F/R\), the generalized Hopf formula is
\begin{equation}\label{hopf}
M_n(G)=\frac{R\cap\gamma_{n+1}F}{[R,{}_nF]},
\qquad [R,{}_0F]=R,\quad[R,{}_{n+1}F]=[[R,{}_nF],F].
\end{equation}
See \cite{burnsellis} and \cite[Section~2]{right}. In particular, \(M_1(G)=H_2(G;\Z)\), and \(L_i\nu_1(G)=H_{i+1}(G;\Z)\) for \(i\ge1\). In \eqref{hopf}, the map \(M_{n+1}(G)\to M_n(G)\) is induced by the inclusions of the numerator and denominator. It need be neither injective nor surjective.

\subsection{Limits and the Milnor sequence}
For a countable tower of abelian groups \((A_n,f_n)\), let
\[
\Delta:\prod_nA_n\to\prod_nA_n,\qquad
\Delta((a_n))=(a_n-f_n(a_{n+1}))_n.
\]
Then \(\limi A_n=\ker\Delta\) and \(\limone A_n=\coker\Delta\). Thus the ordinary limit consists of compatible families. If the images \(\im(A_m\to A_n)\) stabilize for each \(n\), the tower satisfies the Mittag--Leffler condition and \(\limone A_n=0\). This applies to towers of finite groups and to towers with surjective transitions.

\begin{proposition}\label{milnor}
Every discrete group \(G\) has a natural short exact sequence \eqref{introseq}.
\end{proposition}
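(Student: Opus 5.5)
We realize \(\Z_\infty BG\) as the homotopy limit of a tower of fibrations and read off the Milnor sequence in degree two.

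\emph{Choice of model.} Choose a free simplicial resolution \(F_\bullet\to G\), for instance the cotriple resolution, so that \(WF_\bullet\simeq BG\). Following Bousfield--Kan \cite{bk}, the integral completion is modeled by the lower central series tower of the simplicial group. Put
\[
\Z_\infty BG\simeq\holim_n W\nu_n(F_\bullet)\simeq W\Bigl(\limi_n\nu_n(F_\bullet)\Bigr).
\]
The second identification holds because the maps \(\nu_{n+1}(F_m)\to\nu_n(F_m)\) are surjective homomorphisms in each degree. Surjective homomorphisms of simplicial groups are Kan fibrations, so the tower \(W\nu_n(F_\bullet)\) is a tower of fibrations between fibrant simplicial sets. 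Its homotopy limit is therefore the ordinary limit. Since \(W\) is a right adjoint, it commutes with this limit.

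\emph{Milnor sequence.} For a tower of fibrations \(Y_n\) with limit \(Y\), Bousfield--Kan give natural short exact sequences
\[
0\to\limone_n\pi_{j+1}Y_n\to\pi_jY\to\limi_n\pi_jY_n\to0 .
\]
We take \(j=2\) and \(Y_n=W\nu_n(F_\bullet)\). By the shift \(\pi_j(WH_\bullet)\cong\pi_{j-1}(H_\bullet)\) recorded in Section~\ref{adelic}, together with definition \eqref{deriveddef}, we get
\[
\pi_3Y_n=L_2\nu_n(G),\qquad \pi_2Y_n=L_1\nu_n(G)=M_n(G).
\]
The transition maps are those induced by \(\nu_{n+1}\to\nu_n\), as specified in the previous subsection. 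Substituting these gives exactly \eqref{introseq}.

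\emph{Remaining verifications.} First, \(\pi_3Y_n\) is abelian, so \(\limone\) is the abelian-group construction of the previous subsection. In degree two the Milnor sequence is an exact sequence of groups, and \(\pi_2\) is abelian in any case.

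Second, base points and \(\pi_1\)-issues are harmless. Each \(Y_n\) is connected, and \(\pi_1Y_n=\nu_n(G)\) with surjective transitions, so \(\limone_n\pi_1Y_n\) is trivial. This triviality is what makes the degree-two sequence well-formed.

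Third, the sequence is natural and independent of the resolution. A map \(G\to G'\) lifts to a map of free resolutions, unique up to simplicial homotopy. That homotopy induces compatible homotopies on each \(\nu_n\), and hence natural maps between the Milnor sequences.

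\emph{Main obstacle.} The hard step is the identification \(\Z_\infty BG\simeq\holim_n W\nu_n(F_\bullet)\). Bousfield--Kan define \(R_\infty\) through the triple resolution \(\Z\otimes-\) rather than the lower central series. I would invoke their comparison theorem for free simplicial groups: the towers \(\{W\nu_n F_\bullet\}\) and the \(\Z\)-tower are pro-homotopy equivalent, by their result that the lower central series tower of a free simplicial group computes \(\Z_\infty\). This is the same model the paper already uses, with \(N_{s,\bullet}\) in Section~\ref{adelic}. I would cite it rather than reprove it.
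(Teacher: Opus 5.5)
Your proposal is correct and is essentially the paper's proof. Both take the lower central series model \(\Z_\infty BG\simeq W\limi_n\nu_n(F_\bullet)\) as the cited input, use degreewise surjectivity to get a tower of fibrations, and read off the Milnor sequence; the paper does this in degree one for the simplicial groups \(\nu_n(F_\bullet)\) and then applies the shift under \(W\), while you do it in degree two for the spaces \(W\nu_n(F_\bullet)\). Your route additionally uses the standard fact that \(W\) carries surjections of simplicial groups to Kan fibrations, and one remark needs correcting: \(\limone_n\pi_1(W\nu_n(F_\bullet))=0\) concerns only \(\pi_0\) of the limit and plays no role in the degree-two sequence, whose basepoint is unambiguous because each \(W\nu_n(F_\bullet)\) is reduced.
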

\begin{proof}
The lower central series model is \(\Z_\infty BG\simeq W\holim_n\nu_n(F_\bullet)\). The tower maps are degreewise surjective, hence fibrations of simplicial groups. The Milnor sequence for its first homotopy group reads
\[
0\to\limone_n\pi_2\nu_n(F_\bullet)\to\pi_1\holim_n\nu_n(F_\bullet)
\to\limi_n\pi_1\nu_n(F_\bullet)\to0.
\]
Using \eqref{deriveddef} and the degree shift under \(W\) proves the result.
\end{proof}
\subsection{Removing the derived-limit term}\label{derivedvanishingsection}
\begin{lemma}\label{rationalML}
Let \(A_1\leftarrow A_2\leftarrow\cdots\) be a tower of finitely generated abelian groups. If
\[
\limi_n(A_n\otimes\Q)=0,
\]
then \((A_n)\) is Mittag--Leffler and \(\limone_nA_n=0\). More precisely, for each \(n\) some transition \(A_m\to A_n\), \(m\ge n\), has image in the finite subgroup \(\operatorname{Tor}(A_n)\).
\end{lemma}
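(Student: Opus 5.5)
The plan is to pass to the rationalized tower \(V_n=A_n\otimes\Q\), which consists of finite-dimensional \(\Q\)-vector spaces, and to use dimension counting there. For fixed \(n\), the images \(\im(V_m\to V_n)\), \(m\ge n\), form a decreasing chain of subspaces of the finite-dimensional space \(V_n\). Hence the chain stabilizes at some stable image \(W_n\subseteq V_n\), attained for all \(m\ge N(n)\). I will then show that the stable images form a tower with surjective transitions \(W_{n+1}\to W_n\).

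For the surjectivity, take \(x\in W_n\). Choose \(k\ge\max(N(n),N(n+1))\) and write \(x\) as the image of some \(y\in V_k\). The image of \(y\) in \(V_{n+1}\) lies in \(\im(V_k\to V_{n+1})=W_{n+1}\), and it maps to \(x\). Since each \(W_{n+1}\to W_n\) is onto, any element of \(W_n\) extends step by step to a compatible family \((w_m)_{m\ge n}\) with \(w_m\in W_m\). Completing it downward by applying the transition maps gives an element of \(\limi_n V_n\). That limit vanishes by hypothesis, so every \(W_n=0\). Concretely, for each \(n\) there is an \(m\ge n\) such that \(A_m\to A_n\to V_n\) is zero. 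Thus \(\im(A_m\to A_n)\) lies in \(\ker(A_n\to A_n\otimes\Q)=\operatorname{Tor}(A_n)\), which proves the "more precisely" clause.

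Finally, \(A_n\) is finitely generated, so \(\operatorname{Tor}(A_n)\) is finite. For \(k\ge m\), the images \(\im(A_k\to A_n)\) form a decreasing chain of subgroups of this finite group, so they stabilize. This is exactly the Mittag--Leffler condition, and the remark preceding Proposition~\ref{milnor} then gives \(\limone_nA_n=0\).

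The only step needing care is the surjectivity of the stable-image tower. The choice \(k\ge\max(N(n),N(n+1))\) handles it, and everything else is routine finite-dimensionality.
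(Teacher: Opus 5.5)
Your proposal is correct and follows essentially the same route as the paper: rationalize, stabilize the images in the finite-dimensional \(V_n\), use surjectivity of the stable-image tower to lift compatibly so that the stable images vanish, and then use finiteness of \(\operatorname{Tor}(A_n)\) to get Mittag--Leffler. Your explicit choice \(k\ge\max(N(n),N(n+1))\) fills in the surjectivity step, which the paper only asserts.
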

\begin{proof}
Set \(V_n=A_n\otimes\Q\). In each finite-dimensional \(V_n\), the descending images of the \(V_m\) stabilize. The stable images form a tower with surjective transitions and have the same inverse limit as \((V_n)\). A compatible lift of any element in a stable image can be chosen successively, so that stable image is exactly the image of the inverse limit in \(V_n\). It is therefore zero. Hence some \(V_m\to V_n\) is zero, and \(\im(A_m\to A_n)\subseteq\operatorname{Tor}(A_n)\). All subsequent images form a descending chain in a finite group, so they stabilize. The countable Mittag--Leffler criterion gives \(\limone A_n=0\). Every compatible family then has all its coordinates in \(\operatorname{Tor}(A_n)\), so \(\limi_n A_n=\limi_n\operatorname{Tor}(A_n)\).
\end{proof}

\begin{proposition}\label{derivedvanishing}
Suppose \(G\) admits a degreewise finite free simplicial resolution and \(\pi_3(\Q_\infty BG)=0\). Then \((L_2\nu_n(G))_n\) is Mittag--Leffler,
\[
\limone_nL_2\nu_n(G)=0,
\qquad
\pi_2(\Z_\infty BG)\xrightarrow{\ \cong\ }\limi_nM_n(G).
\]
The second isomorphism is the natural Milnor edge map.
\end{proposition}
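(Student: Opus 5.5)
The plan is to apply Lemma~\ref{rationalML} to the tower \(A_n=L_2\nu_n(G)\). Two inputs are needed: each \(A_n\) is finitely generated, and \(\limi_n(A_n\otimes\Q)=0\). Once the tower is Mittag--Leffler, \(\limone_nL_2\nu_n(G)=0\). The Milnor sequence of Proposition~\ref{milnor} then makes the edge map \(\pi_2(\Z_\infty BG)\to\limi_nM_n(G)\) an isomorphism, so the last claim is formal.

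\emph{Finite generation.} Let \(F_\bullet\) be a degreewise finite free resolution. Each \(\nu_n(F_m)\) is a finitely generated nilpotent group, hence Noetherian. Every subgroup, in particular the Moore cycles \(Z_2\nu_n(F_\bullet)\), is therefore finitely generated. Its quotient \(L_2\nu_n(G)=\pi_2\nu_n(F_\bullet)\) is a finitely generated abelian group.

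\emph{Rational vanishing.} Let \(U_{n,\bullet}\) be the degreewise Malcev completion of \(\nu_n(F_\bullet)\). For finitely generated nilpotent groups, Malcev completion is exact and rationalizes homotopy. Concretely, \(\pi_i\) of a degreewise finitely generated simplicial nilpotent group tensored with \(\Q\) equals \(\pi_i\) of its degreewise Malcev completion. This follows by comparing Moore complexes through \(\log\) as in \eqref{normalization}, or by induction along the central series using the five lemma. Hence \(L_2\nu_n(G)\otimes\Q\cong\pi_2(U_{n,\bullet}(\Q))\). The tower \(U_{n,\bullet}(\Q)\) has degreewise surjective transitions, and its limit \(\widehat P_{\Q,\bullet}\) models \(\Q_\infty BG\) after applying \(W\). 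The rational Milnor sequence therefore gives
\[
0\to\limone_n\pi_3U_{n,\bullet}\to\pi_2\widehat P_{\Q,\bullet}\to\limi_n\pi_2U_{n,\bullet}\to0 .
\]
The middle term is \(\pi_3(\Q_\infty BG)=0\), so \(\limi_n(L_2\nu_n(G)\otimes\Q)=0\). Lemma~\ref{rationalML} then finishes the proof.

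\emph{Main obstacle.} The delicate step is the identification \(L_2\nu_n(G)\otimes\Q\cong\pi_2U_{n,\bullet}(\Q)\), compatibly in \(n\). This needs exactness of Malcev completion on Moore cycles and boundaries. I would handle it by filtering \(\nu_n(F_\bullet)\) by \(\gamma_k/\gamma_{k+1}\). Each subquotient is a degreewise finitely generated simplicial abelian group, where \(-\otimes\Q\) is exact and commutes with \(\pi_*\). The spectral sequence, or the five lemma along the fibrations \(\gamma_k/\gamma_{n+1}\to\nu_n\to\nu_{k-1}\), then transfers the comparison to \(\nu_n\) and \(U_n\). Naturality in \(n\) comes from the functoriality of these constructions.
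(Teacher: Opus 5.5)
Your proposal is correct and follows the paper's proof. It derives finite generation from the Noetherian nilpotent stages, obtains $\limi_n(L_2\nu_n(G)\otimes\Q)=0$ from the degree-two Milnor sequence for the degreewise rationalized tower, and then applies Lemma~\ref{rationalML} and \eqref{introseq}. The one difference is that the paper simply cites exactness of nilpotent rationalization \cite{hmr} for $\pi_i(\nu_n(F_\bullet)\otimes\Q)\cong L_i\nu_n(G)\otimes\Q$, whereas you sketch the central-extension five-lemma induction. That induction works; the appeal to \eqref{normalization} alone does not relate the integral groups to their Malcev completions.
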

\begin{proof}
Choose such a resolution \(F_\bullet\to G\). Each \(\nu_n(F_j)\) is finitely generated nilpotent. Its Moore-cycle subgroups are finitely generated, as are their quotients. Thus every \(L_i\nu_n(G)\), \(i\ge1\), is finitely generated abelian.

The finite-type rational completion model is
\[
\Q_\infty BG\simeq W\limi_n X_{n,\bullet},
\qquad X_{n,\bullet}=\nu_n(F_\bullet)\otimes\Q,
\]
with rationalization taken degreewise in the nilpotent category. Exactness of nilpotent rationalization \cite{hmr} identifies
\[
\pi_iX_{n,\bullet}\cong L_i\nu_n(G)\otimes\Q
\quad(i\ge1).
\]
The transition maps are degreewise surjective. The degree-two Milnor sequence for this tower therefore gives
\[
0\longrightarrow\limone_n\pi_3X_{n,\bullet}
\longrightarrow\pi_3(\Q_\infty BG)
\longrightarrow\limi_n(L_2\nu_n(G)\otimes\Q)
\longrightarrow0.
\]
The middle term is zero, hence so is the last. Lemma~\ref{rationalML} proves the first assertion, and \eqref{introseq} gives the asserted edge isomorphism. All maps are natural; a functorial free resolution may be used to express that naturality.
\end{proof}

\begin{proof}[Proof of Theorem~\ref{baermain}]
Lyndon's theorem gives \(K\simeq BG\) when \(r\) is not a proper power. Its finite cellular loop model is a degreewise finite free resolution. Proposition~\ref{rataspherical} gives \(\pi_3(\Q_\infty BG)=0\), so Proposition~\ref{derivedvanishing} applies. Combine its edge isomorphism with Theorem~\ref{main}.
\end{proof}
The same reasoning, using \(\pi_{i+1}(\Q_\infty BG)=0\), shows in this one-relator case that \(\limone_nL_i\nu_n(G)=0\) for every \(i\ge1\). In particular the fundamental-group correction \(\limone_nM_n(G)\) also vanishes. Thus the natural map gives
\[
\pi_1(\Z_\infty BG)\cong\widehat G_{\rm nil}.
\]
 None of these statements asserts that the individual derived functors vanish, or that their integral towers are pro-zero. An integral inverse limit can remain nonzero through compatible torsion classes.

\subsection{Local detection and a kernel description}
Under the hypotheses of Theorem~\ref{baermain}, the local form of the Baer-limit identification is
\begin{equation}\label{localmilnor}
\prod_p\pi_2((BG)_p^\wedge)\xrightarrow{\ \cong\ }\limi_nM_n(G).
\end{equation}
In particular, a finite \(p\)-group image singles out a prime.
\begin{corollary}\label{fixedprime}
If there is a nonzero homomorphism \(\limi_nM_n(G)\to C\), where \(C\) is a finite abelian \(p\)-group, then \(\pi_2((BG)_p^\wedge)\ne0\).
\end{corollary}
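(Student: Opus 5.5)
We deduce Corollary~\ref{fixedprime} from the local identification \eqref{localmilnor} and Lemma~\ref{prophomotopy}. Write \(\phi:\limi_nM_n(G)\to C\) for the given nonzero homomorphism. We transport it along the isomorphism \eqref{localmilnor} and obtain a nonzero homomorphism
\[
\psi:\prod_\ell\pi_2((BG)_\ell^\wedge)\longrightarrow C .
\]
The task is then to show that \(\psi\) cannot vanish on the factor indexed by \(\ell=p\), or at least that this factor is nonzero.

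We argue by contradiction and suppose \(\pi_2((BG)_p^\wedge)=0\). Then the product reduces to \(P=\prod_{\ell\ne p}\pi_2((BG)_\ell^\wedge)\). By Lemma~\ref{prophomotopy}, multiplication by \(p\) is an automorphism of each factor \(\pi_2((BG)_\ell^\wedge)\) with \(\ell\ne p\). It acts coordinatewise on the product, so it is an automorphism of \(P\) as well. Hence \(P\) is \(p\)-divisible, and indeed \(P=p^kP\) for every \(k\). Choose \(k\) with \(p^kC=0\), which is possible since \(C\) is a finite abelian \(p\)-group. Then
\[
\psi(P)=\psi(p^kP)=p^k\psi(P)=0,
\]
so \(\psi=0\) and therefore \(\phi=0\), a contradiction. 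This argument works for an arbitrary abstract homomorphism and needs no continuity, because only divisibility is used.

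The step that requires care is confirming that \eqref{localmilnor} really is an isomorphism of abelian groups under the standing hypotheses of Theorem~\ref{baermain}. It is obtained by composing the edge isomorphism of Proposition~\ref{derivedvanishing} with the product decomposition \eqref{product} for \(j=2\), as in \eqref{baeridentification}. Both isomorphisms are group isomorphisms, so the transport of \(\phi\) in the first step is legitimate. Beyond this identification, everything reduces to the elementary divisibility argument above.
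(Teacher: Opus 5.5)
Your proof is correct and follows the paper's argument. You transport the homomorphism along \eqref{localmilnor}, use Lemma~\ref{prophomotopy} to make multiplication by \(p\) invertible on \(\prod_{\ell\ne p}\pi_2((BG)_\ell^\wedge)\), and conclude that every homomorphism from that product to a bounded-exponent \(p\)-group vanishes; the paper does the same, except that it argues directly that the composite is nonzero on the \(p\)-factor rather than by contradiction.
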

\begin{proof}
Compose \eqref{localmilnor} with the given homomorphism. By Lemma~\ref{prophomotopy}, multiplication by \(p\) is invertible on \(\prod_{q\ne p}\pi_2((BG)_q^\wedge)\). Every homomorphism from this product to a bounded-exponent \(p\)-group is zero. The nonzero composite must therefore be nonzero on the \(p\)-factor.
\end{proof}

For an arbitrary group, the different limit \(\limone_nM_n(G)\) measures the fundamental-group correction \cite[Proposition~3.3]{right}:
\[
1\to\limone_nM_n(G)\to\pi_1(\Z_\infty BG)\to\widehat G_{\rm nil}\to1,
\qquad\widehat G_{\rm nil}=\limi_n\nu_n(G).
\]
For the one-relator groups of Theorem~\ref{baermain}, the observation following Proposition~\ref{derivedvanishing} makes this correction zero as well.

There is also a direct description of the ordinary Baer limit. For a free presentation \(G=F/R\), put \(R_n=[R,{}_nF]\) and
\[
\widehat R_F=\limi_{n\ge1}R/R_n.
\]
This relative completion is used in \cite[Section~3, Proposition~3.13]{right}. Inclusion in \(F\) induces a map to \(\widehat F_{\rm nil}\), and the Hopf formula gives
\begin{equation}\label{relativekernel}
\ker(\widehat R_F\to\widehat F_{\rm nil})
=\limi_n\ker(R/R_n\to F/\gamma_{n+1}F)
=\limi_nM_n(G).
\end{equation}
Only preservation of kernels by inverse limits is used here.

\section{Integral Fox kernels and local comparison}\label{integralfox}
Throughout this section \(G=F_d/(r)\), where \(r\ne1\) is not a proper power, and \(K\simeq BG\) is its presentation complex. Set
\[
\Lambda=\Z G,\quad I=\ker(\Lambda\xrightarrow{\varepsilon}\Z),\quad
A_n=\Lambda/I^n,\quad \widehat\Lambda_I=\limi_n A_n.
\]
Write \(d_i\in\Lambda\) for the image of the Fox derivative \(\partial r/\partial x_i\), with the convention \(r-1=\sum_i(\partial r/\partial x_i)(x_i-1)\). Define
\[
D_{r,n}:A_n\longrightarrow A_n^d,\quad a\longmapsto(a d_i)_i,
\qquad E_n=\ker D_{r,n},\qquad E=\limi_n E_n.
\]
All the maps are left \(\Lambda\)-linear; coefficient modules in group homology below are right modules. Preservation of kernels by inverse limits gives
\begin{equation}\label{completedfoxkernel}
E=\ker\bigl(\widehat D_r:\widehat\Lambda_I\longrightarrow\widehat\Lambda_I^{\,d}\bigr).
\end{equation}
The factors \(A_n\) and \(E_n\) carry the discrete topology. Neither \(\widehat\Lambda_I\) nor the lower central completion of a group is being identified with a product of pro-\(p\) completions.

\subsection{The comparison from the Baer tower}
Use the cellular model \(P_\bullet\) of Section~\ref{adelic}. In degrees zero and one it is \(F\) and \(U=F*\langle y\rangle\). For this description choose faces
\[
d_0(y)=1,\quad d_1(y)=r,\qquad d_0(x_i)=d_1(x_i)=x_i,
\]
and put \(H=\ker d_0\), \(J=\ker d_1\). The two choices of end face for the Moore differential give the same homotopy groups; here we use \(N_q=\bigcap_{i<q}\ker d_i\), with differential \(d_q\). The free crossed module is \(H/[H,J]\to F\). Lyndon's asphericity theorem identifies it with the inclusion \(R\hookrightarrow F\), where \(R=(r)\).

\begin{lemma}\label{relativefox}
Put \(U_n=U/\gamma_{n+1}U\), \(F_n=F/\gamma_{n+1}F\), \(H_n=\ker d_{0,n}\), \(J_n=\ker d_{1,n}\), and \(C_n=H_n/[H_n,J_n]\). Then, naturally in \(n\),
\[
Q_n:=d_1(H\cap\gamma_{n+1}U)=[R,{}_nF],\qquad
C_n\cong R/[R,{}_nF],\qquad C_n^{\rm ab}\cong A_n.
\]
The kernel of \(C_n\to F_n\) is \(B_n=M_n(G)\). Abelianization induces maps
\begin{equation}\label{betamaps}
\beta_n:B_n\longrightarrow E_n
\end{equation}
compatible with the transition maps. Consequently the Milnor edge map defines a continuous homomorphism
\[
\rho:\pi_2(\Z_\infty K)\longrightarrow E.
\]
\end{lemma}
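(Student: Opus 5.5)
The plan is to compute everything inside the free simplicial group \(U=F*\langle y\rangle\) and then pass to lower central quotients.

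\textbf{Step 1: \(H\) as a free group.} With \(d_0(y)=1\) and \(d_0(x_i)=x_i\), the map \(d_0\) retracts \(U\) onto \(F\). So \(H=\ker d_0\) is the normal closure of \(y\) in \(U\). It is free on the conjugates \(y^{w}\), \(w\in F\). The face \(d_1\) sends \(y^w\mapsto r^w\), so \(d_1(H)=R\).

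\textbf{Step 2: the identity \(Q_n=[R,{}_nF]\).} Since \(U=H\rtimes F\) and \(H\) is normal, the standard commutator calculus gives
\[
H\cap\gamma_{n+1}U=\prod_{a+b=n+1,\ a\ge1}\gamma\text{-type products }[H,{}_{a-1}H,{}_bF],
\]
and more simply \(H\cap\gamma_{n+1}U=[H,{}_nU]\,\cdot\,(\text{terms of higher }H\text{-degree})\). The plan is to show by induction on \(n\) that this intersection is generated, modulo \([H,H]\cap\gamma_{n+1}U\), by \([y^w,{}_nU]\). Applying \(d_1\) then gives:

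\(\supseteq\): every \([r,f_1,\dots,f_n]\) lifts to \([y,f_1,\dots,f_n]\), which lies in \(H\cap\gamma_{n+1}U\);

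\(\subseteq\): \(d_1\) maps brackets with at least one \(H\)-entry and \(n\) further entries into \([R,{}_nF]\). This uses \([R,R]\subseteq[R,{}_nF]\)? No — so the induction must place each bracket with two \(H\)-entries into \([R,{}_{n}F]\) via \([R,{}_aF,R,{}_bF]\subseteq[R,{}_{a+b+1}F]\). That inclusion follows from the three-subgroup lemma, because \(R\) is normal and \([R,{}_kF]\) is normal.

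\textbf{Step 3: the identification of \(C_n\).} Next I compute \(H_n=H/(H\cap\gamma_{n+1}U)\) and \(J_n\), and show that \(d_1\) induces \(C_n\to R/[R,{}_nF]\). Modding out \([H_n,J_n]\) is the Peiffer quotient. Lyndon asphericity, in the form "the free crossed module is \(R\hookrightarrow F\)", says \(d_1\) induces \(H/[H,J]\cong R\). I would pass this isomorphism to the quotient by \(Q_n\) using Step 2, checking that \([H_n,J_n]\) is exactly the image of \([H,J]\). The map \(C_n\to F_n\) then becomes \(R/[R,{}_nF]\to F/\gamma_{n+1}F\). Its kernel is \((R\cap\gamma_{n+1}F)/[R,{}_nF]=M_n(G)\) by \eqref{hopf}.

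\textbf{Step 4: abelianization \(C_n^{\rm ab}\cong A_n\).} We have \(C_n^{\rm ab}=R/[R,R][R,{}_nF]\), and \(R_{\rm ab}\) is the relation module. Because \(r\) is not a proper power, Lyndon's theorem makes \(R_{\rm ab}\cong\Z G\) as a free cyclic module on \(\bar r\). The image of \([R,{}_nF]\) is \(I^n R_{\rm ab}\), since \([\,\cdot\,,f]\) acts as \(f-1\) on \(R_{\rm ab}\). Hence \(C_n^{\rm ab}\cong\Lambda/I^n=A_n\).

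\textbf{Step 5: the maps \(\beta_n\) and \(\rho\).} Each \(\beta_n\) is the composite \(B_n\to C_n\to C_n^{\rm ab}\cong A_n\). Its image lies in \(E_n\) for the following reason. The relation sequence \(R_{\rm ab}\to\Lambda^d\), \(\bar r\mapsto(d_i)\), is the Fox map. Reducing mod \(I^n\), the composite \(C_n^{\rm ab}\to A_n^d\) is \(D_{r,n}\), and it factors through \(\gamma_{n+1}\)-quotients of \(F\). Elements of \(B_n\) die in \(F_n\), hence in \(F_n^{\rm ab}\)-type Fox images \(I^n\Lambda^d/\ldots\), so they land in the kernel. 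Compatibility with the transition maps is naturality in \(n\). Taking \(\limi\), composing with the Milnor edge map of Proposition \ref{derivedvanishing}, and using \eqref{completedfoxkernel} yields \(\rho\). Continuity holds because \(\rho\) is a limit of maps factoring through the discrete stages.

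I expect the main obstacle to be Step 5's kernel claim. One must show precisely that the Fox map mod \(I^n\) kills the image of \(R\cap\gamma_{n+1}F\) in \(R_{\rm ab}/I^nR_{\rm ab}\). I would prove this with the Magnus/Fox fact that \(w\in\gamma_{n+1}F\) forces \(\partial w/\partial x_i\in I_F^n\), together with the derivation rule \(\partial(ur)/\partial x=\partial u/\partial x+u\,\partial r/\partial x\). Some care will also be needed with the Peiffer quotient in Step 3.
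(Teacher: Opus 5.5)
Your proposal is correct and follows the paper's proof step for step: the semidirect decomposition of $\gamma_{n+1}U$ gives $Q_n$; Lyndon's crossed-module and relation-module identifications give $C_n$ and $C_n^{\rm ab}$ (the check you flag in Step 3 holds because both faces split over $F$, so $H\to H_n$ and $J\to J_n$ are onto); the Hopf formula gives $B_n$; and the fact that Fox derivatives of $\gamma_{n+1}F$ lie in $I_F^n$ gives $\beta_n$. Step 2 is simpler than you make it: the inclusion $[H,\gamma_jU]\subseteq[H,{}_jU]$ gives $\gamma_{n+1}U=[H,{}_nU]\rtimes\gamma_{n+1}F$, hence $H\cap\gamma_{n+1}U=[H,{}_nU]$ and, by surjectivity of $d_1$, $Q_n=d_1[H,{}_nU]=[R,{}_nF]$, with no detour through $[H,H]\cap\gamma_{n+1}U$.
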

\begin{proof}
For the splitting \(U=H\rtimes F\), commutator identities give
\[
\gamma_{n+1}U=[H,{}_nU]\rtimes\gamma_{n+1}F.
\]
One proves this by induction: the mixed commutators are absorbed by \([H,{}_nU]\), using \([H,\gamma_jU]\subseteq[H,{}_jU]\). Intersecting with \(H\) and applying \(d_1\), for which \(d_1H=R\) and \(d_1U=F\), proves the assertion about \(Q_n\).

Both faces split over \(F\). Thus \(H\to H_n\) and \(J\to J_n\) are onto, and
\[
C_n\cong H/\bigl([H,J](H\cap\gamma_{n+1}U)\bigr)\cong R/Q_n.
\]
The free cellular simplicial continuation has Moore boundaries \([H_n,J_n]\) in degree one, so its first homotopy group is \(\ker(C_n\to F_n)\). Equivalently, the Hopf formula gives this kernel as \(M_n(G)\).

Under the Lyndon identification \(R_{\rm ab}\cong\Lambda\), with \(\bar r\mapsto1\), the image of \([R,{}_nF]\) is \(I^nR_{\rm ab}\). This proves \(C_n^{\rm ab}\cong A_n\). We use the left action for which conjugation \(r\mapsto f r f^{-1}\) corresponds to multiplication by the image of \(f\) in \(G\).

If \(b\in B_n\) is represented by \(w\in R\cap\gamma_{n+1}F\), its coefficient \(a\in R_{\rm ab}\cong\Lambda\) satisfies
\[
\overline{\partial w/\partial x_i}=a d_i.
\]
Fox derivatives of \(\gamma_{n+1}F\) lie in the \(n\)th power of the augmentation ideal of \(\Z F\). Hence \(a d_i\in I^n\), proving \eqref{betamaps}. All constructions commute with quotient maps. Continuity of \(\rho\) follows from its coordinate maps on Moore cycles.
\end{proof}

\begin{remark}\label{finitekernelremark}
Let \(R^{(n)}=R\gamma_{n+1}F/\gamma_{n+1}F\). The extension
\(1\to B_n\to C_n\to R^{(n)}\to1\) is central, and its five-term homology sequence gives
\[
\ker\beta_n=B_n\cap[C_n,C_n]
=\im\bigl(H_2(R^{(n)};\Z)\longrightarrow B_n\bigr).
\]
The comparison need not be an isomorphism at a finite stage. Our theorem concerns the inverse limit of these particular maps.
\end{remark}

\subsection{Finite coefficients and compactness}
Each \(A_n\) is a finitely generated abelian group: monomials of length less than \(n\) in \(x_i-1\) span it, since inverse generators have finite geometric expansions modulo \(I^n\). The same is true of \(E_n\).

\begin{lemma}\label{groupalgebracomparison}
For any finitely generated group \(G\) and every prime \(p\), there are natural topological ring isomorphisms
\begin{equation}\label{ringlocalcomparison}
\limi_n(A_n\otimes_\Z\Z_p)
\cong\limi_{n,k}\Lambda/(I^n+p^k\Lambda)
\cong\Z_p[[G_p]],
\end{equation}
where \(G_p=G_p^\wedge\), and the tensor factors on the left have their \(p\)-adic topologies.
\end{lemma}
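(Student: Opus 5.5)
We prove the two isomorphisms in \eqref{ringlocalcomparison} separately, writing \(\Lambda=\Z G\), \(I\) for the augmentation ideal and \(A_n=\Lambda/I^n\).

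\textbf{First isomorphism.} Each \(A_n\) is a finitely generated abelian group, so \(A_n\otimes_\Z\Z_p\cong\limi_k A_n/p^kA_n\) as \(p\)-adic completion. Also \(A_n/p^kA_n=\Lambda/(I^n+p^k\Lambda)\). The double limit over \((n,k)\) can be computed by first taking \(k\) and then \(n\), since the indexing poset is a product of two countable chains and the diagonal is cofinal. This gives the first isomorphism, naturally in \(G\), as rings and as topological spaces: both sides carry the inverse-limit topology of the discrete quotients \(\Lambda/(I^n+p^k\Lambda)\).

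\textbf{Second isomorphism.} Let \(G_p=G_p^\wedge\). We identify \(\limi_{n,k}\Lambda/(I^n+p^k\Lambda)\) with \(\Z_p[[G_p]]=\limi_{N,k}(\Z/p^k)[G/N]\), where \(N\) runs over normal subgroups of \(p\)-power index. We show that the two families of two-sided ideals \(I^n+p^k\Lambda\) and \(\ker(\Lambda\to(\Z/p^k)[G/N])\) are mutually cofinal.

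For the first direction, fix \(N\) and \(k\), and let \(P=G/N\), a finite \(p\)-group. The augmentation ideal \(\mathfrak a\) of \(\F_p[P]\) is nilpotent, say \(\mathfrak a^m=0\). In \((\Z/p^k)[P]\) the augmentation ideal \(\mathfrak a_k\) then satisfies \(\mathfrak a_k^m\subseteq p(\Z/p^k)[P]\). Since \(p\) is central, \(\mathfrak a_k^{mk}\subseteq p^k(\Z/p^k)[P]=0\). Hence \(I^{mk}\) maps to zero in \((\Z/p^k)[P]\), so \(I^{mk}+p^k\Lambda\) lies in the kernel.

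For the converse, fix \(n\) and \(k\). The ring \(B=\Lambda/(I^n+p^k\Lambda)\) is finite, because \(A_n\) is finitely generated. Its augmentation ideal \(\bar I\) satisfies \(\bar I^n=0\). The image of \(G\) lies in \(1+\bar I\), which is a finite group under multiplication. That group is a \(p\)-group: for \(x\in\bar I\), \((1+x)^{p^j}\) expands binomially with terms \(\binom{p^j}{i}x^i\), and for large \(j\) every such term with \(i<n\) is divisible by \(p^k\), while terms with \(i\ge n\) vanish. So the kernel \(N\) of \(G\to B^\times\) has \(p\)-power index, and \(\Lambda\to B\) factors through \(\Z[G/N]\), hence through \((\Z/p^k)[G/N]\). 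This gives the reverse containment of ideals.

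Mutual cofinality yields a ring isomorphism of the inverse limits, and it is topological because the same cofinal system of open ideals defines both topologies. Naturality in \(G\) holds because all maps are induced by \(\Lambda\).

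The main obstacle is the second isomorphism, specifically the binomial \(p\)-group argument and the nilpotence bound for the augmentation ideal in \((\Z/p^k)[P]\). These are standard but need care with \(p\)-power divisibility of binomial coefficients. Finite generation of \(G\) is used only to make \(A_n\) finitely generated, which is needed for the \(p\)-adic completion step and for the finiteness of \(B\).
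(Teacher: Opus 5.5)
Your proposal is correct and follows the paper's proof. The first isomorphism comes from finite generation of \(A_n\). The second comes from mutual cofinality of the ideals \(I^n+p^k\Lambda\) and \(\ker(\Lambda\to(\Z/p^k)[G/N])\): the group \(1+\bar I\) in the finite ring \(\Lambda/(I^n+p^k\Lambda)\) is a finite \(p\)-group, and the augmentation ideal of \((\Z/p^k)[P]\) is nilpotent. Your binomial-coefficient argument and the explicit bound \(\mathfrak a_k^{mk}=0\) spell out what the paper states in one line each.
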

\begin{proof}
Finite generation of \(A_n\) identifies \(A_n\otimes\Z_p\) with \(\lim_k A_n/p^kA_n\). In the finite ring \(\Lambda/(I^n+p^k\Lambda)\), the image of \(I\) is nilpotent and the group \(1+I\) is a finite \(p\)-group. Thus the map from \(G\) factors through a finite \(p\)-group quotient. Conversely, for a finite \(p\)-group \(P\), the augmentation ideal of \((\Z/p^k)[P]\) is nilpotent: it is nilpotent modulo \(p\), and \(p^k=0\). The two systems of finite ring quotients are therefore cofinal. This proves the second isomorphism as well as its topological and natural character.
\end{proof}

\begin{proposition}\label{foxML}
The tower \((E_n)\) is Mittag--Leffler and
\[
\limone_n E_n=0,\qquad E=\limi_n\operatorname{Tor}(E_n).
\]
For \(T_p:=\limi_n\operatorname{Tor}_p(E_n)\), the natural maps give topological isomorphisms
\begin{equation}\label{foxlocaliso}
E\cong\prod_pT_p,\qquad
T_p\cong\ker\bigl(D_{r,p}:\Z_p[[G_p]]\longrightarrow\Z_p[[G_p]]^d\bigr).
\end{equation}
In particular, \(E\) is profinite and torsion-free. No restriction on the exponent-sum vector of \(r\) is required.
\end{proposition}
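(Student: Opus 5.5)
The plan follows the rational-to-integral pattern of Lemma~\ref{rationalML} and Proposition~\ref{derivedvanishing}, now applied to the finitely generated tower \((E_n)\). First, the rational limit vanishes. Tensoring with \(\Q\) is exact, so \(E_n\otimes\Q=\ker(D_{r,n}\otimes\Q)\) on \(A_n\otimes\Q\). The completion \(\limi_n(A_n\otimes\Q)\) is the completed rational group algebra of the Malcev completion of \(G\), which is \(\mathcal O(H)^*\) for the prounipotent \(H=F/(r)\) of Proposition~\ref{rataspherical}. Indeed, \(\Q G/I^n\) is the corresponding quotient of \(\mathcal O(H)^*\), by the universal property of unipotent completion on the finite-dimensional quotients. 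The first identification uses only finite-dimensionality of each \(A_n\otimes\Q\). Kernels commute with limits, so \(\limi_n(E_n\otimes\Q)\) is the kernel of right multiplication by \((\overline{D_i(r)})_i\) on \(\Lambda\). The proof of Proposition~\ref{rataspherical} shows this kernel is zero: it uses Lemma~\ref{ratdomain} together with the nonvanishing of some \(\overline{D_i(r)}\) by lowest-degree considerations. Lemma~\ref{rationalML} then gives the Mittag--Leffler property, \(\limone_nE_n=0\), and \(E=\limi_n\operatorname{Tor}(E_n)\).

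Second, split into primes. Each \(\operatorname{Tor}(E_n)\) is finite and equals \(\bigoplus_p\operatorname{Tor}_p(E_n)\), with only finitely many nonzero summands. Transition maps preserve \(p\)-primary parts. Hence
\[
E=\limi_n\prod_p\operatorname{Tor}_p(E_n)=\prod_p T_p,
\]
since limits commute with products. This is a limit of finite groups, so \(E\) is profinite, and the product topology matches the limit topology. Torsion-freeness of \(E\) will follow from the identification of \(T_p\) below.

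Third, identify each \(T_p\). By Lemma~\ref{groupalgebracomparison}, \(\Z_p[[G_p]]=\limi_n(A_n\otimes\Z_p)\). Since \(\Z_p\) is flat, \(\ker D_{r,p}=\limi_n(E_n\otimes\Z_p)\). Write \(E_n\cong\Z^{a_n}\oplus\operatorname{Tor}(E_n)\); then
\[
E_n\otimes\Z_p\cong\Z_p^{a_n}\oplus\operatorname{Tor}_p(E_n).
\]
The key point is that the free part is pro-zero in the tower. By the first step, for each \(n\) there is \(m\ge n\) such that the image of \(E_m\) lies in \(\operatorname{Tor}(E_n)\). Then the image of \(E_m\otimes\Z_p\) lies in \(\operatorname{Tor}(E_n)\otimes\Z_p=\operatorname{Tor}_p(E_n)\). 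So the towers \((E_n\otimes\Z_p)\) and \((\operatorname{Tor}_p(E_n))\) are pro-isomorphic, and
\[
\limi_n(E_n\otimes\Z_p)\cong T_p
\]
topologically. Finally, \(T_p\) is a closed submodule of \(\Z_p[[G_p]]\), which is \(\Z_p\)-torsion-free. On \(T_p\), multiplication by primes \(\ell\ne p\) is invertible by the pro-\(p\) structure. Hence \(\prod_pT_p\) is torsion-free.

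The main obstacle is the first step: identifying \(\limi_n(A_n\otimes\Q)\) with the algebra \(\Lambda\) of Proposition~\ref{rataspherical}, so that Lemma~\ref{ratdomain} applies. I would establish this by comparing both sides with \(\Q\langle\!\langle T_1,\dots,T_d\rangle\!\rangle\) modulo the closed ideal generated by \(r-1\), level by level in the \(I\)-adic filtration. Nothing about exponent sums enters, because the domain argument only needs \(r\ne1\).
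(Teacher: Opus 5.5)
Your proposal is correct and follows the paper's proof: rational injectivity of the completed Fox operator via Lemma~\ref{ratdomain}, Lemma~\ref{rationalML} for the Mittag--Leffler property and the reduction to torsion, flatness of \(\Z_p\) with Lemma~\ref{groupalgebracomparison} for the local kernel, and \(\Z_p\)-torsion-freeness of \(\Z_p[[G_p]]\). The only cosmetic difference is in identifying \(\limi_n(E_n\otimes\Z_p)\) with \(T_p\): you use a pro-isomorphism of towers, whereas the paper shows every coordinate lies in \(\operatorname{Tor}_p(E_n)\) and then invokes a continuous bijection from a compact space to a Hausdorff one.
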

\begin{proof}
The algebra \(\lim_n(A_n\otimes\Q)\) is the complete algebra of the rational prounipotent completion of \(G\). This follows either from the complete Hopf-algebra correspondence or from the quotient description used in Proposition~\ref{rataspherical}: completion of \(\Q G\) is the noncommutative power-series algebra modulo the closed ideal generated by \(r-1\). The proof of that proposition shows that the Fox operator on this algebra is injective. Flatness of \(\Q\) and preservation of kernels by inverse limits yield
\[
\limi_n(E_n\otimes\Q)=0.
\]
Lemma~\ref{rationalML} applies. For every fixed \(n\), a sufficiently late map \(E_m\to E_n\) has finite image in \(\operatorname{Tor}(E_n)\); hence every element of \(E\) has torsion coordinates.

Flatness of \(\Z_p\) and Lemma~\ref{groupalgebracomparison} identify \(\lim_n(E_n\otimes\Z_p)\) with \(\ker D_{r,p}\). The finite-image property just proved implies that every coordinate of a compatible family in this limit lies in \(\operatorname{Tor}_p(E_n)\). Thus the natural map \(T_p\to\ker D_{r,p}\) is a continuous bijection from a compact space to a Hausdorff space, and is a homeomorphism. Primary decomposition of each finite group \(\operatorname{Tor}(E_n)\) proves the product formula.

Finally, \(\Z_p[[G_p]]=\lim_V\Z_p[G_p/V]\) has no \(p\)-torsion, since each term is free over \(\Z_p\). It follows that \(T_p\) is torsion-free over \(\Z_p\), and that \(\prod_pT_p\) has no nonzero element of finite order.
\end{proof}

\begin{remark}
Even without the rational injectivity argument, finite generation and flatness give a natural injection
\[
\limi_n\operatorname{Tor}_p(E_n)\hookrightarrow\limi_n(A_n\otimes\Z_p)
\cong\Z_p[[G_p]].
\]
Thus absence of \(p\)-torsion in \(T_p\) does not depend on the arithmetic square. The square will identify these algebraic groups with the local homotopy groups.
\end{remark}

\subsection{The natural topological comparison}
We specify the topology on \(\Pi=\pi_2(\Z_\infty K)\) by its Moore-cycle quotient in the degreewise lower central completion \(\widehat P_\bullet=\lim_n\nu_nP_\bullet\), with discrete groups at each nilpotent stage.

\begin{lemma}\label{milnortopology}
For a countable tower of simplicial groups with degreewise surjective transitions and discrete stages, a bijective Milnor edge map
\[
\pi_1(\limi_n X_{n,\bullet})\longrightarrow\limi_n\pi_1(X_{n,\bullet})
\]
is a homeomorphism for the Moore quotient topology on the source. In our setting,
\begin{equation}\label{compactmilnor}
\Pi\cong_{\rm top}\limi_n\operatorname{Tor}(M_n(G)).
\end{equation}
The natural map \(\Pi\to\prod_p\pi_2(K_p^\wedge)\) is a homeomorphism as well.
\end{lemma}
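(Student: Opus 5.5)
The plan has three parts: the abstract homeomorphism, then identification \eqref{compactmilnor}, then the local comparison.

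\emph{Abstract homeomorphism.} Write \(X_\infty=\lim_nX_{n,\bullet}\). Each stage \(X_{n,\bullet}\) is discrete in every degree, so \(X_{\infty,m}\) carries the inverse-limit topology. The Moore cycles \(Z_1X_\infty=\lim_n Z_1X_n\) then form a closed subgroup, and the edge map is induced by the continuous projections \(Z_1X_\infty\to Z_1X_n\to\pi_1X_n\). Hence the edge map is continuous for the Moore quotient topology. For openness I will take a basic open neighbourhood of the identity in \(Z_1X_\infty\), namely the kernel \(V_n\) of the projection to \(Z_1X_n\). I will show that its image in \(\pi_1X_\infty\) contains the preimage of \(\ker(\lim_m\pi_1X_m\to\pi_1X_n)\). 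So let \(z\) be a cycle whose class dies in \(\pi_1X_n\). Then \(z_n=d_0(b_n)\) for some \(b_n\in N_2X_n\). Degreewise surjectivity lets me lift \(b_n\) to \(b\in N_2X_\infty\): first lift in \(X_{\infty,2}\), then correct into the Moore subgroup using the standard simplicial-group lifting, valid because the maps are fibrations. The cycle \(z\,d_0(b)^{-1}\) is homologous to \(z\) and lies in \(V_n\). This gives openness; bijectivity is assumed.

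\emph{Identification \eqref{compactmilnor}.} Proposition~\ref{derivedvanishing} together with Theorem~\ref{baermain} makes the edge map bijective. Lemma~\ref{rationalML}, applied to the finitely generated tower \(M_n(G)\), gives \(\lim_nM_n(G)=\lim_n\operatorname{Tor}(M_n(G))\). Its hypothesis \(\lim_n(M_n\otimes\Q)=0\) comes from rational asphericity: the Milnor sequence for the rational tower identifies this limit with a quotient of \(\pi_2(\Q_\infty BG)=0\). The inverse-limit topology on the compatible families is then the profinite topology of a limit of finite groups. So \(\Pi\) is compact Hausdorff.

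\emph{Local comparison.} The map \(\Pi\to\prod_p\pi_2(K_p^\wedge)\) is a bijection by \eqref{product}. It is continuous because it is induced by the continuous simplicial map \(\widehat P_\bullet\to\widehat P_{p,\bullet}\) on Moore cycles, which passes to quotients. The target is compact Hausdorff by Lemma~\ref{prophomotopy}. Since \(\Pi\) is compact, a continuous bijection from \(\Pi\) onto a Hausdorff space is a homeomorphism.

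The main obstacle is the openness step. There one needs boundaries in the finite stages to lift to boundaries in the limit, which means lifting elements of \(N_2\) compatibly. I will use the fibration property of the tower maps \(X_{n+1}\to X_n\), which are surjective homomorphisms of simplicial groups, so that surjectivity restricts to Moore complexes. I will lift one stage at a time and take the limit. The essential input is that the kernels at each stage are discrete, so no \(\lim^1\) issue arises for a single element chosen inductively.
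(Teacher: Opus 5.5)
Your proposal is correct and follows the paper's proof essentially step for step. You lift a normalized degree-two filling to the limit, so that the image of your \(V_n\) is exactly the kernel of \(\pi_1(\limi_m X_{m,\bullet})\to\pi_1(X_{n,\bullet})\); you use Theorem~\ref{baermain} and Lemma~\ref{rationalML} for the torsion identification and compactness; and you use the compact-to-Hausdorff argument for the local map. The one point you leave implicit is why \(\widehat P_\bullet\to\prod_p\widehat P_{p,\bullet}\) is continuous, which the paper justifies by noting that every finite \(p\)-group quotient of \(P_j\) factors through some \(\nu_nP_j\).
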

\begin{proof}
A degreewise surjective morphism of simplicial groups is surjective on normalized groups: a lift can be corrected by degeneracies, or by filling the corresponding horn in the simplicial kernel. Successive lifts show that the projection from the inverse limit is also surjective on normalized groups.

Write \(Z\) for the degree-one Moore cycles of the inverse limit, \(D\) for its Moore boundaries, and \(q:Z\to Z/D\) for the quotient. Let \(W_n=\ker(Z\to Z_1X_{n,\bullet})\). If a cycle is a boundary at stage \(n\), lift a normalized degree-two filling to the inverse limit and remove its boundary. This gives
\[
q(W_n)=\ker\bigl(\pi_1(\lim X_{n,\bullet})\longrightarrow\pi_1(X_{n,\bullet})\bigr).
\]
The groups \(W_n\) form a neighbourhood basis in \(Z\), and the quotient map \(q\) is open. Under the assumed bijection, these neighbourhoods give exactly the inverse-limit topology. In particular \(D\) is closed in \(Z\).

For \(X_{n,\bullet}=\nu_nP_\bullet\), Theorem~\ref{baermain} supplies the bijective edge map. Rational asphericity and the rational Milnor sequence also give \(\lim_n(M_n(G)\otimes\Q)=0\). Lemma~\ref{rationalML} therefore reduces the limit to its finite torsion subgroups, proving \eqref{compactmilnor}.

Every finite \(p\)-group quotient of \(P_j\) factors through some nilpotent quotient. Thus the degreewise comparison
\(\widehat P_\bullet\to\prod_p\widehat P_{p,\bullet}\) is continuous. Its homotopy map is bijective by Theorem~\ref{main}. The source is compact by \eqref{compactmilnor}; the target is Hausdorff, since Moore boundaries in simplicial pro-\(p\) groups are compact images. The map is consequently a homeomorphism.
\end{proof}

\begin{theorem}\label{foxtheorem}
The map \(\rho\) of Lemma~\ref{relativefox} is a canonical topological isomorphism
\begin{equation}\label{fullfoxcomparison}
\pi_2(\Z_\infty K)\xrightarrow[\rho]{\ \cong\ }
\ker\bigl(\widehat D_r:\widehat\Lambda_I\longrightarrow\widehat\Lambda_I^{\,d}\bigr).
\end{equation}
It is compatible with the action of \(G\), the Milnor edge map and the local Fox identifications. In particular the following diagram commutes, and every arrow is a topological isomorphism:
\[
\begin{tikzcd}[column sep=large]
\Pi\arrow[r,"\rho"]\arrow[d]&E\arrow[d]\\
\displaystyle\prod_p\pi_2(K_p^\wedge)\arrow[r]&\displaystyle\prod_p\ker D_{r,p}.
\end{tikzcd}
\]
\end{theorem}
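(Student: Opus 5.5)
The plan is to get \(\rho\) bijective from the commutative square, not by a direct stagewise argument. Remark~\ref{finitekernelremark} shows that \(\beta_n\) need not be an isomorphism at any finite stage, so stagewise injectivity is not available. Three arrows of the square are already known to be topological isomorphisms:
\begin{itemize}[nosep]
\item the left vertical map \(\Pi\to\prod_p\pi_2(K_p^\wedge)\), by Lemma~\ref{milnortopology};
\item the right vertical map \(E\to\prod_p\ker D_{r,p}\), by Proposition~\ref{foxML};
\item \(\Pi\) is compact by \eqref{compactmilnor}, and \(E\) is Hausdorff.
\end{itemize}
So once the square commutes, it suffices to show that the bottom arrow is a bijection. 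Then \(\rho\) is a continuous bijection from a compact space to a Hausdorff one, hence a homeomorphism.

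\textbf{Step 1: the bottom arrow.} I define it prime by prime. Lemma~\ref{relativefox} has a pro-\(p\) analogue: apply the same relative-relation-quotient construction to \(\widehat P_{p,\bullet}\). Abelianizing \(\overline R\le F^{(p)}\) and using the Fox derivatives over \(\Z_p[[G_p]]\) gives the map. Its bijectivity is exactly the content of the second exact sequence in the proof of Proposition~\ref{cdcriterion}:
\[
\pi_2(K_p^\wedge)=\ker\bigl(\Lambda_p\xrightarrow{1\mapsto\bar r}R_{\rm ab}\bigr).
\]
I then compose with the first sequence \(R_{\rm ab}\hookrightarrow\Lambda_p^d\), which is Fox differentiation \(\bar r\mapsto(d_i)\). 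This identifies \(\pi_2(K_p^\wedge)\) with \(\ker D_{r,p}\). Here \(\Lambda_p=\Z_p[[G_p]]\) and \(d_i\) is read in \(\Lambda_p\).

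One point needs care. The relation sequence is used only up to a kernel: the element killed is the identity element \(\lambda\) with \(\lambda\bar r=0\) in \(R_{\rm ab}\), and injectivity of \(R_{\rm ab}\to\Lambda_p^d\) turns this into \(\lambda d_i=0\) for all \(i\). So the bijection does not require \(R_{\rm ab}\) to be free.

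\textbf{Step 2: commutativity.} I check it on Moore cycles. Take a degree-one Moore cycle of \(\widehat P_\bullet\), represented stagewise by \(w\in R\cap\gamma_{n+1}F\). Then \(\rho\) sends it to the coefficient \(a\in A_n\) with \(\overline{\partial w/\partial x_i}=a\,d_i\).

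Going the other way round, the degreewise map \(\widehat P_\bullet\to\widehat P_{p,\bullet}\) sends \(w\) to its image in \(F^{(p)}\). The local identification takes its class in \(\overline R_{\rm ab}\) and then its Fox vector in \(\Lambda_p^d\). Fox derivatives are natural under the coefficient maps
\[
\Z F\to A_n\to A_n\otimes\Z_p\to\Z_p[[G_p]],
\]
the last step by Lemma~\ref{groupalgebracomparison}. The two images therefore agree after passing to the inverse limit over \(n\).

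\textbf{Step 3: \(G\)-equivariance.} Conjugation of \(w\) by \(f\) multiplies \(a\) on the left by the image of \(f\); this is the convention of Lemma~\ref{relativefox}. The action on \(\pi_2\) through \(\pi_1\) is induced by the same conjugation on Moore cycles.

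\textbf{Main obstacle.} The hardest step is Step 1: making the local identification canonical and matching it with the cellular coefficient maps. Specifically, one must show that the kernel of \(\Lambda_p\to R_{\rm ab}\) in Proposition~\ref{cdcriterion} is the \emph{same} map as the abelianized relative quotient, which holds in the pro-\(p\) version of the lemma where \(C_{n,p}\to R\). My first attempt is to take the pro-\(p\) version of \(C_n\) in the limit over \(n\). The point to verify is that \(\overline{[H,J]}\) in \(\widehat P_{p,1}\) gives the closed crossed module \(\overline R\hookrightarrow F^{(p)}\), modulo identities. This holds because \(\pi_2\) is precisely the kernel of that crossed-module boundary, so no pro-\(p\) Lyndon theorem is needed.
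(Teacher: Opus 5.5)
Your architecture is the paper's. The two vertical maps and the bottom map are isomorphisms, commutativity of the square forces \(\rho\) to be bijective, and continuity of \(\rho\), compactness of \(\Pi\) and the Hausdorff property of \(E\) give the homeomorphism. Your Step~1 is also one of the paper's two justifications of the bottom arrow.

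\textbf{The gap is Step~2.} This is where the real content lies, not Step~1 as you suggest. The local identification sends the image \(z_p\in\widehat P_{p,1}\) of a Moore cycle \(z\) to its \(y\)-coefficient \(\lambda\in\Z_p[[G_p]]\), that is, its image in the abelianized free pro-\(p\) crossed module. That coefficient is invisible in degree zero: \(d_1(z_p)=1\), and \(\Z_p[[G_p]]\to\overline R_{\rm ab}\), \(1\mapsto[r]\), has kernel exactly \(\pi_2(K_p^\wedge)\).

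You push the stagewise boundary \(w_n\in R\cap\gamma_{n+1}F\) into \(F^{(p)}\), take its class in \(\overline R_{\rm ab}\), and Fox-differentiate. This gives \(D_{r,p}\) applied to a coefficient, not the coefficient itself. These Fox vectors lie in \(I^n\) and vanish in the limit on both sides. Moreover \(D_{r,p}\) is not injective; its kernel is the very group being matched. So the agreement you assert carries no information.

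Passing through \(w_n\) is legitimate on the integral side only because Lyndon's theorem makes \(C\to R\) injective. Locally, \(C_p\to\overline R\) has kernel \(\pi_2(K_p^\wedge)\). For the same reason, a pro-\(p\) version of Lemma~\ref{relativefox} cannot identify \(C_{n,p}\) with \(\overline R/\overline{[R,{}_nF]}\).

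There is a second, separate omission: you never relate the stagewise data to the local image of the limit cycle itself. A lift \(h_n\in H\) of \(z_n\) differs from \(z\) by an element of \(\ker(\widehat H\to H_n)\). You must know that the local coefficient modulo \((I^n,p^k)\) kills such elements.

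\textbf{The missing ingredient is a coefficient map on the degree-one group.} The paper uses
\(\theta:H\to\Lambda\), \(\prod_jf_jy^{\epsilon_j}f_j^{-1}\mapsto\sum_j\epsilon_j\bar f_j\).
\begin{itemize}
\item Its reduction modulo \(I^n\) is the integral coordinate, and it factors through \(H_n\) by Lemma~\ref{relativefox}.
\item The local coefficient of the image of the same \(h\) is the image of \(\theta(h)\) in \(\Z_p[[G_p]]\).
\item Hence the two coefficient maps agree on \(H\) modulo every \((I^n,p^k)\), via the reduction \(\widehat\Lambda_I\to\Z_p[[G_p]]\) of Lemma~\ref{groupalgebracomparison}.
\item Since \(H\to H_n\) is onto, \(H\) is dense in \(\widehat H=\limi_nH_n\), and both composites \(\widehat H\to\Lambda/(I^n+p^k\Lambda)\) are continuous.
\item So they agree on all of \(\widehat H\), in particular on Moore cycles. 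This is exactly the commutativity of the square.
\end{itemize}
Equivariance then also follows from the formula for \(\theta\).
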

\begin{proof}
The local coefficient complex of Lemma~\ref{kernelhomology} identifies \(\pi_2(K_p^\wedge)\) with \(\ker D_{r,p}\); the same identification follows from the two relation and identity sequences in Proposition~\ref{cdcriterion}. By Lemma~\ref{milnortopology} and Proposition~\ref{foxML}, three sides of the displayed diagram are already topological isomorphisms. It remains to verify the diagram for the specified map \(\rho\).

Let \(\theta:H\to\Lambda\) be the coefficient homomorphism obtained from \(H\to R\to R_{\rm ab}\cong\Lambda\). Explicitly, if
\[
h=\prod_{j=1}^{t}f_jy^{\epsilon_j}f_j^{-1},\qquad f_j\in F,\quad\epsilon_j=\pm1,
\]
then \(\theta(h)=\sum_j\epsilon_j\bar f_j\), with \(\bar f_j\) the image in \(G\). This is independent of the expression for \(h\). The integral comparison reduces this coefficient modulo \(I^n\); Lemma~\ref{relativefox} shows that it factors through \(H_n\). These maps extend to \(\widehat H=\ker(d_0:\widehat U\to\widehat F)\) with values in \(\widehat\Lambda_I\).

The local coefficient map is obtained by abelianizing the free pro-\(p\) crossed module. Its value on the same \(h\) is the image of \(\sum_j\epsilon_j\bar f_j\) in \(\Z_p[[G_p]]\). There is a natural continuous ring map \(\widehat\Lambda_I\to\Z_p[[G_p]]\): at every finite coefficient quotient \(\Lambda/(I^n+p^k\Lambda)\) it is just reduction. Hence the integral and local coefficient maps agree on \(H\) after reduction modulo every \((I^n,p^k)\).

The projections \(H\to H_n\) are onto, so \(H\) is dense in \(\widehat H=\lim_n H_n\). Both maps to each finite coefficient quotient are continuous, and therefore agree on \(\widehat H\). Passing to all finite quotients proves their equality in \(\Z_p[[G_p]]\). Restricting to Moore cycles and then taking homotopy classes yields exactly the asserted square: on the integral side the coordinate is \(\beta_n\), and on the local side it is the Fox coefficient of the local cycle. Both kill Moore boundaries, since they factor through the respective crossed-module abelianizations.

The diagram proves bijectivity of \(\rho\). Its continuity was established in Lemma~\ref{relativefox}; compactness of \(\Pi\) and the Hausdorff property of \(E\) give the topological assertion. Equivariance holds on \(H\) by the coefficient formula and passes through the same comparisons.
\end{proof}

By the relation and identity sequences in Proposition~\ref{cdcriterion}, the local module \(T_p\) is also the kernel of the canonical surjection \(\Z_p[[G_p]]\to N_p/\overline{[N_p,N_p]}\), where \(N_p\) is the closed normal subgroup generated by \(r\) in \(F_d^{(p)}\); this surjection sends \(1\) to \([r]\).

\begin{corollary}\label{transgressionvanishing}
Put
\[
\mathcal K_n=\ker\bigl(\beta_n|_{\operatorname{Tor}(B_n)}\bigr),\qquad
S_n=\beta_n(\operatorname{Tor}(B_n)).
\]
Then
\[
\limi_n\mathcal K_n=0,\qquad\limi_n S_n=E,
\qquad\limi_n\operatorname{Tor}_p(S_n)=T_p.
\]
Moreover, the tower \((\mathcal K_n)\) is pro-zero: for every \(n\) some transition \(\mathcal K_m\to\mathcal K_n\) is zero.
\end{corollary}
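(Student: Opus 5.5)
We work with the short exact towers
\[
0\longrightarrow\mathcal K_n\longrightarrow\operatorname{Tor}(B_n)\xrightarrow{\ \beta_n\ }S_n\longrightarrow0 .
\]
All three terms are finite, since \(B_n=M_n(G)\) and \(E_n\) are finitely generated abelian. The transition maps of \((B_n)\) and \((E_n)\) send torsion to torsion. By compatibility of the \(\beta_n\), this gives towers of finite groups.

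\textbf{Step 1: the limits of the middle and right terms.} Finite towers are Mittag--Leffler, so \(\limone\) vanishes on all three towers, and \(\limi\) is exact on the sequence. Next we identify the limit of the middle term. By Lemma~\ref{milnortopology}, specifically \eqref{compactmilnor}, \(\limi_n\operatorname{Tor}(B_n)=\limi_nM_n(G)\cong\Pi\). Under this identification \(\limi_n\beta_n\) is exactly \(\rho\) of Lemma~\ref{relativefox}, restricted to torsion coordinates. Every compatible family lies in the torsion subgroups, by Lemma~\ref{rationalML} and Proposition~\ref{foxML}. Theorem~\ref{foxtheorem} says \(\rho\) is an isomorphism onto \(E\), and Proposition~\ref{foxML} gives \(E=\limi_n\operatorname{Tor}(E_n)\). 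Exactness of \(\limi\) on the finite sequence therefore gives
\[
0\longrightarrow\limi\mathcal K_n\longrightarrow\Pi\xrightarrow{\ \rho\ }\limi S_n\longrightarrow0 .
\]
Here \(\limi S_n\subseteq\limi\operatorname{Tor}(E_n)=E\), using left exactness. Since \(\rho\) is injective, \(\limi\mathcal K_n=0\). Since \(\rho\) is surjective onto \(E\) and factors through \(\limi S_n\subseteq E\), we get \(\limi S_n=E\).

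\textbf{Step 2: the local statement.} Take \(p\)-primary parts. On finite abelian groups the functor \(\operatorname{Tor}_p\) is exact, being a direct summand. Hence \(\limi\operatorname{Tor}_p(S_n)\) is the \(p\)-component of \(\limi S_n=E\) under the primary decomposition in \eqref{foxlocaliso}. That component is \(T_p\).

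\textbf{Step 3: pro-zero, the main obstacle.} A tower of finite groups with zero inverse limit is pro-zero, and this is the key point. Fix \(n\). The images \(\im(\mathcal K_m\to\mathcal K_n)\) form a descending chain in a finite group, so they stabilize at some \(I_n\). The stable images form a tower with surjective transitions. Because each \(\mathcal K_m\) is finite, every element of \(I_n\) lifts compatibly through the stable images, by a compactness/König argument or by successive choice as in Lemma~\ref{rationalML}. So \(I_n\) is the image of \(\limi\mathcal K_m=0\), hence \(I_n=0\). Thus some transition \(\mathcal K_m\to\mathcal K_n\) is zero.

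The delicate point is Step 1, not Step 3. It requires that \(\limi\beta_n\) on the torsion towers is precisely \(\rho\) under the edge isomorphism of Lemma~\ref{milnortopology}. This holds because \(\rho\) was defined coordinatewise by the \(\beta_n\) and all coordinates are torsion, but it needs the explicit check.
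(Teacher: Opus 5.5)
Your proposal is correct and follows essentially the same route as the paper. You use exactness of inverse limits on the finite towers, identify the middle limit with \(\Pi\) via \eqref{compactmilnor} and Theorem~\ref{foxtheorem}, take primary decomposition for \(T_p\), and show pro-zero by noting that stable images of a finite tower are images of its (zero) inverse limit. The compatibility you flag as delicate is immediate, because Lemma~\ref{relativefox} defines \(\rho\) as the Milnor edge map followed by \(\limi_n\beta_n\).
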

\begin{proof}
The finite groups in
\(0\to\mathcal K_n\to\operatorname{Tor}(B_n)\to S_n\to0\)
have exact inverse limits. Apply \eqref{compactmilnor} and Theorem~\ref{foxtheorem}; primary decomposition gives the last equality. A tower of finite groups has stabilizing images, and its stable images are the images of its inverse limit. Here those images are zero.
\end{proof}

\begin{remark}
Since \(E\) is torsion-free, every nonzero compatible family of torsion coordinates has unbounded coordinate orders: bounded orders would make the family itself torsion.
\end{remark}

\subsection{Homology, exponent sums, and lifting obstructions}
Lyndon's resolution, with the left-module convention fixed above, is
\[
0\longrightarrow\Lambda\xrightarrow{D_r}\Lambda^d
\xrightarrow{(x_i-1)_i}\Lambda\xrightarrow{\varepsilon}\Z\longrightarrow0.
\]
Tensoring this finite free resolution with right coefficient modules gives
\begin{equation}\label{foxhomology}
E_n\cong H_2(G;A_n)\cong H_1(G;I^n),\qquad
E\cong\operatorname{Tor}^{\Lambda}_2(\widehat\Lambda_I,\Z).
\end{equation}
The middle isomorphism follows from \(0\to I^n\to\Lambda\to A_n\to0\); the last is computed directly from the displayed resolution.

\begin{proposition}\label{exponentbound}
Let \(e_i=\varepsilon(d_i)\) be the exponent sum of \(x_i\) in \(r\). If \(e=(e_i)\ne0\), put \(g=\gcd(e_1,\ldots,e_d)>0\). Then
\[
g^{\,n-1}E_n=0\quad(n\ge1).
\]
In particular, every \(E_n\) is finite, \(T_p=0\) for \(p\nmid g\), and \(g=1\) implies \(\pi_j(\Z_\infty K)=0\) for every \(j\ge2\).
\end{proposition}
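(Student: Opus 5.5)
The plan is to bound the exponent directly from the augmentation filtration of \(\Lambda\), arguing on a representative \(a\in\Lambda\) of a class in \(E_n\). The hypothesis is that \(a d_i\in I^n\) for every \(i\). The key elementary fact is
\[
d_i-e_i\in I,
\]
because \(e_i=\varepsilon(d_i)\). Consequently, if \(a\in I^k\), then \(a(d_i-e_i)\in I^{k+1}\), that is,
\[
a d_i\equiv e_i a \pmod{I^{k+1}}.
\]
I will push \(a\) up the filtration one step at a time, paying a factor of \(g\) at each step after the first.

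\emph{Step 1 (first step, free).} Apply \(\varepsilon\) to \(a d_i\in I^n\subseteq I\), using \(n\ge1\). This gives \(\varepsilon(a)e_i=0\) for all \(i\). Since \(e\ne0\) and \(\Z\) has no zero divisors, \(\varepsilon(a)=0\), so \(a\in I\). This settles \(n=1\): then \(E_1=0=g^0E_1\).

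\emph{Step 2 (induction, \(1\le k\le n-1\)).} Suppose \(c\in I^k\) satisfies \(c d_i\in I^n\subseteq I^{k+1}\) for all \(i\). By the congruence above, \(e_i c\in I^{k+1}\) for all \(i\). Choose a Bézout relation \(g=\sum_i u_ie_i\) with \(u_i\in\Z\); then \(g c\in I^{k+1}\). The element \(gc\) still satisfies \((gc)d_i=g(cd_i)\in I^n\), so the hypothesis is preserved. Starting from \(c=a\in I\) and iterating for \(k=1,\dots,n-1\) gives \(g^{n-1}a\in I^n\). This is exactly \(g^{n-1}E_n=0\). The only delicate point is to gain the first filtration step for free via the augmentation, so that the exponent is \(n-1\) rather than \(n\); I expect no further obstacle.

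\emph{Step 3 (consequences).} Each \(E_n\) is finitely generated, as noted before Lemma~\ref{groupalgebracomparison}, and it is killed by \(g^{n-1}\). Hence \(E_n\) is finite. If \(p\nmid g\), then \(\operatorname{Tor}_p(E_n)=0\) for all \(n\), so \(T_p=0\). If \(g=1\), then every \(E_n\) vanishes, so \(E=0\), and Theorem~\ref{foxtheorem} gives \(\pi_2(\Z_\infty K)=0\). For \(j\ge3\), use Theorem~\ref{main}, which gives \(\pi_j(Z_K)\cong\prod_p\pi_j(K_p^\wedge)\). By Theorem~\ref{foxtheorem} together with \eqref{foxlocaliso}, \(\pi_2(K_p^\wedge)\cong\ker D_{r,p}\cong T_p=0\) for every \(p\). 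The vanishing criterion of Theorem~\ref{mannanmain} in the pro-\(p\) case then gives \(\pi_j(K_p^\wedge)=0\) for all \(j\ge2\). Therefore \(\pi_j(\Z_\infty K)=0\) for every \(j\ge2\).
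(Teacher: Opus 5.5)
Your proof is correct and follows the paper's argument. The paper also uses the augmentation to get \(\varepsilon(a)=0\) and a B\'ezout combination \(\sum_i c_id_i=g+v\) with \(v\in I\), but it compresses your filtration induction into the single identity \(ga=-av\) in \(A_n\), which gives \(g^{n-1}a=\pm av^{n-1}\in I^n\). For the local vanishing when \(g=1\), the paper reads \(\pi_2(K_p^\wedge)=0\) off the product decomposition of Theorem~\ref{main} rather than from \(T_p=0\) and the local Fox identification, and either route works.
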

\begin{proof}
For \(a\in E_n\), augmentation gives \(\varepsilon(a)e_i=0\) for every \(i\). Hence \(\varepsilon(a)=0\). Choose integers \(c_i\) with \(\sum_i c_ie_i=g\), and write \(\sum_i c_id_i=g+v\), where \(v\in I/I^n\). Then \(av=-ga\). Since \(a\in I/I^n\), one has \(av^{n-1}=0\), proving the bound, including \(E_1=0\).

Finite generation now implies finiteness. If \(p\nmid g\), every \(p\)-primary coordinate group is zero. If \(g=1\), all \(E_n\) vanish, so Theorem~\ref{foxtheorem} gives \(\pi_2(\Z_\infty K)=0\). Theorem~\ref{main} first gives \(\pi_2(K_p^\wedge)=0\) for every \(p\); Theorem~\ref{mannanmain} gives vanishing of all local higher homotopy groups, and Theorem~\ref{main} gives the integral conclusion.
\end{proof}

If \(p\nmid g\), the image of \(r\) in the Frattini quotient of \(F_d^{(p)}\) is nonzero. It extends to a free pro-\(p\) basis, so \(G_p\) is free; Proposition~\ref{cdcriterion} gives the same local vanishing.

\begin{proposition}\label{foxobstruction}
Set \(L_n=I^n/I^{n+1}\). There is a natural exact sequence of abelian groups
\begin{equation}\label{obstructionsequence}
0\longrightarrow H_2(G;L_n)\longrightarrow E_{n+1}\longrightarrow E_n
\xrightarrow{\delta_n}G_{\rm ab}\otimes_\Z L_n.
\end{equation}
For \(a\in E_n\), choose a lift \(\widetilde a\in A_{n+1}\). Then
\[
\delta_n(a)=[(\widetilde a d_i)_i]\in
L_n^d/\{(e_i\ell)_i:\ell\in L_n\}
\cong G_{\rm ab}\otimes_\Z L_n.
\]
Thus \(a\) lifts to \(E_{n+1}\) precisely when \(\delta_n(a)=0\). If it lifts, its set of lifts is a torsor under
\[
H_2(G;L_n)=\ker\bigl(L_n\xrightarrow{\ell\mapsto(e_i\ell)_i}L_n^d\bigr).
\]
\end{proposition}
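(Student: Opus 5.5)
The plan is to derive \eqref{obstructionsequence} as the long exact homology sequence attached to a short exact sequence of coefficient modules, computed with Lyndon's resolution exactly as in \eqref{foxhomology}. Since \(L_n=I^n/I^{n+1}\), we have a short exact sequence of right \(\Lambda\)-modules
\[
0\longrightarrow L_n\longrightarrow A_{n+1}\longrightarrow A_n\longrightarrow0 .
\]
Tensor the length-two free resolution with this sequence. This gives a short exact sequence of three-term complexes \(M\to M^d\to M\), where the first map is \(a\mapsto(ad_i)_i\) and the second is \((a_i)\mapsto\sum_ia_i(x_i-1)\). The resolution has length two, so \(H_3(G;-)=0\). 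The long exact sequence then begins
\[
0\to H_2(G;L_n)\to H_2(G;A_{n+1})\to H_2(G;A_n)\xrightarrow{\delta}H_1(G;L_n)\to\cdots,
\]
and \(H_2(G;A_m)=E_m\) by \eqref{foxhomology}. Naturality in \(G\) and in \(n\) is inherited from the functoriality of the connecting map.

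Next, I will identify the terms with \(L_n\)-coefficients. The group \(G\) acts trivially on \(L_n\), because \((g-1)I^n\subseteq I^{n+1}\). So \(L_n\otimes_\Lambda\Lambda=L_n\), and every \(x_i-1\) and every \(d_i\) acts through its augmentation. The complex becomes
\[
L_n\xrightarrow{\ell\mapsto(e_i\ell)_i}L_n^d\xrightarrow{\,0\,}L_n .
\]
Hence \(H_2(G;L_n)\) is the stated kernel, and \(H_1(G;L_n)=L_n^d/\{(e_i\ell)_i\}\). Separately, \(H_1(G;L_n)\cong G_{\rm ab}\otimes_\Z L_n\) by the universal coefficient theorem for trivial coefficients, since \(H_0(G;\Z)=\Z\) is free. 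One can also check this directly: \(G_{\rm ab}\) is \(\Z^d\) modulo the row \(e\), because abelianizing the relator \(r\) gives exactly \(e\).

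Then I will unwind the snake-lemma connecting map on cycles. An element \(a\in E_n\) is a \(2\)-cycle in \(A_n\). Lift it to \(\widetilde a\in A_{n+1}\) and apply the differential to get \((\widetilde a d_i)_i\in A_{n+1}^d\). This reduces to zero in \(A_n^d\), so it lies in \(L_n^d\), and its class is \(\delta_n(a)\). Well-definedness comes for free from the snake lemma: changing the lift by \(\ell\in L_n\) changes the vector by \((e_i\ell)_i\).

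The remaining assertions follow from exactness. The element \(a\) lifts to \(E_{n+1}\) exactly when \(\delta_n(a)=0\). Injectivity of \(H_2(G;L_n)\to E_{n+1}\) makes the fibre over any \(a\) a torsor under \(H_2(G;L_n)\).

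The only delicate point is bookkeeping of the left/right conventions. The operator \(D_{r,n}\) is right multiplication by the \(d_i\) on a left module. The isomorphism \(E_n\cong H_2(G;A_n)\) requires the complex \(A_n\otimes_\Lambda(\text{resolution})\) to reproduce exactly the map \(a\mapsto(ad_i)\) with the convention fixed earlier. I would check this once against \eqref{foxhomology} and otherwise treat it as given.
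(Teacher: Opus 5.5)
Your proposal is correct and follows the paper's proof essentially step for step. Both take the long exact homology sequence of \(0\to L_n\to A_{n+1}\to A_n\to0\) computed with Lyndon's resolution, use the vanishing of \(H_3\), use the triviality of the right action on \(L_n\) to get \(H_1(G;L_n)\cong G_{\rm ab}\otimes_\Z L_n\), and obtain the connecting map by applying the Fox boundary to a lift. Your left/right check also works out: tensoring the left-linear map \(1\mapsto(d_i)_i\) with the right module \(A_n\) gives exactly \(a\mapsto(ad_i)_i\).
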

\begin{proof}
Apply group homology to \(0\to L_n\to A_{n+1}\to A_n\to0\) and use \eqref{foxhomology}. The resolution has length two, so \(H_3(G;A_n)=0\). The right action of \(G\) on \(L_n\) is trivial, giving \(H_1(G;L_n)=G_{\rm ab}\otimes L_n\). The connecting homomorphism is computed by applying the Fox boundary to a lift. Changing that lift by \(\ell\in L_n\) changes the resulting vector by \((e_i\ell)_i\), proving the formula and the assertions about lifts.
\end{proof}

A nonzero element of \(E\) is a compatible family in \((E_n)\) with at least one nonzero coordinate. Theorems~\ref{foxtheorem} and~\ref{mannanmain} identify these families with second homotopy classes and give
\[
\pi_3(K_p^\wedge)\cong
\bigl(T_p\widehat\otimes_{\Z_p}T_p\bigr)^\tau.
\]

\subsection{A two-parameter family of examples constructed by GPT-6}
The construction starts from Mikhailov's free-by-cyclic one-relator group \cite{mikhailovlong}, varies its defining automorphism, and takes an HNN extension. We use the convention \(g^h=h^{-1}gh\).
\begin{proposition}\label{mikhailovexample}
Let \(K\) be the standard presentation complex of
\[
\Gamma=\left\langle x,z\ \middle|\ (x^z)^{x^2}=x^z\bigl((x^z)^x\bigr)^2\right\rangle.
\]
Then \(K\) is aspherical, \(\Gamma_2^\wedge\cong C_2\amalg\Z_2\), and
\[
\pi_2(\Z_\infty K)\cong\pi_2(K_2^\wedge)\ne0.
\]
For every odd prime \(p\), one has \(\Gamma_p^\wedge\cong\Z_p\) and \(\pi_j(K_p^\wedge)=0\) for \(j\ge2\).
\end{proposition}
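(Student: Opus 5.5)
We verify the hypotheses of the earlier results for this particular relator and then compute the completions prime by prime. Write \(y=x^z\), so the relator is
\[
r=(x^z)^{x^2}\bigl((x^z)^x\bigr)^{-2}(x^z)^{-1}
\]
(up to the order of the last two factors). The exponent sum of \(x\) in \(r\) is \(1-2-1=-2\), and that of \(z\) is \(0\). Hence in Proposition~\ref{exponentbound} we have \(e=(-2,0)\) and \(g=2\).

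\emph{Asphericity.} We first check that \(r\) is not a proper power. We cyclically reduce the word in \(x,z\). Its abelianized image is \(x^{-2}\), so a proper power \(u^m\) would need \(m\mid 2\). For \(m=2\) we compare the two halves of the cyclic word directly; their letter counts and the single occurrence pattern of \(z^{\pm1}\)-blocks should rule this out. Lyndon's theorem then gives \(K\simeq B\Gamma\). This puts us under the hypotheses of Theorems~\ref{baermain} and~\ref{foxmain}.

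\emph{Odd primes.} For \(p\) odd we have \(p\nmid g\). By the remark after Proposition~\ref{exponentbound}, the image of \(r\) in the Frattini quotient of \(F_2^{(p)}\) is \(x^{-2}\ne0\). So \(r\) extends to a free pro-\(p\) basis, for instance \(\{r,z\}\), and \(\Gamma_p^\wedge\) is free pro-\(p\) on the image of \(z\), that is, \(\Gamma_p^\wedge\cong\Z_p\). Proposition~\ref{exponentbound} gives \(T_p=0\). Equivalently, Proposition~\ref{cdcriterion} gives \(\pi_2(K_p^\wedge)=0\), and Theorem~\ref{mannanmain} gives \(\pi_j(K_p^\wedge)=0\) for \(j\ge2\). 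The product decomposition \eqref{product} of Theorem~\ref{main} then collapses to \(\pi_2(\Z_\infty K)\cong\pi_2(K_2^\wedge)\).

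\emph{The prime \(2\).} We define a homomorphism \(\Gamma\to C_2\amalg\Z_2\) by \(x\mapsto c\) and \(z\mapsto t\), where \(c\) has order \(2\). The relation holds: with \(y=c^t\) we have \(y^{c^2}=y\), and \(\bigl((c^t)^c\bigr)^2=1\), so both sides equal \(y\). The image generates, so we get a surjection \(\Gamma_2^\wedge\to C_2\amalg\Z_2\).

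For the converse it suffices to prove \(x^2=1\) in \(\Gamma_2^\wedge\). The quotient \(\Gamma_2^\wedge/\langle\!\langle x^2\rangle\!\rangle\) is the pro-\(2\) completion of \(\langle x,z\mid x^2\rangle\), because the relator becomes trivial once \(x^2=1\). That completion is exactly \(C_2\amalg\Z_2\).

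To prove \(x^2=1\) we work in an arbitrary finite \(2\)-group quotient \(P\). Let \(N\) be the normal closure of \(y\) under \(\langle x\rangle\), and induct along the Frattini series of the subgroup generated by the \(x\)-conjugates of \(y\). Modulo \(\Phi\), the relation reads \(y^{x^2}\equiv y\). Squaring kills \((y^x)^2\) at each successive layer, so conjugation by \(x^2\) is forced to be trivial on the whole \(2\)-group. We then aim to show that \(x^2\) itself lies in every open normal subgroup. This uses that \(x^2\) is trivial in \(P_{\rm ab}\) modulo \(\Phi\) and centralizes the relevant subgroup, in the spirit of Mikhailov's computation \cite{mikhailovlong} for the free-by-cyclic group.

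\emph{Main obstacle.} This step is the hardest part. If the direct Frattini induction fails, we will instead realise \(\Gamma\) as an HNN extension of Mikhailov's group. We would then import his pro-\(2\) description and apply the pro-\(2\) version of the amalgam/HNN completion.

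\emph{Conclusion at \(2\).} Once \(\Gamma_2^\wedge\cong C_2\amalg\Z_2\) is established, \(\Gamma_2^\wedge\) contains \(2\)-torsion. Hence \(\mathrm{cd}_2(\Gamma_2^\wedge)=\infty>2\). Proposition~\ref{cdcriterion} gives \(\pi_2(K_2^\wedge)\ne0\), and combined with the odd-prime collapse of \eqref{product} this completes the proof.
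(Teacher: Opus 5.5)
\textbf{Gap at \(p=2\).} Most of your proposal is fine, and two parts take a different route from the paper:
\begin{itemize}
\item Asphericity: you check directly that the cyclically reduced relator \(x^{-2}z^{-1}xzxz^{-1}x^{-2}zxz^{-1}x^{-1}z\) is not a square. Its six \(z\)-letters alternate in sign, so they cannot be \(3\)-periodic. The paper instead uses the HNN structure and torsion-freeness.
\item The odd primes are handled as in the paper.
\item The last step, \(2\)-torsion \(\Rightarrow \mathrm{cd}_2=\infty \Rightarrow \pi_2(K_2^\wedge)\ne0\) by Proposition~\ref{cdcriterion}, is valid. The paper exhibits an explicit kernel element instead.
\end{itemize}
The genuine gap is the step everything at \(p=2\) rests on: injectivity of your surjection \(\Gamma_2^\wedge\to C_2\amalg\Z_2\), i.e.\ \(x^2=1\) in \(\Gamma_2^\wedge\). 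Your Frattini-series sketch does not prove this. The claim that conjugation by \(x^2\) becomes trivial is unsupported. ``\(x^2\) is trivial modulo \(\Phi\)'' is vacuous, since \(x^2\) is a square. Even knowing \(x^2\) is central and lies in \([P,P]\) would not force \(x^2=1\).

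Your fallback does not work as stated either. Mikhailov's group \(M=F(a,b)\rtimes\langle t\rangle\), with \(t=x\), has \(M_{\rm ab}\cong\Z/2\times\Z\), and \(t\) generates the free part. So \(t\) has infinite order in \(M_2^\wedge\) but order two in \(\Gamma_2^\wedge\). Hence \(M_2^\wedge\to\Gamma_2^\wedge\) is not injective, and any pro-\(2\) HNN argument would itself have to detect this collapse, which is exactly the missing computation.

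\textbf{The missing idea} is a relative Frattini (normal Nakayama) argument.
\begin{itemize}
\item In \(F=F_2^{(2)}(x,z)\), let \(N\) and \(R\) be the closed normal subgroups generated by \(x^2\) and by \(r\). Your verification of the relation in \(C_2\amalg\Z_2\) is precisely \(R\subseteq N\).
\item The \(\F_2\)-space \(N/\overline{N^2[N,F]}\) is spanned by the class of \(x^2\).
\item The class of \(r\) there is nonzero. The \(x\)-exponent homomorphism \(F\to\Z_2\) sends \(N\) into \(2\Z_2\) and \(\overline{N^2[N,F]}\) into \(4\Z_2\), while \(r\mapsto-2\). Directly: with \(c=x^z\), one has \(c^{x^2}\equiv c\) and \((c^x)^2=(x^2)^{zx}\equiv x^2\) modulo \(\overline{[N,F]}\), so \(r\equiv x^{-2}\).
\item Hence \(N=R\,\overline{N^2[N,F]}\).
\item If \(R\ne N\), some finite \(2\)-group quotient \(P\) of \(F/R\) would contain a nontrivial image \(D\) of \(N\) with \(D=D^2[D,P]\). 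This is impossible for a nontrivial normal subgroup of a finite \(2\)-group.
\end{itemize}
Thus \(R=N\) and \(\Gamma_2^\wedge=F/N\cong C_2\amalg\Z_2\). This is where the exponent sum \(-2\), which you used only at odd primes, does its work at \(p=2\).
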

\begin{proof}
Put \(M=F(a,b)\rtimes\langle t\rangle\), with \(a^t=b\) and \(b^t=ab^2\). These formulas define an automorphism of \(F(a,b)\). Eliminating \(a,b\) identifies \(\Gamma\) with the HNN extension
\[
\langle M,z\mid t^z=a\rangle,
\]
where \(t=x\). Both associated cyclic subgroups are infinite. Britton's lemma embeds \(M\), and the HNN extension is torsion-free. Its nontrivial defining relator is therefore not a proper power, so Lyndon's theorem gives the asphericity of \(K\).

In \(F=F_2^{(2)}(x,z)\), write
\[
c=x^z,\qquad r=c^{x^2}\bigl(c(c^x)^2\bigr)^{-1},\qquad
N=\overline{\langle\!\langle x^2\rangle\!\rangle},\quad
R=\overline{\langle\!\langle r\rangle\!\rangle}.
\]
In \(F/N=C_2\amalg\Z_2\), both \(c\) and \(c^x\) are involutions, so \(R\subseteq N\). The quotient \(N/\overline{N^2[N,F]}\) is one-dimensional over \(\F_2\), generated by the class of \(x^2\). The exponent sum of \(x\) in \(r\) is \(-2\); hence \(r\) represents the same nonzero class. Normal pro-\(2\) Nakayama gives \(R=N\). Indeed, otherwise a finite \(2\)-group quotient \(P\) of \(F/R\) would contain a nontrivial image \(D\) of \(N/R\), with \(D=D^2[D,P]\), which is impossible for a nontrivial normal subgroup of a finite \(2\)-group.

Set \(\Lambda_2=\Z_2[[C_2\amalg\Z_2]]\). Since \(r\) and \(x^2\) normally generate the same closed subgroup, the class of \(r\) in its relation module is \(\alpha[x^2]\) for some \(\alpha\in\Lambda_2\). Comparing exponent sums gives \(\varepsilon(\alpha)=-1\), so \(\alpha\) is a unit. The Fox row is consequently
\[
D_{r,2}=(\alpha(1+x),0),\qquad
0\ne(x-1)\alpha^{-1}\in\ker D_{r,2}.
\]
For odd \(p\), the exponent sum \(-2\) makes \(r\) primitive in the free pro-\(p\) group; the quotient is \(\Z_p\) and the higher local homotopy vanishes. Theorems~\ref{main} and~\ref{foxtheorem} now give the asserted integral conclusion.
\end{proof}

This example does not resolve Serre's question in its corrected formulation: its closed pro-\(2\) relation subgroup has the normal generator \(x^2\).

\begin{remark}\label{mikhailovclassification}
The example belongs to the family
\[
c_q=x^z x^{-q},\qquad
r_{q,m}=c_q^{x^2}\bigl(c_q(c_q^x)^m\bigr)^{-1},\qquad
\Gamma_{q,m}=\langle x,z\mid r_{q,m}\rangle,
\quad q,m\in\Z.
\]
Motivated by the representation of the generalized quaternion group \(Q_{16}\) studied in \cite{hecke}, GPT-6 carried out the classification of this family to test whether a \(Q_{16}\)-type anomaly could occur among these groups. Every discrete presentation complex \(K_{q,m}\) is aspherical. At every odd prime the pro-\(p\) presentation is aspherical; its group is \(\Z_p\) if \(p\nmid m(1-q)\), and has cohomological dimension two otherwise. At \(p=2\), the exceptional cases are precisely \(q\) even and \(m\equiv2\pmod4\), when the group is \(C_2\amalg\Z_2\). In all other cases the pro-\(2\) presentation is aspherical, with cohomological dimension two except when \(m\) is odd and \(q\) is even, when the group is \(\Z_2\). Thus
\[
\Z_\infty K_{q,m}\text{ is aspherical}
\quad\Longleftrightarrow\quad
\neg\bigl(q\equiv0\pmod2\text{ and }m\equiv2\pmod4\bigr).
\]
The remaining cases are established using Anick's strong-freeness criterion \cite{anick} and G\"artner's theory of mild pro-\(p\) groups with weighted filtrations \cite{gaertner}, together with an index-two embedding argument when \(m\equiv2\pmod4\) and \(q\equiv3\pmod4\).
\end{remark}

\appendix
\section{Mannan's formula after completion}\label{mannanappendix}
For a connected two-complex \(K\), Mannan's theorem \cite[Theorem~A]{mannan} identifies \(\pi_3(K)\) with the symmetric \emph{invariants} of \(\pi_2(K)\otimes_\Z\pi_2(K)\), with diagonal fundamental group action. The rational argument below proves a cohomological Whitehead criterion and constructs a free model for the connected kernel. The pro-\(p\) argument then proves Theorem~\ref{mannanmain} using Quillen's criterion, without a finiteness assumption on the fundamental group.

\subsection{The rational case}\label{rationalmannan}
We work over \(k=\Q\) in the category of prounipotent affine group schemes. Cohomology \(H^*_{\rm rat}\) means the derived functors of invariants on rational discrete modules, totalized degreewise for a simplicial group. It is not the cohomology of the abstract group of rational points. A weak equivalence below means an isomorphism on all simplicial homotopy groups.

For a linearly compact space \(V\), write \(V^\vee=\Hom_{\rm cts}(V,k)\). Continuous duality with discrete vector spaces is exact; continuous linear maps have closed images and surjections have continuous linear sections. In particular,
\[
V\widehat\otimes_k W=(V^\vee\otimes_k W^\vee)^*
\]
is exact in each variable. Here \(*\) is the full dual of a discrete space. Completed exterior and symmetric powers are the images of the corresponding idempotents on completed tensor powers. For nonnegative complexes, continuous splittings give the completed K\"unneth formula, and the usual finite shuffle formulas give Eilenberg--Zilber. Products of linearly compact complexes are exact and commute with homology.

Let \(P_\bullet\) be free prounipotent with \(d\) nondegenerate generators in degree zero, \(m\) in degree one, and none in higher degrees. Put
\[
H=\pi_0P_\bullet,\quad N_\bullet=\ker(P_\bullet\to H),\quad
X=WP_\bullet,\quad M=\pi_1N_\bullet=\pi_2X.
\]
The homotopy groups have the linearly compact topology obtained by normalizing the simplicial Lie algebra. In particular \(N_\bullet\) is connected. The finite-type comparison \(W\widehat P_{\Q,\bullet}\simeq\Q_\infty K\) is the standard completion-model input \cite[Chapter~IV]{bk}; see also \cite{mikhovich2025}.

\subsubsection*{The coefficient complex and the connected kernel}
\begin{lemma}\label{ratkernel}
For a rational \(H\)-module \(A\), the cohomology of \(P_\bullet\) is computed by
\begin{equation}\label{ratcochains}
R\Gamma_{\rm rat}(P_\bullet;A)\simeq
[\,A\longrightarrow A^d\longrightarrow A^m\,],
\end{equation}
in degrees \(0,1,2\). If \(\Lambda=\mathcal O(H)^*\), the dual coefficient complex for the connected kernel is
\begin{equation}\label{ratchains}
0\longrightarrow\Lambda^m\xrightarrow{\partial_2}\Lambda^d
\xrightarrow{\partial_1}\Lambda\longrightarrow0,
\end{equation}
and has homology \(k,0,M\) in degrees \(0,1,2\). Moreover,
\[
H^q_{\rm rat}(N_\bullet;k)=
\begin{cases}k&q=0,\\M^\vee&q=2,\\0&q\ne0,2.\end{cases}
\]
The boundaries in \eqref{ratchains} are the completed Fox boundaries.
\end{lemma}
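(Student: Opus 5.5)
We reduce everything to the free prounipotent group $P_0$ and the fibration $N_\bullet\to P_\bullet\to H$. First, since $P_\bullet$ is degreewise free prounipotent, $H^q_{\rm rat}(P_j;A)$ vanishes for $q\ge2$. The degree-one group is the space of continuous crossed homomorphisms from $P_j$ to $A$, and on a free prounipotent group of rank $n$ this space is $A^n$ modulo principal ones; the cochain-level statement is that $\mathcal O(P_j)^*$ has an augmentation ideal free on the $T_i$. Totalizing the bisimplicial cochain complex and using that nondegenerate generators occur only in degrees $0$ and $1$ (Dold--Kan normalization kills the degenerate part), we get the three-term complex \eqref{ratcochains}. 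The differentials are the continuous Fox derivatives: $A\to A^d$ is $a\mapsto((x_i-1)a)_i$, and $A^d\to A^m$ evaluates the Fox derivatives $D_i(r_j)$ of the degree-one face images. This is the completed analogue of the classical cellular cochain complex of the universal cover.

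Second, we take $A=\mathcal O(H)$, the coinduced (regular) module. Shapiro's lemma for the rational fibration $N_\bullet\to P_\bullet\to H$ gives $H^q_{\rm rat}(P_\bullet;\mathcal O(H))\cong H^q_{\rm rat}(N_\bullet;k)$. The continuous dual of $\mathcal O(H)^d$ is $\Lambda^d$, and dualization is exact on linearly compact spaces, as in Lemma~\ref{exact}. So \eqref{ratchains}, with the completed Fox boundaries, is the continuous dual of the cochain complex, and its homology is dual to $H^*_{\rm rat}(N_\bullet;k)$.

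Third, we compute that homology directly. In degree $0$ we get $\Lambda/\overline{I_\Lambda}=k$. In degree $1$ we need exactness at $\Lambda^d$, i.e.\ the image of $\partial_2$ is the kernel of $\partial_1$. Since $A$ is free on the $T_i$ as a left ideal, the kernel of $\Lambda^d\to\Lambda$ is the image of the relation ideal $\overline{A(r-1)A}$, and that image is spanned topologically by the Fox vectors of the relators, which is $\operatorname{im}\partial_2$. This is the completed Lyndon/Fox argument for the augmentation ideal of a quotient. In degree $2$, $\ker\partial_2$ must be identified with $M=\pi_1N_\bullet$. Since $N_\bullet$ is connected, the rational Hurewicz theorem applied to the simplicial Lie algebra (normalize, then $H_1$ of the abelianized Moore complex) gives $\pi_1N_\bullet\cong H_1(N_\bullet)$. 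This is computed by the same complex in degree $2$, because $\Lambda^m$ is the abelianized degree-one generators of $N$ and $\partial_2$ is their Fox image. Dualizing, $H^2_{\rm rat}(N_\bullet;k)=M^\vee$. Finally, $H^q=0$ for $q\ge3$ because the complex has length two, and $H^1=0$ by the degree-$1$ exactness.

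The main obstacle is the second step: justifying Shapiro's lemma and the passage to the connected kernel for simplicial prounipotent groups in the rational-cohomology sense. Specifically, we need that restricting to $N_\bullet$ corresponds to coinduced coefficients $\mathcal O(H)$, which is injective as a rational $H$-module so that Hochschild--Serre collapses. I would prove this degreewise using $\mathcal O(P_j)=\mathcal O(N_j)\otimes\mathcal O(H)$ (which holds because $N_j\to P_j\to H$ splits as schemes in exponential coordinates) and then totalize. A secondary issue is the Hurewicz identification in degree $2$. I would handle it via the logarithm correspondence of the Logarithms and Moore complexes subsection, which reduces it to the Lie-algebra statement $\pi_1=H_1$ for connected free simplicial Lie algebras.
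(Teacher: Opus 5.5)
Your proposal is correct in outline, but it reaches the low-degree homology by the opposite route from the paper.

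\textbf{Where you agree with the paper.} Your first two steps match the paper's. The length-one augmentation resolution replaces $C^*_{\rm rat}(P_j;A)$ by $[A\to\operatorname{Der}_{\rm rat}(P_j,A)]$, and normalization leaves $A,A^d,A^m$. Shapiro rests on the scheme splitting $P_j\cong N_j\times H$. The paper argues that coinduction is an exact right adjoint preserving injectives; your Hochschild--Serre collapse does the same job.

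\textbf{Where you differ.} The paper never computes the Fox complex by hand. It computes $H^{\le2}_{\rm rat}(N_\bullet;k)$ intrinsically, from the Chevalley--Eilenberg totalization of $\mathfrak n_\bullet=\log N_\bullet$ filtered by exterior degree. It uses only $H_0\mathcal N\mathfrak n_\bullet=0$ and the resulting bound $H_t\mathcal N(\widehat\bigwedge{}^s\mathfrak n_\bullet)=0$ for $t<s$. The homology $k,0,M$ of \eqref{ratchains} is then read off through Shapiro and duality. You instead compute \eqref{ratchains} algebraically and transport the answer to $N_\bullet$.

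Your degree-one argument is sound: $\Lambda^d\cong I_A/\overline{\mathfrak r I_A}$, so $\ker\partial_1=\mathfrak r/\overline{\mathfrak r I_A}$, which is topologically spanned by the Fox vectors. Two corrections are needed. The ideal $\overline{A(r-1)A}$ should be the closed ideal generated by all $m$ degree-one face relations. The letter $A$ should not denote both the coefficient module and the power-series algebra.

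\textbf{The degree-two step needs filling in.} As written it is too compressed. $N_1$ is not generated by the $y_j$, and $N_1^{ab}$ also contains the degenerate summand $s_0N_0^{ab}$. What makes the step work is the completed relation sequence $0\to N_j^{ab}\to\Lambda\widehat\otimes_{\mathcal O(P_j)^*}I_j\to I_\Lambda\to0$, natural in $j$. Here $I_j$ is the augmentation ideal of $\mathcal O(P_j)^*$, and the sequence follows, for example, from homological Shapiro and the length-one resolution. Its middle term normalizes to $\Lambda^m\xrightarrow{\partial_2}\Lambda^d$ and its right term is constant. The long exact sequence therefore gives $\pi_1(N_\bullet^{ab})\cong\ker\partial_2$.

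Your Hurewicz step $\pi_1\mathfrak n_\bullet\cong\pi_1(\mathfrak n_\bullet^{ab})$ also uses that each $\mathfrak n_j$ is free; this holds because closed subgroups of free prounipotent groups are free. Freeness makes the lower central quotients $\widehat{\mathbb L}^{\,r}$ of a connected simplicial space, hence $(r-1)$-connected.

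\textbf{What each route buys.} Your route gives a classical Fox-calculus picture. In it, $H_1=0$ is a purely algebraic fact about $\Lambda=A/\mathfrak r$, and $M\cong\ker\partial_2$ is visible on cycles. The cost is the two extra inputs above: the degreewise relation sequence and freeness of the $N_j$. The paper's filtration argument needs neither. The same connectivity estimate is also reused in the cohomological Whitehead criterion, Lemma~\ref{ratwhitehead}.
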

\begin{proof}
For a free prounipotent group \(F\), its completed enveloping algebra is the completed tensor algebra on its generator space. Splitting a nonempty word into its prefix and last letter gives the augmentation resolution of length one. Thus \(F\) has rational cohomological dimension at most one. This argument applies also to a linearly compact generator space, using continuous maps and completed tensors; equivalently one may use the free-group cohomology calculation of \cite{lubmagid}.

Consequently the natural inclusion
\[
[\,A\longrightarrow\operatorname{Der}_{\rm rat}(P_n,A)\,]
\longrightarrow C^*_{\rm rat}(P_n;A)
\]
is a quasi-isomorphism: rational derivations are the one-cocycles, and the higher cohomology vanishes. Normalize in the cosimplicial direction. A normalized derivation vanishes on every degeneracy image. It is therefore zero in degrees \(n\ge2\); in degrees zero and one its independent values are \(A^d\) and \(A^m\). The constant term \(A\) contributes only in degree zero. Totalization proves \eqref{ratcochains}; evaluating a derivation on each relator gives the Fox differential.

Rational Shapiro gives, naturally in the simplicial degree,
\begin{equation}\label{ratshapiro}
R\Gamma_{\rm rat}(N_\bullet;k)
\simeq R\Gamma_{\rm rat}(P_\bullet;\mathcal O(H)).
\end{equation}
Here is an exactness justification. A continuous linear section of
\(\log P_n\twoheadrightarrow\log H\), followed by exponentiation, gives a section of \(P_n\to H\) as a morphism of schemes. Multiplication identifies \(P_n\) with \(N_n\times H\) as an \(N_n\)-space. Coinduction consequently has underlying vector space \(V\otimes_k\mathcal O(H)\) and is exact. It is right adjoint to exact restriction and preserves injectives, so derived adjunction proves Shapiro. Its natural comparison totalizes; the sections used to check exactness need not be simplicially compatible. Equations \eqref{ratcochains} and \eqref{ratshapiro} imply vanishing above degree two.

It remains to identify degree two. For a simplicial pronilpotent Lie algebra \(\mathfrak n_\bullet\), form
\begin{equation}\label{ratCE}
\mathcal C(\mathfrak n_\bullet)
=\operatorname{Tot}_{s,t}\mathcal N_t
  (\widehat\bigwedge{}^s\mathfrak n_\bullet),
\qquad \text{total degree }s+t,
\end{equation}
with the simplicial and Chevalley--Eilenberg differentials; \(\mathcal N\) denotes normalization. Its continuous dual computes rational cohomology. The standard characteristic-zero comparison follows from the unipotent group--Lie algebra comparison on finite-dimensional quotients and filtered unions of cochains \cite{lubmagid,quillenrational}. Both totalizations are in the first quadrant.

Take \(\mathfrak n_\bullet=\log N_\bullet\). Since \(H_0(\mathcal N\mathfrak n_\bullet)=0\), K\"unneth, Eilenberg--Zilber and antisymmetrization give
\[
H_t\mathcal N(\widehat\bigwedge{}^s\mathfrak n_\bullet)=0
\qquad(t<s).
\]
Filter \eqref{ratCE} by exterior degree. The layers \(s\ge2\) first contribute in total degree \(2s\ge4\). Thus
\[
H_0\mathcal C=k,\qquad H_1\mathcal C=0,\qquad
H_2\mathcal C\cong H_1\mathcal N\mathfrak n_\bullet=M.
\]
No differential can enter the degree-two term: its source would have total degree three and exterior degree at least two. The filtration is finite in each total degree, so it converges. Exact continuous duality gives the assertion about cohomology and then, by dualizing \eqref{ratcochains} with \(A=\mathcal O(H)\), the assertion about \eqref{ratchains}.
\end{proof}

\subsubsection*{A cohomological Whitehead criterion}
\begin{lemma}\label{ratwhitehead}
A morphism \(f:S_\bullet\to T_\bullet\) of connected simplicial prounipotent groups inducing isomorphisms on \(H^q_{\rm rat}(-;k)\) for every \(q\) is a weak equivalence. No finite-dimensionality hypothesis on these cohomology groups is needed.
\end{lemma}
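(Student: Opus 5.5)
We prove that \(f\) is a weak equivalence by passing to simplicial Lie algebras and inducting on the homotopy degree, using the Chevalley--Eilenberg total complex \eqref{ratCE}. Put \(\mathfrak s_\bullet=\log S_\bullet\) and \(\mathfrak t_\bullet=\log T_\bullet\). As in the Moore-complex discussion of Section~\ref{adelic}, exponentiation identifies Moore cycles and boundaries, so \(\pi_q(S_\bullet)\cong H_q\mathcal N\mathfrak s_\bullet\) as linearly compact spaces. Rational cohomology is the continuous dual of \(H_*\mathcal C(\mathfrak s_\bullet)\), and continuous duality is exact and faithful on linearly compact spaces. The hypothesis therefore becomes: \(\mathcal C(f)\) is a quasi-isomorphism of linearly compact complexes. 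We must show that \(\mathcal N f\) is a quasi-isomorphism.

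\emph{Inductive step.} Suppose \(\mathcal Nf\) is an isomorphism on \(H_t\) for all \(t<n\). Filter \(\mathcal C\) by exterior degree \(s\), with associated graded layers \(\mathcal N(\widehat\bigwedge{}^s\mathfrak n_\bullet)[s]\).

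\begin{itemize}[nosep]
\item Connectivity gives \(H_0\mathcal N\mathfrak s=0\). Completed K\"unneth, Eilenberg--Zilber and antisymmetrization then give \(H_t\mathcal N(\widehat\bigwedge{}^s\mathfrak s)=0\) for \(t<s\), exactly as in Lemma~\ref{ratkernel}.
\item For \(s\ge2\), the same formulas give \(H_t\mathcal N(\widehat\bigwedge{}^s\mathfrak s)\) as a natural functor of \(H_{<t}\) of \(\mathcal N\mathfrak s\): a completed graded exterior power of the homotopy, with terms in each \(t\) involving only \(H_u\) for \(u\le t-s+1<t\).
\item Hence \(f\) induces isomorphisms on \(E^1_{s,t}\) for all \(s\ge2\) and total degree \(s+t\le n+1\).
\item The layer \(s=1\) is \(\mathcal N\mathfrak s_\bullet\) shifted up by one. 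It contributes \(H_n\mathcal N\mathfrak s\) to total degree \(n+1\) and is an isomorphism below that.
\end{itemize}

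Compare the two spectral sequences, or equivalently apply the five lemma to the short exact sequence \(0\to F_{\ge2}\to\mathcal C\to \mathcal C/F_{\ge2}\to0\) in total degrees \(n+1\) and \(n+2\). This requires control of \(H_{n+2}F_{\ge2}\). Its layers in that total degree involve \(H_u\) for \(u\le n\) only, including \(H_n\) via \(\mathfrak s\otimes\mathfrak s\)-type terms in \(s=2,t=n\). So we first prove \(H_nf\) is injective and surjective using degree \(n+1\), where \(F_{\ge2}\) involves only \(H_{<n}\), then bootstrap.

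\emph{Cleaner route, which we take.} Work with the mapping cone \(C\) of \(\mathcal Nf\), which is exact in degrees below \(n\), and with the relative filtration. The lowest possibly nonzero homology of the cone of \(\mathcal C(f)\) sits in total degree \(n+1\) and equals \(H_n(C)\). The contributions from \(s\ge2\) are cones of maps between exterior functors of isomorphic lower homotopy, hence exact in that range. Since \(\mathcal C(f)\) is a quasi-isomorphism, \(H_n(C)=0\). Induction on \(n\) finishes the proof.

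\emph{Linear compactness and convergence.} Completed tensor and exterior powers are exact in each variable on linearly compact spaces, so the K\"unneth identifications hold without finite-dimensionality. This is why no finiteness assumption is needed. The filtration is finite in each total degree, so convergence is automatic.

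The main obstacle is the precise naturality claim that the \(s\ge2\) layers depend only on lower homotopy in the relevant range. We handle it by noting that a map of connected simplicial linearly compact spaces that is a quasi-isomorphism below degree \(n\) induces, on \(\widehat\bigwedge{}^s\) for \(s\ge2\), a quasi-isomorphism below degree \(n+s-1\ge n+1\). This follows from the completed Eilenberg--Zilber formula applied to the connected cone.
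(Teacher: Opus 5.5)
Your final ``cleaner route'' is essentially the paper's proof. The paper takes the least $n$ with $H_n\operatorname{Cone}(\mathcal N\log f)\ne0$, which is your induction on exactness of the cone. It filters the cone of $\mathcal C(\log f)$ by exterior degree and, replacing one tensor factor at a time and using K\"unneth and Eilenberg--Zilber, shows the layer-$s$ cones vanish in simplicial degree $t<n+s-1$; hence layers $s\ge2$ contribute nothing in total degrees $n+1,n+2$. The layer-one class in total degree $n+1$ therefore survives, contradicting exact continuous duality.

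Two small points. The induction must be on vanishing of $H_{<n}$ of the cone, as in your cleaner route, not on $H_{<n}\mathcal Nf$ being an isomorphism, since $H_n(C)=0$ only gives surjectivity of $H_n\mathcal Nf$. Also, the bootstrapping worry in your first route is unfounded: the $s=2$, $t=n$ term involves only $H_u$ with $u\le n-1$.
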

\begin{proof}
Set \(A_*=\mathcal N\log S_\bullet\), \(B_*=\mathcal N\log T_\bullet\), and \(u=\mathcal N\log f\). If \(u\) is not a quasi-isomorphism, choose the least \(n\ge1\) such that \(H_n\operatorname{Cone}(u)\ne0\). Filter the cone of \(\mathcal C(\log f)\) by exterior degree. Its layer \(s=0\) is the acyclic cone of \(k\xrightarrow{1}k\). Layer \(s=1\) has a nonzero homology class in total degree \(n+1\).

For \(s\ge2\), factor \(u^{\widehat\otimes s}\) into \(s\) maps replacing one factor at a time. The cone of each replacement is \(\operatorname{Cone}(u)\) tensored with \(s-1\) complexes whose homology starts in degree one. K\"unneth and the exact triangles for cones of composites give homology vanishing below \(n+s-1\). Transfer this bound to simplicial tensors by Eilenberg--Zilber and take the antisymmetric direct summand. Therefore
\[
H_t\operatorname{Cone}\bigl(\mathcal N
 (\widehat\bigwedge{}^s\log f)\bigr)=0
\qquad(t<n+s-1).
\]
Layer \(s\ge2\) of the filtered total cone thus starts in degree \(n+2s-1\ge n+3\). It cannot supply an incoming differential to the class in degree \(n+1\): that would require total degree \(n+2\). Outgoing differentials land in the acyclic layer zero or outside the filtration. The class survives.

In total degree \(q\) there are only finitely many layers \(0\le s\le q\). Hence the spectral sequence converges to the homology of the cone, which is nonzero. Exact continuous duality contradicts the assumed cohomological equivalence. Thus \(u\) is a quasi-isomorphism, as required.
\end{proof}

\subsubsection*{The free model, the quadratic operation, and asphericity}
The projection from one-cycles in \(\mathfrak n_\bullet=\log N_\bullet\) onto \(M\) has a continuous linear section. Under Dold--Kan this defines a simplicial linear map
\[
V_\bullet=\operatorname{DK}(M[1])\longrightarrow\mathfrak n_\bullet.
\]
Free completion and exponentiation give a morphism
\begin{equation}\label{ratfreemodel}
f:E_\bullet:=\exp\LL(V_\bullet)\longrightarrow N_\bullet.
\end{equation}
Free completion includes both bracket length and the finite-dimensional quotients of the generator space. The model \(E_\bullet\) has no degree-zero generators, its degree-one generator space is \(M\), and its higher generators are all degenerate.

By the derivation calculation in Lemma~\ref{ratkernel}, its cohomology is \(k\) in degree zero, \(M^\vee\) in degree two, and zero otherwise. To identify its homotopy directly, decompose
\[
\LL(V_\bullet)=\prod_{r\ge1}\widehat{\mathbb L}^{\,r}(V_\bullet).
\]
In characteristic zero the homogeneous Lie functor of length \(r\) is the image of a natural idempotent on the \(r\)th tensor power (the Dynkin--Specht--Wever operator divided by \(r\)). Eilenberg--Zilber and K\"unneth therefore show that its homology is concentrated in degree \(r\). Exactness of products gives
\[
\pi_1E_\bullet=M,\qquad
\pi_2E_\bullet=\widehat{\operatorname{Sym}}{}^2_k M.
\]
The second identity uses the graded symmetry of the bracket of two degree-one classes. By construction \(\pi_1f\) is the identity on \(M\). The natural degree-two identification in Lemma~\ref{ratkernel} shows that \(f\) is a cohomological equivalence. Lemma~\ref{ratwhitehead} now proves \(E_\bullet\simeq N_\bullet\).

For the canonical normalization, let \(q(x)=\tfrac12[x,x]\) be the rational Hopf operation. The bracket on the normalized homotopy Lie algebra defines
\[
\Psi:(M\widehat\otimes_k M)^\tau\longrightarrow\pi_2N_\bullet,
\qquad \Psi=\tfrac12[\ ,\ ],
\]
so that
\[
x\otimes x\longmapsto q(x),\qquad
x\otimes y+y\otimes x\longmapsto[x,y].
\]
The free-model calculation proves that \(\Psi\) is an isomorphism. Continuity extends it to the completed tensor square. Its definition by natural operations makes it independent of the chosen section and equivariant under conjugation in \(P_\bullet\). The action on homotopy factors through \(H\), giving the diagonal action asserted in Theorem~\ref{mannanmain}.

Finally, if \(M=0\), then \(E_\bullet=1\), and the weak equivalence \eqref{ratfreemodel} gives \(\pi_jX=0\) for all \(j\ge2\). In the one-relator case the independent Fox argument in Section~\ref{ratasphsection} supplies the premise \(M=0\).

For nonnilpotent \(K\), the comparison used above is the free-model equivalence \(W\widehat P_{\Q,\bullet}\simeq\Q_\infty K\); it does not require an identification of \(\pi_i(\Q_\infty K)\) with \(\pi_i(K)\otimes\Q\).

\subsection{The pro-\texorpdfstring{\(p\)}{p} formula via Quillen}\label{quillenmannan}
We prove Theorem~\ref{mannanmain} in the more general setting of a finite pro-\(p\) presentation. Let \(P_\bullet\) be its free simplicial continuation, with \(d\) nondegenerate generators in degree zero, \(m\) in degree one, and none in higher degrees. Put
\[
H=\pi_0P_\bullet,\qquad \Lambda=\Z_p[[H]],\qquad
N_\bullet=\ker(P_\bullet\longrightarrow H),\qquad X=WP_\bullet.
\]
Then \(\pi_0N_\bullet=1\) and \(\pi_iN_\bullet\cong\pi_iP_\bullet\) for \(i\ge1\). Thus \(WN_\bullet\) models the simply connected cover in the simplicial pro-\(p\) category. The symbol \(H^q_{\mathrm{cts}}(S_\bullet;A)\) below means continuous cohomology in Quillen's sense \cite[Section~2]{quillenprofinite}. It is not the cohomology obtained by forgetting all profinite topologies.

\begin{lemma}\label{kernelhomology}
There is a complex of compact modules
\begin{equation}\label{coverchains}
C_\bullet:\quad 0\longrightarrow\Lambda^m
\xrightarrow{\partial_2}\Lambda^d
\xrightarrow{\partial_1}\Lambda\longrightarrow0,
\end{equation}
with the usual completed cellular, or Fox, boundary maps, such that
\[
H_0(C_\bullet)=\Z_p,\qquad H_1(C_\bullet)=0,\qquad
H_2(C_\bullet)\cong M:=\pi_1N_\bullet=\pi_2X.
\]
Moreover, \(H^q_{\mathrm{cts}}(N_\bullet;\F_p)=0\) for \(q\ne0,2\), and
\[
H^2_{\mathrm{cts}}(N_\bullet;\F_p)
\cong\Hom_{\mathrm{cts}}(M,\F_p).
\]
The underlying compact \(\Z_p\)-module \(M\) is a product of copies of \(\Z_p\).
\end{lemma}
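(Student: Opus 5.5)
I will prove Lemma~\ref{kernelhomology} by following the rational Lemma~\ref{ratkernel} step by step. Continuous pro-\(p\) cohomology replaces rational cohomology, compact \(\Lambda\)-modules replace linearly compact ones, and Pontryagin duality replaces continuous linear duality.

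First I construct \(C_\bullet\) and identify its homology. A free pro-\(p\) group \(F\) has \(\cdp F\le1\). Its completed group algebra has the augmentation resolution \(0\to\Z_p[[F]]^d\to\Z_p[[F]]\to\Z_p\to0\), given by the continuous Fox decomposition. For each simplicial degree \(n\) this gives a quasi-isomorphism \([A\to\operatorname{Der}_{\rm cts}(P_n,A)]\simeq C^*_{\rm cts}(P_n;A)\) for discrete \(p\)-torsion \(H\)-modules \(A\). I normalize cosimplicially and note that normalized derivations vanish on degenerate generators. Hence only the \(d\) generators in degree zero and the \(m\) in degree one survive, and totalization gives \(R\Gamma_{\rm cts}(P_\bullet;A)\simeq[A\to A^d\to A^m]\) with Fox differentials. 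Taking \(A\) to be the discrete modules \(\Z/p^k[H/V]\) and passing to the inverse limit of Pontryagin duals defines \(C_\bullet\). Then \(H_0=\Z_p\) follows because \(\partial_1\) has image the augmentation ideal. Continuous Shapiro, proved via a continuous section of \(P_n\to H\) exactly as in \eqref{ratshapiro}, identifies \(C_\bullet\) with the chains computing the homology of \(N_\bullet\) with \(\Z_p\) coefficients.

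Next I compute \(H_*(N_\bullet)\). Since \(N_\bullet\) is connected, its low homology is given by a Hurewicz argument: in the simplicial pro-\(p\) setting, with \(\pi_0N_\bullet=1\), one has \(H_1=0\) and \(H_2(WN_\bullet;\Z_p)\cong\pi_1N_\bullet=M\). I will use the spectral sequence of the simplicial group \(N_\bullet\) with \(E^1_{s,t}=H_t(N_s;\Z_p)\). Here \(H_t(N_s)=0\) for \(t\ge2\) because \(\cdp\le1\), and \(H_1(N_s)=N_s^{\rm ab}\). The \(t=1\) row therefore computes \(\pi_*\) of the abelianization \(N_\bullet^{\rm ab}\), which starts in degree one with \(\pi_1=M\) by the standard Curtis-type connectivity of abelianization on connected free simplicial groups. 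Together with the vanishing of \(R\Gamma\) above degree two, this gives \(H_1(C)=0\) and \(H_2(C)\cong M\), and it shows that the cohomology of \(N_\bullet\) with \(\F_p\) coefficients is concentrated in degrees \(0\) and \(2\). Finally, \(H^2_{\rm cts}(N_\bullet;\F_p)=\Hom_{\rm cts}(H_2,\F_p)=\Hom_{\rm cts}(M,\F_p)\), by duality and the vanishing of \(H_1\), so that the universal coefficient Ext term drops out.

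For the last claim, \(M=\ker\partial_2\) is a closed submodule of \(\Lambda^m\). The group \(\Lambda^m\) is \(\Z_p\)-torsion-free as an inverse limit of the \(\Z_p\)-free modules \(\Z_p[H/V]\). Hence \(M\) is a torsion-free compact \(\Z_p\)-module, and such a module is pro-free, that is, a product \(\prod\Z_p\). This follows by Pontryagin duality: the dual of \(M\) is a divisible \(p\)-torsion discrete group, hence a direct sum of copies of \(\Q_p/\Z_p\).

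The main obstacle is the middle step, namely identifying \(H_2(C)\) with \(M\) canonically without any finiteness assumption on \(H\), and with compact rather than linearly compact topologies. In that step, exactness of inverse limits of compact modules, which is automatic, replaces the continuous linear splittings used in the rational case.
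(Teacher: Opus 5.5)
Your proposal is correct and follows essentially the paper's route: the Fox cochain complex of $P_\bullet$ via normalized derivations, degreewise continuous Shapiro to pass to $N_\bullet$, Pontryagin duality to obtain $C_\bullet$, a Hurewicz identification of $H_2$ with $\pi_1N_\bullet$, and Pontryagin duality to show that the torsion-free module $M$ is a product of copies of $\Z_p$. The one difference is the Hurewicz step. You derive it from the degreewise spectral sequence together with a pro-$p$ Curtis-type connectivity of $\gamma_2N_\bullet$, which you call standard but which still needs a pro-$p$ justification. The paper instead cites the continuous Hurewicz/transgression isomorphism of Quillen's Proposition~2.2(d),(e).
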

\begin{proof}
For a discrete \(p\)-primary \(H\)-module \(A\), continuous cohomology of \(P_\bullet\) is computed by the cellular cochain complex
\[
0\longrightarrow A\longrightarrow A^d\longrightarrow A^m\longrightarrow0.
\]
Each \(P_n\) is free pro-\(p\), so its cohomology vanishes above degree one. Quillen's degreewise cohomology spectral sequence \cite[Proposition~2.2(c)]{quillenprofinite} consequently has only two rows. Use the natural exact sequence
\[
0\to A^H\to A\to\operatorname{Der}_{\mathrm{cts}}(P_n,A)
\to H^1_{\mathrm{cts}}(P_n,A)\to0.
\]
After cosimplicial normalization, the derivation terms occur only in degrees zero and one, where they are \(A^d\) and \(A^m\). In higher degrees all free generators are degenerate. Combining these two rows gives the displayed cochain complex, including its Fox differential.

For a trivial discrete \(p\)-primary coefficient group \(B\), degreewise continuous Shapiro and the same spectral sequence give
\[
H^q_{\mathrm{cts}}(N_\bullet;B)
\cong H^q_{\mathrm{cts}}
 \bigl(P_\bullet;\operatorname{Map}_{\mathrm{cts}}(H,B)\bigr),
\]
where the coefficient module on the right is coinduced from the identity subgroup of \(H\). Hence these groups vanish for \(q>2\). With \(B=\Q_p/\Z_p\), Pontryagin duality turns the cochain complex into \eqref{coverchains}. Since \(\pi_0N_\bullet=1\), its first homology is zero. The continuous Hurewicz isomorphism identifies its second homology with \(\pi_1N_\bullet\). This low-degree form also follows directly from the path-loop extension of \(N_\bullet\) and \cite[Proposition~2.2(e),(d)]{quillenprofinite}: transgression identifies
\(H^2_{\mathrm{cts}}(N_\bullet;B)\) with
\(\Hom_{\mathrm{cts}}(\pi_1N_\bullet,B)\).
Taking \(B=\F_p\) gives the asserted mod-\(p\) formula.

Finally, \(M=\ker\partial_2\) is a closed torsion-free \(\Z_p\)-submodule of \(\Lambda^m\). A torsion-free compact \(\Z_p\)-module is a product of copies of \(\Z_p\): its Pontryagin dual is a divisible \(p\)-primary group, hence a direct sum of copies of \(\Q_p/\Z_p\). This proves the last assertion.
\end{proof}

\subsubsection*{Reduction to a free model on one-dimensional cycles}
Choose an isomorphism \(M\cong\prod_{i\in I}\Z_p e_i\). The index set \(I\) is at most countable, because \(H\), \(\Lambda\), and \(M\) are metrizable. The coordinate vectors \(e_i\) converge to zero. Choose representatives
\[
x_i\in Z_1N_\bullet=\ker d_0\cap\ker d_1\subseteq N_1
\]
converging to the identity. Such lifts exist: the quotient map \(Z_1N_\bullet\to M\) is open, and a nested countable neighbourhood basis allows the lifts of a convergent family to be chosen convergently.

Let \(E_\bullet\) be free pro-\(p\) on this convergent family in simplicial degree one, with both faces equal to the identity and with only degenerate generators in all higher degrees. Sending its generators to the \(x_i\) defines
\[
f:E_\bullet\longrightarrow N_\bullet.
\]
For a finite subset \(J\subseteq I\), let \(E(J)_\bullet\) denote the model with only the generators indexed by \(J\). There are natural identifications
\[
E_\bullet=\limi_{J\subseteq I,\ J\text{ finite}}E(J)_\bullet,
\qquad
\pi_1E_\bullet\cong\prod_{i\in I}\Z_p e_i=M.
\]
Indeed, a continuous map from the convergent free generating family to a finite group kills all but finitely many generators. Each \(WE(J)_\bullet\) is the pro-\(p\) completion of a finite wedge of two-spheres. Its homotopy groups are the \(p\)-adic completions of those of the wedge by \cite[Theorem~1.5]{quillenprofinite}. Homotopy groups commute with these inverse limits: Moore cycles are closed, Moore boundaries are compact images, and inverse limits of profinite groups are exact.

The map \(f\) induces the chosen isomorphism on \(\pi_1\). Both its source and target have continuous mod-\(p\) cohomology only in degrees zero and two. For the source this follows from the finite wedge models and passage to the filtered direct limit of continuous cohomology; for the target it is Lemma~\ref{kernelhomology}. In degree two, the cohomology map is the continuous dual of \(\pi_1f\). Thus \(f\) is an isomorphism on all continuous mod-\(p\) cohomology groups. Quillen's Whitehead criterion \cite[Corollary~2.9, p.~54]{quillenprofinite} now gives
\begin{equation}\label{quillenreduction}
E_\bullet\xrightarrow{\ \simeq\ }N_\bullet.
\end{equation}
The argument applies to arbitrary compact \(M\) and pro-\(p\) group \(H\).

\subsubsection*{The quadratic calculation and equivariance}
For finite \(J\), the ordinary formula for a wedge of two-spheres \cite{mannan}, followed by \(p\)-completion, gives
\[
\pi_2E(J)_\bullet\cong
\bigl(\Z_p^J\otimes_{\Z_p}\Z_p^J\bigr)^\tau.
\]
Taking inverse limits and using \eqref{quillenreduction} yields an isomorphism of compact \(\Z_p\)-modules
\begin{equation}\label{mannanformula}
\pi_3X=\pi_2N_\bullet\cong
\bigl(M\widehat\otimes_{\Z_p}M\bigr)^\tau.
\end{equation}
Here
\[
M\widehat\otimes_{\Z_p}M
\cong\limi_{J\subseteq I,\ J\text{ finite}}
\bigl(\Z_p^J\otimes_{\Z_p}\Z_p^J\bigr),
\]
and invariants commute with inverse limits.

To see that \eqref{mannanformula} is canonical and \(H\)-equivariant, use the continuous quadratic Hopf operation
\(q:M\to\pi_2N_\bullet\). For a one-dimensional Moore cycle \(x\), it is represented, with a fixed Hopf orientation, by \([s_0x,s_1x]\). Its polarization is the Whitehead product. The map from the invariant tensor square to \(\pi_2N_\bullet\) is characterized by
\[
x\otimes x\longmapsto q(x),\qquad
x\otimes y+y\otimes x\longmapsto[x,y].
\]
These formulas define a continuous linear map on the completed invariant tensor square. One can check this on finite free coordinate quotients: a continuous quadratic operation to a finite quotient is uniformly continuous, its polarization is bilinear, and it factors through finitely many coordinates and a finite \(p\)-power quotient. The finite wedge calculation shows that the map is an isomorphism. Its definition by the Hopf operation makes it independent of the chosen basis and representatives.

Conjugation in \(P_\bullet\) preserves the Hopf operation and the Whitehead product. On homotopy it factors through \(H=\pi_0P_\bullet\), so the isomorphism is one of compact \(\Z_p[[H]]\)-modules with diagonal action. This proves the pro-\(p\) formula in Theorem~\ref{mannanmain}. It also proves the same statement for finite pro-\(p\) presentations whose relators need not come from discrete words.

The argument includes \(p=2\). In that case \([x,x]=2q(x)\); the diagonal generator is sent to \(q(x)\), without division by two. For odd \(p\), averaging identifies the invariant tensor square with the symmetric quotient.

\begin{corollary}
For the presentation models above, \(\pi_2X=0\) implies \(\pi_jX=0\) for every \(j\ge2\).
\end{corollary}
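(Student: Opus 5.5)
The plan is to argue by induction on \(j\), using the reduction \eqref{quillenreduction} \(E_\bullet\simeq N_\bullet\), and to handle the rational presentation models in parallel with the free model \eqref{ratfreemodel}. The hypothesis \(\pi_2X=0\) means \(M=\pi_1N_\bullet=0\), by Lemma~\ref{kernelhomology} in the pro-\(p\) case and Lemma~\ref{ratkernel} in the rational case.

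First step: apply the free-model construction with the empty family of generators. In the pro-\(p\) case, \(M\cong\prod_{i\in I}\Z_pe_i\) with \(I=\emptyset\), so \(E_\bullet\) is the free pro-\(p\) simplicial group on no generators, namely the trivial simplicial group. In the rational case \(V_\bullet=\operatorname{DK}(0)=0\) and \(E_\bullet=\exp\LL(0)=1\).

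Second step: check that the Whitehead-type criteria still apply when the source is trivial. The trivial group has continuous (respectively rational) cohomology \(\F_p\) (respectively \(k\)) concentrated in degree zero. By Lemma~\ref{kernelhomology} and Lemma~\ref{ratkernel}, \(N_\bullet\) has the same cohomology, because the only other possible group, the one in degree two, is the dual of \(M=0\). Hence \(f\colon 1\to N_\bullet\) is a cohomology isomorphism. Quillen's criterion \cite[Corollary~2.9]{quillenprofinite}, respectively Lemma~\ref{ratwhitehead}, then gives \(N_\bullet\simeq 1\). Both criteria require connected simplicial groups, and \(N_\bullet\) is connected since \(\pi_0N_\bullet=1\).

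Third step: conclude that \(\pi_jX=\pi_{j-1}P_\bullet\cong\pi_{j-1}N_\bullet=0\) for all \(j\ge2\), using the degree shift under \(W\) recorded in Section~\ref{adelic} together with \(\pi_iN_\bullet\cong\pi_iP_\bullet\) for \(i\ge1\).

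The main obstacle is the second step in the pro-\(p\) case: making sure the hypotheses of Quillen's criterion hold for this degenerate morphism. Here \(N_\bullet\) need not be of finite type, and the coefficients must be discrete. Since Lemma~\ref{kernelhomology} was stated for arbitrary compact \(M\) and arbitrary \(H\), I would simply cite it with \(M=0\). As a cross-check, the formula \eqref{mannanformula} already yields \(\pi_3X=0\).
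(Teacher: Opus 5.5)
Your proposal is correct and is essentially the paper's argument: when $M=0$ the free model $E_\bullet$ has no generators, so the equivalence \eqref{quillenreduction} (proved for arbitrary compact $M$, including $M=0$) gives $N_\bullet\simeq 1$, and hence $\pi_jX\cong\pi_{j-1}N_\bullet=0$ for $j\ge2$. Your re-check of Quillen's criterion for $1\to N_\bullet$, and the parallel rational case via Lemma~\ref{ratwhitehead}, only spell this out, and the announced induction on $j$ is never used and can be dropped.
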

\begin{proof}
If \(M=0\), the model \(E_\bullet\) has no generators and is trivial. Apply \eqref{quillenreduction}.
\end{proof}
\bigskip
\section*{Acknowledgements}
The author is grateful to O.~V.~Mel'nikov for illuminating the difficulties surrounding one-relator pro-\(p\) groups, and to GPT-6 for useful discussions during this work and assistance in preparing the manuscript.


\begin{thebibliography}{19}\small
\bibitem{anick} D.~J.~Anick. Non-commutative graded algebras and their Hilbert series. \emph{Journal of Algebra} 78 (1982), 120--140.
\bibitem{bk} A.~K.~Bousfield, D.~M.~Kan. \emph{Homotopy Limits, Completions and Localizations}. Lecture Notes in Mathematics 304. Springer, 1972.
\bibitem{liemodels} U.~Buijs, Y.~Félix, A.~Murillo, D.~Tanré. \emph{Lie Models in Topology}. Progress in Mathematics 335. Birkhäuser, 2020. DOI: \href{https://doi.org/10.1007/978-3-030-54430-0}{10.1007/978-3-030-54430-0}.
\bibitem{burnsellis} J.~Burns, G.~Ellis. On the nilpotent multipliers of a group. \emph{Mathematische Zeitschrift} 226 (1997), 405--428. DOI: \href{https://doi.org/10.1007/PL00004348}{10.1007/PL00004348}.
\bibitem{gaertner} J.~G\"artner. Higher Massey products in the cohomology of mild pro-\(p\)-groups. \emph{Journal of Algebra} 422 (2015), 788--820.
\bibitem{gildenhuys} D.~Gildenhuys. On pro-\(p\)-groups with a single defining relator. \emph{Inventiones Mathematicae} 5 (1968), 357--366. DOI: \href{https://doi.org/10.1007/BF01389782}{10.1007/BF01389782}.
\bibitem{hmr} P.~Hilton, G.~Mislin, J.~Roitberg. \emph{Localization of Nilpotent Groups and Spaces}. North-Holland Mathematics Studies 15. North-Holland, 1975.
\bibitem{right} S.~O.~Ivanov, R.~Mikhailov. Right exact group completion as a transfinite invariant of homology equivalence. \emph{Algebraic \& Geometric Topology} 21:1 (2021), 447--468. \href{https://arxiv.org/abs/1909.10181}{arXiv:1909.10181}.
\bibitem{lubmagid} A.~Lubotzky, A.~Magid. Cohomology of unipotent and prounipotent groups. \emph{Journal of Algebra} 74 (1982), 76--95. DOI: \href{https://doi.org/10.1016/0021-8693(82)90006-0}{10.1016/0021-8693(82)90006-0}.
\bibitem{lyndon} R.~C.~Lyndon. Cohomology theory of groups with a single defining relation. \emph{Annals of Mathematics} (2) 52:3 (1950), 650--665. DOI: \href{https://doi.org/10.2307/1969440}{10.2307/1969440}.
\bibitem{mannan} W.~H.~Mannan. The third homotopy module of a 2-complex. \emph{Bulletin of the London Mathematical Society} 40:4 (2008), 664--674. DOI: \href{https://doi.org/10.1112/blms/bdn047}{10.1112/blms/bdn047}.
\bibitem{mikhailovlong} R.~Mikhailov. A one-relator group with long lower central series. \emph{Forum Mathematicum} 28:2 (2016), 327--331. \href{https://arxiv.org/abs/1406.2550}{arXiv:1406.2550}.
\bibitem{square} A.~Mikhovich. \emph{Bousfield--Kan Completions of Subcontractible Presentations}. \href{https://arxiv.org/abs/2609.21534}{arXiv:2609.21534} (2026).
\bibitem{hecke} A.~Mikhovich. Complete group rings as Hecke algebras. \emph{Topology and its Applications} 275 (2020), 107027. DOI: \href{https://doi.org/10.1016/j.topol.2019.107027}{10.1016/j.topol.2019.107027}.
\bibitem{identity} A.~M.~Mikhovich. Identity theorem for pro-\(p\)-groups. In \emph{Knots, Low-Dimensional Topology and Applications}, Springer Proceedings in Mathematics \& Statistics 284. Springer, 2019. DOI: \href{https://doi.org/10.1007/978-3-030-16031-9_18}{10.1007/978-3-030-16031-9\_18}. \href{https://arxiv.org/abs/1703.02996v5}{arXiv:1703.02996v5}.
\bibitem{mikhovich2025} A.~M.~Mikhovich. Rational and \(p\)-adic analogues of J.~H.~C.~Whitehead's conjecture. \emph{Izvestiya: Mathematics} 89:2 (2025), 274--318. DOI: \href{https://doi.org/10.4213/im9597e}{10.4213/im9597e}.
\bibitem{quillenprofinite} D.~G.~Quillen. An application of simplicial profinite groups. \emph{Commentarii Mathematici Helvetici} 44 (1969), 45--60. DOI: \href{https://doi.org/10.1007/BF02564511}{10.1007/BF02564511}.
\bibitem{quillenrational} D.~Quillen. Rational homotopy theory. \emph{Annals of Mathematics} (2) 90 (1969), 205--295. DOI: \href{https://doi.org/10.2307/1970725}{10.2307/1970725}.
\bibitem{serre} J.-P.~Serre. Structure de certains pro-\(p\)-groupes. \emph{Séminaire Bourbaki}, 1962/63, exposé~252.
\end{thebibliography}
\end{document}